\newtheorem{theorem}{Theorem}[section]
\newtheorem{lemma}[theorem]{Lemma}
\newtheorem{proposition}[theorem]{Proposition}
\newtheorem{corollary}[theorem]{Corollary}
\theoremstyle{definition}
\newtheorem{definition}[theorem]{Definition}
\newtheorem{example}[theorem]{Example}
\newtheorem{notation}[theorem]{Notation}
\newtheorem{remark}[theorem]{Remark}
\newtheorem{claim}[theorem]{Claim}
\crefname{theorem}{Theorem}{Theorems}
\crefname{lemma}{Lemma}{Lemmas}
\crefname{proposition}{Proposition}{Propositions}
\crefname{corollary}{Corollary}{Corollaries}
\crefname{conjecture}{Conjecture}{Conjectures}
\crefname{definition}{Definition}{Definitions}
\crefname{example}{Example}{Examples}
\crefname{question}{Question}{Questions}
\crefname{remark}{Remark}{Remarks}
\crefname{claim}{Claim}{Claims}
\crefname{notation}{Notation}{Notations}
\newenvironment{proofclaim}{\paragraph{\emph{Proof of the Claim}.}}{\hfill$\qed$\\}
\newcommand{\Pries}{\mathsf{Pries}}
\newcommand{\ERS}{\mathsf{ERS}}
\newcommand{\Esa}{\mathsf{Esa}}
\newcommand{\CC}{\mathsf{CC}}
\newcommand{\GA}{\mathsf{GA}}
\newcommand{\HA}{\mathsf{HA}}
\newcommand{\DL}{\mathsf{DL}}
\newcommand{\up}{{\uparrow}}
\newcommand{\down}{{\downarrow}}
\newcommand{\V}{\mathsf{V}}
\newcommand{\Vcal}{\mathcal{V}}
\newcommand{\timesg}{\bigotimes_i}
\newcommand{\timesp}{\prod_i}
\newcommand{\upc}{{\Uparrow}}
\newcommand{\downc}{{\Downarrow}}
\newcommand{\lec}{\unlhd}
\newcommand{\nlec}{\ntrianglelefteq}
\newcommand{\upup}{{\twoheaduparrow}}
\newcommand{\bd}{{\mathsf{bd}}}
\newcommand{\two}{{\mathsf{2}}}
\newcommand{\id}{\text{id}}
\DeclareFontFamily{U}{mathx}{\hyphenchar\font45}
\DeclareFontShape{U}{mathx}{m}{n}{
      <5> <6> <7> <8> <9> <10>
      <10.95> <12> <14.4> <17.28> <20.74> <24.88>
      mathx10
      }{}
\DeclareSymbolFont{mathx}{U}{mathx}{m}{n}
\DeclareMathSymbol{\bigtimes}{1}{mathx}{"91}
\DeclareFontFamily{U} {MnSymbolA}{}
\DeclareFontShape{U}{MnSymbolA}{m}{n}{
  <-6> MnSymbolA5
  <6-7> MnSymbolA6
  <7-8> MnSymbolA7
  <8-9> MnSymbolA8
  <9-10> MnSymbolA9
  <10-12> MnSymbolA10
  <12-> MnSymbolA12}{}
\DeclareFontShape{U}{MnSymbolA}{b}{n}{
  <-6> MnSymbolA-Bold5
  <6-7> MnSymbolA-Bold6
  <7-8> MnSymbolA-Bold7
  <8-9> MnSymbolA-Bold8
  <9-10> MnSymbolA-Bold9
  <10-12> MnSymbolA-Bold10
  <12-> MnSymbolA-Bold12}{}
\DeclareSymbolFont{MnSyA} {U} {MnSymbolA}{m}{n}
\DeclareMathSymbol{\uprsquigarrow}{\mathrel}{MnSyA}{161}
\DeclareMathSymbol{\twoheaduparrow}{\mathrel}{MnSyA}{25}
\setlist[enumerate,1]{label={\upshape(\arabic*)},ref=\arabic*}
\edef\plabelformat{(\string#2\string#1\string#3)}
\edef\plabelrangeformat{(\string#3\string#1,\string#2\string#6)}
\newcommand{\plabel}[1]{\label{#1}
\immediate\write\@auxout{\noexpand\crefformat{#1}{\noexpand\cref{#1}\plabelformat}
\noexpand\crefmultiformat{#1}{\noexpand\cref{#1}\plabelformat}{,\plabelformat}{,\plabelformat}{,\plabelformat}
\noexpand\crefrangeformat{#1}{\noexpand\cref{#1}\plabelrangeformat}}}
\tikzstyle{none}=[inner sep=0pt]
\tikzstyle{black dot}=[fill=black, draw=black, shape=circle, minimum size=5pt, inner sep=0]
\tikzstyle{small black dot}=[fill=black, draw=black, shape=circle, minimum size=0.75pt, inner sep=0]
\tikzstyle{white dot}=[fill=white, draw=black, shape=circle, minimum size=5pt, inner sep=0]
\tikzstyle{none dashed}=[dashed, -]
\tikzstyle{none dotted}=[dotted, -, tikzit draw={rgb,255: red,79; green,160; blue,203}]
\tikzstyle{to}=[->]
\begin{document}

\title[Free algebras and coproducts in varieties of G\"odel algebras]{Free algebras and coproducts in varieties\\ of G\"odel algebras}

\author{L.~Carai}
\address{Dipartimento di Matematica ``Federigo Enriques'', Universit\`a degli Studi di Milano, via Cesare Saldini 50, 20133 Milano, Italy}
\email{luca.carai.uni@gmail.com}

\subjclass[2020]{06D20, 08B20, 08B25, 03B55, 06E15, 06D05}
\keywords{G\"odel algebra, free algebra, coproduct, distributive lattice, Esakia duality, Priestley duality, G\"odel-Dummett logic, bi-Heyting algebra}

\begin{abstract}
G\"odel algebras are the Heyting algebras satisfying the axiom $(x \to y) \vee (y \to x)=1$.
We utilize Priestley and Esakia dualities to dually describe free G\"odel algebras and coproducts of G\"odel algebras.
In particular, we realize the Esakia space dual to a G\"odel algebra free over a distributive lattice as the, suitably topologized and ordered, collection of all nonempty closed chains of the Priestley dual of the lattice.
This provides a tangible dual description of free G\"odel algebras without any restriction on the number of free generators, which generalizes known results for the finitely generated case.
A similar approach allows us to characterize the Esakia spaces dual to coproducts of arbitrary families of G\"odel algebras. 
We also establish analogous dual descriptions of free algebras and coproducts in every variety of G\"odel algebras.
As consequences of these results, we obtain a formula to compute the depth of coproducts of G\"odel algebras and show that all free G\"odel algebras are bi-Heyting algebras. 
\end{abstract}

\maketitle
\tableofcontents

\section{Introduction}

Free Heyting algebras play a fundamental role in the study of intuitionistic propositional logic as they are, up to isomorphism,
Lindenbaum-Tarski algebras, whose elements are equivalence classes of propositional formulas over a fixed set of variables modulo intuitionistic logical equivalence.
The notoriously intricate structure of free Heyting algebras
can be investigated using the powerful tool of Esakia duality, which establishes a dual equivalence between the category of Heyting algebras and a category of ordered topological spaces known as Esakia spaces (see, e.g., \cite{Esa19}).
Different methods to study the Esakia spaces dual to free Heyting algebras have been developed.
Universal models, introduced independently by Shehtman \cite{She78} and Bellissima \cite{Bel86}, constitute the upper part of the Esakia duals of finitely generated free Heyting algebras. The coloring technique due to Esakia and Grigolia \cite{EG77} is one of the main tools to construct universal models (see, e.g., \cite[Sec.~3]{Bez06}).
A different approach, developed by Ghilardi \cite{Ghi92} generalizing some results of Urquhart~\cite{Urq73},
builds the Esakia duals of finitely generated free Heyting algebras as the inverse limits of systems of finite posets (see also \cite{BG11}). This is known as the step-by-step method and it has recently been generalized 
beyond the finitely generated setting by Almeida \cite{Alm24}.

Due to the complexity of free Heyting algebras, it is natural to restrict the attention to free algebras in smaller varieties of Heyting algebras.
A particularly well-behaved variety of Heyting algebras is the variety $\GA$ of G\"odel algebras, which are the Heyting algebras satisfying the prelinearity axiom $(x \to y) \vee (y \to x)=1$.  The variety $\GA$ provides the algebraic semantics for the intermediate propositional logic known as the G\"odel-Dummett logic or linear calculus, introduced by Dummett~\cite{Dum59} and often denoted by $\mathsf{LC}$. The G\"odel-Dummett logic can also be thought of as a propositional fuzzy logic (see, e.g., \cite{BP11} and \cite[Sec.~4.2]{Haj98}). 

Free G\"odel algebras were first studied by Horn \cite{Hor69a}, who proved that $\GA$ is locally finite, meaning that finitely generated free G\"odel algebras are finite. Grigolia \cite{Gri87} described the Esakia duals of finitely generated free G\"odel algebras, while Aguzzoli, Gerla, and Marra \cite{AGM08} described the Esakia duals of G\"odel algebras free over finite distributive lattices. A G\"odel algebra $G$ is said to be free over a distributive lattice $L$ via a lattice homomorphism $e \colon L \to G$ when the following holds: for every G\"odel algebra $H$ and lattice homomorphism $f \colon L \to H$, there is a unique Heyting algebra homomorphism $g \colon G \to H$ such that $g \circ e = f$. 
\[
\begin{tikzcd}[sep = large]
G \arrow[r, dashed, "\exists ! \,g"] & H \\
L \arrow[ur, "f"'] \arrow[u, "e"] & 
\end{tikzcd}
\]  
By Priestley duality, the category of distributive lattices is dually equivalent to the category of the ordered topological spaces known as Priestley spaces (see, e.g., \cite{GvG24}).
The main result of \cref{sec:free Godel} generalizes the results of \cite{AGM08} from the finite to the infinite setting, by providing a dual description of G\"odel algebras free over distributive lattices, without any restriction on the cardinality of the lattice.
We show that the Esakia dual of the G\"odel algebra free over a distributive lattice $L$ is isomorphic to the Esakia space whose points are all the nonempty closed chains (i.e., nonempty totally ordered closed subsets) of the Priestley space dual to $L$.
As a consequence, we obtain that the G\"odel algebra free over a set $S$ is dual to the Esakia space of all nonempty closed chains of $\two^S$, where $\two$ is the $2$-element chain with the discrete topology.
This result is noteworthy because it offers a tangible dual description of free G\"odel algebras, contrasting with the sparsity of concrete descriptions available for the duals of free algebras in varieties of Heyting algebras, especially in the infinitely generated setting.

Coproducts of Heyting algebras, and hence products in the category of Esakia spaces, are notoriously difficult to describe. A generalization of the construction of universal models was employed by Grigolia \cite{Gri06} to study the upper part of the Esakia duals of binary coproducts of finite Heyting algebras. An algorithm to compute binary coproducts of finite G\"odel algebras was presented by D'Antona and Marra in \cite{DM06}. The step-by-step method has been employed in \cite[Sec.~4.2]{Alm24} to obtain a dual description of binary coproducts of Heyting algebras. In \cref{sec:coproducts}, we utilize the machinery developed in \cref{sec:free Godel} to obtain a dual description of coproducts of any family of G\"odel algebras. 
We prove that the Esakia dual of a coproduct is realized as a particular collection of nonempty closed chains of the cartesian product of the Esakia duals of the factors. Notably, this result does not require any restrictions on the cardinalities of the family and of its members.

Dunn and Meyer~\cite{DM71} and Hecht and Katri\v{n}\'{a}k~~\cite{HK72} showed that there are countably many proper subvarieties (i.e., equationally definable subclasses) of $\GA$, and each of them is axiomatized over $\GA$ by the bounded depth axiom $\bd_n = 1$ for some $n \in \mathbb{N}$, where $\bd_n$ is the $n$-ary term defined recursively as follows:
\begin{align*}
\bd_0 & \coloneqq 0,\\
\bd_{n} & \coloneqq x_{n} \vee (x_{n} \to \bd_{n-1}).
\end{align*}
We denote by $\GA_n$ the subvariety of $\GA$ consisting of all the G\"odel algebras validating $\bd_n = 1$, and we refer to its members as $\GA_n$-algebras. 
In particular, $\GA_0$ contains only trivial algebras and $\GA_1$ coincides with the variety of boolean algebras. Since $\GA_n \subseteq \GA_m$ iff $n \le m$, the subvarieties of $\GA$ form a countable chain of order type $\omega+1$. This description of the subvarieties of $\GA$ is the algebraic counterpart of Hosoi's characterization of the extensions of the G\"odel-Dummett logic (see~\cite{Hos67}).
In \cref{sec:free Godel,sec:coproducts} we also adapt the dual descriptions of free G\"odel algebras and coproducts of G\"odel algebras mentioned above to obtain dual descriptions of free $\GA_n$-algebras over distributive lattices and of coproducts in $\GA_n$. 
The concreteness of the dual description of coproducts allows us to obtain in \cref{sec:coproducts} a formula to calculate the depth of a coproduct in $\GA$ from the depths of its factors.

It is shown in \cite{Ghi92} that the step-by-step method allows to conclude that every Heyting algebra free over a finite distributive lattice is a bi-Heyting algebra, where a Heyting algebra is called a bi-Heyting algebra if its order dual is also a Heyting algebra. The G\"odel algebras that are bi-Heyting algebras are also known as bi-G\"odel algebras and have been studied in \cite{BMM22}.
In \cref{sec:bi-Heyting} we show that a G\"odel algebra free over a distributive lattice $L$ is a bi-Heyting algebra iff the order dual of $L$ is a Heyting algebra.  As a consequence, we deduce that free G\"odel algebras are always bi-Heyting algebras. Surprisingly, the situation is very different for free $\GA_n$-algebras. In fact, we prove that free $\GA_n$-algebras are never bi-Heyting algebras, except when they are finitely generated. 

We end the paper with \cref{sec:comparison stepbystep} in which we compare our approach with the description of free G\"odel algebras provided by the step-by-step method.
We also investigate the dual description of some particular sublattices of free G\"odel algebras that play an important role in the step-by-step construction.

\section{Preliminaries on Priestley and Esakia dualities}\label{sec:prelim Priestley Esakia}

In this section we recall the basics of Priestley duality for distributive lattices and of Esakia duality for Heyting algebras. We also describe how Esakia duality restricts to varieties of G\"odel algebras. For more details see, e.g., \cite{GvG24,Esa19}. 

If $X$ is a poset and $A \subseteq X$, we let
\[
\up A \coloneqq \{x \in X \mid y \le x \text{ for some } y \in A \} \qquad \text{and} \qquad \down A \coloneqq \{x \in X \mid x \le y \text{ for some } y \in A \}.
\]
When $A=\{x\}$, we simply write $\up x$ and $\down x$. We call $A$ an \emph{upset} when $A=\up A$ and a \emph{downset} when $A=\down A$.

\begin{definition}
A \emph{Priestley space} is a compact space $X$ equipped with a partial order $\le$ satisfying the \emph{Priestley separation axiom}: if $x,y \in X$ with $x \nleq y$, then there is a clopen upset $U$ such that $x \in U$ and $y \notin U$.
\end{definition}

Each Priestley space is a Stone space (compact, Hausdorff, and zero-dimensional space),
and the topology on finite Priestley spaces is always discrete. So, finite Priestley spaces can be identified with finite posets. We denote by $\Pries$ the category of Priestley spaces and continuous order-preserving maps. Throughout the paper, we will assume that all distributive lattices are bounded and all lattice homomorphisms preserve the bounds. Let $\DL$ be the category of distributive lattices and lattice homomorphisms.
If $X$ is a Priestley space, then the set $X^*$ of clopen upsets of $X$ ordered by inclusion forms a distributive lattice. We then have a contravariant functor $(-)^* \colon \Pries \to \DL$ which sends $X$ to $X^*$ and a $\Pries$-morphism $f \colon X \to Y$ to the $\DL$-morphism $f^{-1} \colon Y^* \to X^*$.
If $L$ is a distributive lattice, we denote by $L_*$ the set of all prime filters of $L$ ordered by inclusion and equipped with the topology generated by the subbasis $\{ \sigma_L(a), \ L_* \setminus \sigma_L(a) \mid a \in L\}$, where $\sigma_L(a)= \{ P \in L_* \mid a \in P\}$. It turns out that $L_*$ is a Priestley space and there is a contravariant functor $(-)_* \colon \DL \to \Pries$ which maps $L$ to $L_*$ and a $\DL$-morphism $\alpha \colon L \to M$ to the $\Pries$-morphism $\alpha^{-1} \colon M_* \to L_*$. These two functors are quasi-inverses of each other and establish Priestley duality. 

\begin{theorem}[Priestley duality]
$\Pries$ is dually equivalent to $\DL$.
\end{theorem}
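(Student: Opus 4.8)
The plan is to show that the contravariant functors $(-)^* \colon \Pries \to \DL$ and $(-)_* \colon \DL \to \Pries$ described above are quasi-inverse, by producing natural isomorphisms $\sigma \colon \id_{\DL} \Rightarrow ((-)_*)^*$ and $\varepsilon \colon \id_{\Pries} \Rightarrow ((-)^*)_*$. We take the functoriality of $(-)^*$ and $(-)_*$ (including the facts that $L_*$ is a Priestley space and $X^*$ a distributive lattice) as already established in the discussion preceding the statement, so the work is entirely in the two families of maps and their naturality.

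For a distributive lattice $L$, the candidate isomorphism is the map $\sigma_L \colon L \to (L_*)^*$, $a \mapsto \sigma_L(a)$, already introduced above; note $\sigma_L(a)$ is a clopen upset of $L_*$ since it is clopen by construction of the topology and prime filters are upward closed. That $\sigma_L$ preserves $\wedge$, $\vee$, $0$, and $1$ is immediate from the definition of a prime filter. Injectivity is the only place the prime filter theorem enters: if $a \nleq b$, then the filter generated by $a$ is disjoint from the ideal generated by $b$, so Zorn's lemma yields a prime filter $P$ with $a \in P$ and $b \notin P$, whence $\sigma_L(a) \neq \sigma_L(b)$. Surjectivity onto clopen upsets is the substantive point, and I would prove it by a double compactness argument: given a clopen upset $U$, put $F \coloneqq L_* \setminus U$, a closed (indeed clopen) downset. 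For $P \in U$ and $Q \in F$ we have $P \nsubseteq Q$ (otherwise $Q \in \up P \subseteq U$), so by definition of the order there is $a_{P,Q} \in P \setminus Q$. Fixing $P$, the sets $L_* \setminus \sigma_L(a_{P,Q})$ cover the compact set $F$; taking a finite subcover and meeting the corresponding elements produces $a_P \in L$ with $P \in \sigma_L(a_P)$ and $\sigma_L(a_P) \cap F = \emptyset$. Now the sets $\sigma_L(a_P)$ cover the compact set $U$; a finite subcover and the join of the corresponding elements yields $a \in L$ with $U \subseteq \sigma_L(a)$ and $\sigma_L(a) \cap F = \emptyset$, i.e., $U = \sigma_L(a)$.

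For a Priestley space $X$, the candidate isomorphism is $\varepsilon_X \colon X \to (X^*)_*$ sending $x$ to $\{U \in X^* \mid x \in U\}$, the set of clopen upsets of $X$ containing $x$; one checks readily that this is a prime filter of the lattice $X^*$. The map $\varepsilon_X$ is order-preserving (clear) and continuous, since $\varepsilon_X^{-1}(\sigma_{X^*}(U)) = U$ is clopen for each $U \in X^*$ and these sets together with their complements form a subbasis of $(X^*)_*$. Injectivity and order-reflection come directly from the Priestley separation axiom: if $x \nleq y$, a clopen upset separates them, so $\varepsilon_X(x) \nsubseteq \varepsilon_X(y)$. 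Surjectivity is again a compactness argument: given a prime filter $\mathcal{F}$ of $X^*$, the family $\mathcal{F} \cup \{X \setminus U \mid U \in X^* \setminus \mathcal{F}\}$ consists of closed sets and has the finite intersection property — if a finite intersection of members of $\mathcal{F}$ were contained in a finite union $U_1 \cup \dots \cup U_n$ with no $U_j \in \mathcal{F}$, then that union, being a clopen upset above a member of $\mathcal{F}$, would lie in $\mathcal{F}$, contradicting primeness — so by compactness there is $x$ in the intersection, and $\varepsilon_X(x) = \mathcal{F}$. Being a continuous order-preserving bijection between Priestley spaces whose inverse is order-preserving (by the same separation argument), $\varepsilon_X$ is automatically a homeomorphism (a continuous bijection of compact Hausdorff spaces) and hence a $\Pries$-isomorphism.

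It remains to check naturality of $\sigma$ and $\varepsilon$, which is a routine diagram chase: for $\alpha \colon L \to M$ in $\DL$ one verifies $(\alpha^{-1})^{-1} \circ \sigma_L = \sigma_M \circ \alpha$ by evaluating both sides on $a \in L$ and unwinding the definitions, and dually for $\varepsilon$ on a $\Pries$-morphism. Together with the isomorphisms $\sigma_L$ and $\varepsilon_X$, this establishes that $(-)^*$ and $(-)_*$ form a dual equivalence between $\Pries$ and $\DL$. The main obstacle throughout is the pair of compactness arguments identifying clopen upsets of $L_*$ with the sets $\sigma_L(a)$ and points of $X$ with prime filters of $X^*$; everything else is bookkeeping or an appeal to the prime filter theorem.
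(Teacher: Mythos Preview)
Your proof sketch is correct and follows the standard textbook argument for Priestley duality. However, the paper does not actually prove this theorem: it is stated in the preliminaries section as a known background result (with a reference to \cite{GvG24}), and the paper simply records the natural isomorphisms $\sigma$ and $\varepsilon$ in the paragraph following the statement without verifying them. So there is no ``paper's own proof'' to compare against; you have supplied one where the paper deliberately does not.
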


On the one hand, the map $\sigma_L \colon L \to (L_*)^*$ is an isomorphism for every $L \in \DL$, and yields a natural isomorphism $\sigma \colon \id_{\DL} \to (-)^* \circ (-)_*$. On the other hand, for every $X \in \Pries$ the map $\varepsilon_X \colon X \to (X^*)_*$ sending $x$ to $\{U \in X^* \mid x \in U\}$ is an isomorphism of Priestley spaces and yields a natural isomorphism $\varepsilon \colon \id_{\Pries} \to (-)_* \circ (-)^*$.

Recall that a distributive lattice $H$ is called a \emph{Heyting algebra} when it is equipped with a binary operation $\to$ called \emph{implication} such that $a \wedge b \le c$ iff $a \le b \to c$ for every $a,b,c \in H$. 
Let $\HA$ be the category of Heyting algebras and Heyting homomorphisms; that is, lattice homomorphisms preserving implications.
We now turn our attention to Esakia duality for Heyting algebras.

\begin{definition}
A Priestley space $X$ is called an \emph{Esakia space} when $\down V$ is clopen for every clopen subset $V$ of $X$.
\end{definition}

A map $f \colon X \to Y$ between posets is called a \emph{p-morphism} if $f[\up x]=\up f(x)$ for every $x \in X$. Equivalently, a p-morphism is an order-preserving map such that $f(x) \le y$ implies the existence of $z \ge x$ such that $f(z)=y$. Esakia spaces and continuous p-morphisms form a category that we denote by $\Esa$. To help the reader, we will usually denote Priestley spaces with the letter $X$ and Esakia spaces with $Y$. 
If $Y$ is an Esakia space, then $Y^*$ forms a Heyting algebra with implication given by $U \to V = Y \setminus \down (U \setminus V)$ for every $U,V \in Y^*$.
In fact, the functors $(-)^*$ and $(-)_*$ of Priestley duality restrict to $\HA$ and $\Esa$ and yield Esakia duality.

\begin{theorem}[Esakia duality]
$\Esa$ is dually equivalent to $\HA$.
\end{theorem}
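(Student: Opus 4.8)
The plan is to show that the Priestley-duality functors $(-)^{*}$ and $(-)_{*}$, together with the natural isomorphisms $\sigma$ and $\varepsilon$, restrict to a dual equivalence between $\HA$ and $\Esa$. The crux is the object-level lemma: \emph{for a Priestley space $X$, the lattice $X^{*}$ admits a Heyting implication if and only if $X$ is an Esakia space, and in that case the implication is forced to be $U \to V = X \setminus \down(U \setminus V)$.} For the ``if'' direction one sets $U \to V \coloneqq X \setminus \down(U \setminus V)$; since $U \setminus V$ is clopen, the Esakia condition makes $\down(U \setminus V)$ clopen, so $U \to V$ is a clopen upset, and a short computation using that clopen upsets are upward closed verifies the residuation law $W \cap U \subseteq V$ iff $W \subseteq U \to V$ for clopen upsets $W,U,V$. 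This simultaneously justifies the formula for $\to$ quoted in the excerpt and shows that $Y^{*}$ is a Heyting algebra whenever $Y$ is an Esakia space.

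For the ``only if'' direction, assume $X^{*}$ is a Heyting algebra. Given clopen upsets $V \subseteq U$, I would identify the abstract implication $U \to V$ computed in $X^{*}$ with $X \setminus \down(U \setminus V)$: one inclusion is residuation again, and for the other I use that $\up x$ is the intersection of all clopen upsets containing $x$, so if $\up x \cap (U \setminus V) = \emptyset$ then, $U \setminus V$ being compact, already some clopen upset $W \ni x$ misses $U \setminus V$, whence $W \subseteq U \to V$ and $x \in U \to V$. Thus $\down(U \setminus V)$ is clopen for clopen upsets $V \subseteq U$; to obtain the full Esakia condition one writes an arbitrary clopen set as a finite union of sets $U' \setminus W'$ with $U', W'$ clopen upsets (a standard consequence of Priestley separation and compactness) and uses that $\down(-)$ commutes with finite unions. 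Applying this to $X = H_{*}$, for which $X^{*} \cong H$ is Heyting, shows that $H_{*}$ is an Esakia space for every $H \in \HA$.

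Next I would establish the morphism correspondence: a lattice homomorphism $\alpha \colon H \to K$ between Heyting algebras preserves $\to$ if and only if its Priestley dual $\varphi \coloneqq \alpha^{-1} \colon K_{*} \to H_{*}$ is a p-morphism. For ``$\to$-preserving $\Rightarrow$ p-morphism'', given prime filters $Q$ of $K$ and $P$ of $H$ with $\varphi(Q) \subseteq P$, one checks that the filter generated by $Q \cup \alpha[P]$ and the ideal generated by $\alpha[H \setminus P]$ are disjoint — the key step being that $q \wedge \alpha(p) \le \alpha(a)$ forces $q \le \alpha(p) \to \alpha(a) = \alpha(p \to a)$ and hence $p \to a \in \varphi(Q) \subseteq P$, the only place where the hypothesis on $\alpha$ is used — and then any prime filter $Q'$ separating them satisfies $Q \subseteq Q'$ and $\varphi(Q') = P$. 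For the converse, transport the Heyting structures through $\sigma_{H}$ and $\sigma_{K}$: using the formula $U \to V = Y \setminus \down(U \setminus V)$ for an Esakia space $Y$, preservation of $\to$ by $\alpha$ reduces to the identity $\varphi^{-1}(\down A) = \down \varphi^{-1}(A)$ for clopen $A$, and this holds whenever $\varphi$ is a p-morphism (the inclusion $\supseteq$ uses only monotonicity, while $\subseteq$ uses the lifting property of p-morphisms).

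Finally I would assemble the pieces. By the above, $(-)^{*}$ and $(-)_{*}$ restrict to contravariant functors $\Esa \to \HA$ and $\HA \to \Esa$. The Priestley natural isomorphism $\sigma_{H} \colon H \to (H_{*})^{*}$ is a lattice isomorphism between Heyting algebras, hence automatically a Heyting isomorphism, since implication is uniquely determined as the residuum of meet; likewise $\varepsilon_{Y} \colon Y \to (Y^{*})_{*}$ is a Priestley isomorphism between Esakia spaces, hence an order isomorphism, and an order isomorphism is a p-morphism with p-morphism inverse. Naturality is inherited from Priestley duality, giving the dual equivalence between $\Esa$ and $\HA$. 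I expect the main obstacle to be the ``only if'' half of the object-level lemma — the compactness argument identifying $X \setminus \down(U \setminus V)$ with the abstract implication, and the reduction of an arbitrary clopen set to basic ones — together with the filter--ideal separation underlying the forward morphism implication.
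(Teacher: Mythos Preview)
Your proof is correct and follows the standard route to Esakia duality; note, however, that the paper does not prove this theorem at all. It is stated as background material, with the one-line remark that ``the functors $(-)^*$ and $(-)_*$ of Priestley duality restrict to $\HA$ and $\Esa$ and yield Esakia duality'' and a reference to \cite{Esa19}. So there is no paper proof to compare against: you have supplied a full argument where the paper simply cites the literature. Your object-level characterization (Esakia iff $X^*$ is Heyting, with the forced formula for $\to$), the filter--ideal separation for the morphism correspondence, and the assembly via the restricted natural isomorphisms are exactly the ingredients one finds in standard treatments such as Esakia's monograph.
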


As mentioned in the introduction, a Heyting algebra $G$ is called a \emph{G\"odel algebra} if the identity $(a \to b) \vee (b \to a)=1$ holds for every $a,b \in G$, where $1$ denotes the greatest element of $G$. We think of the variety $\GA$ of G\"odel algebras as a full subcategory of $\HA$ and consider the restriction of Esakia duality to $\GA$. Recall that a totally ordered subset of a poset $X$ is called a \emph{chain} of $X$, and $X$ is a \emph{root system} if $\up x$ is a chain for every $x \in X$. The following is a consequence of \cite[Thm.~2.4]{Hor69a}.

\begin{theorem}
Let $Y$ be an Esakia space. Then $Y^*$ is a G\"odel algebra iff $Y$ is a root system.
\end{theorem}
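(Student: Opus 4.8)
The plan is to prove the equivalence in both directions by translating the relevant algebraic conditions into the language of the dual Esakia space, using the correspondence between prime filters of $Y^*$ and points of $Y$ afforded by Esakia duality. Throughout, I would identify $Y$ with $(Y^*)_*$ via the isomorphism $\varepsilon_Y$, so that points of $Y$ correspond to prime filters of the Heyting algebra $Y^*$, and $x \le y$ in $Y$ corresponds to inclusion of the associated prime filters.

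For the forward direction, assume $Y^*$ is a G\"odel algebra; I want to show $\up x$ is a chain for every $x \in Y$, i.e. that any two points $y, z \ge x$ are comparable. Suppose not: then $y \nleq z$ and $z \nleq y$, so by the Priestley separation axiom there are clopen upsets $U, V \in Y^*$ with $y \in U$, $z \notin U$ and $z \in V$, $y \notin V$. The prelinearity identity gives $(U \to V) \vee (V \to U) = Y$, so $x$ lies in the clopen upset $U \to V$ or in $V \to U$; say $x \in U \to V = Y \setminus \down(U \setminus V)$. Since $y \ge x$ and $\up x \subseteq U \to V$ (as it is an upset), we get $y \in U \to V$, but $y \in U \setminus V$, so $y \in \down(U\setminus V)$, a contradiction. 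The symmetric case is identical. Hence $\up x$ is totally ordered and $Y$ is a root system.

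For the converse, assume $Y$ is a root system and let $U, V \in Y^*$; I must show $(U \to V) \cup (V \to U) = Y$, i.e. every $x \in Y$ lies in $Y \setminus \down(U\setminus V)$ or in $Y \setminus \down(V \setminus U)$. Equivalently, no $x$ can lie simultaneously in $\down(U \setminus V)$ and $\down(V \setminus U)$. If it did, there would be $y \ge x$ with $y \in U \setminus V$ and $z \ge x$ with $z \in V \setminus U$. Since $\up x$ is a chain, $y$ and $z$ are comparable, say $y \le z$; then $y \in U$ and $U$ is an upset force $z \in U$, contradicting $z \notin U$. The other case $z \le y$ is symmetric. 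This establishes prelinearity in $Y^*$, so $Y^*$ is a G\"odel algebra.

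I do not anticipate a genuine obstacle here: once the Esakia-space formula $U \to V = Y \setminus \down(U \setminus V)$ is in hand, both directions reduce to the elementary observation that an upset containing one of two comparable points contains the larger one, combined with Priestley separation to produce the needed clopen upsets in the forward direction. The only point requiring a little care is to phrase the argument at the level of the concrete Esakia space $Y$ rather than invoking \cite[Thm.~2.4]{Hor69a} directly; since the statement is billed as a consequence of that theorem, I would include a remark indicating that the root-system condition is the spatial reformulation of Horn's description of G\"odel algebras via the dual correspondence between prime filters and points.
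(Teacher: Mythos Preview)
Your proof is correct. The paper does not actually prove this theorem: it simply states it as a consequence of \cite[Thm.~2.4]{Hor69a} and moves on. Your argument is the standard direct translation via Esakia duality, and both directions are fine as written; the only minor redundancy is the opening remark about identifying $Y$ with $(Y^*)_*$, which you never really use since you work entirely with clopen upsets of $Y$ and the formula $U \to V = Y \setminus \down(U \setminus V)$.
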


We call an Esakia space that is a root system an \emph{Esakia root system}. Let $\ERS$ be the full subcategory of $\Esa$ consisting of Esakia root systems and continuous p-morphisms. It is a straightforward consequence of the previous theorem that Esakia duality restricts to a duality for G\"odel algebras.

\begin{theorem}[Esakia duality for G\"odel algebras]
$\ERS$ is dually equivalent to $\GA$.
\end{theorem}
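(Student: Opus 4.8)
The plan is to deduce this from Esakia duality together with the characterization of root systems proved just above, via the general principle that a dual equivalence restricts to full subcategories as soon as both quasi-inverse functors carry objects of the one subcategory into objects of the other. Since $\ERS$ and $\GA$ are by definition full subcategories of $\Esa$ and $\HA$ respectively, and the contravariant functors $(-)^*$ and $(-)_*$ of Esakia duality are quasi-inverse to one another through the natural isomorphisms $\varepsilon$ and $\sigma$, it suffices to verify two object-level closure conditions: that $Y^*$ is a G\"odel algebra whenever $Y$ is an Esakia root system, and that $G_*$ is an Esakia root system whenever $G$ is a G\"odel algebra.

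The first condition is immediate from the preceding theorem: if $Y \in \ERS$, then $Y$ is a root system, hence $Y^*$ is a G\"odel algebra, i.e.\ $Y^* \in \GA$. For the second, let $G \in \GA$. By Esakia duality $G_*$ is an Esakia space and $\sigma_G \colon G \to (G_*)^*$ is an isomorphism of Heyting algebras, so $(G_*)^*$ is a G\"odel algebra, being isomorphic to $G$. Applying the preceding theorem in the converse direction, $G_*$ is a root system, and therefore $G_* \in \ERS$.

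It now follows formally that $(-)^*$ and $(-)_*$ restrict to contravariant functors between $\ERS$ and $\GA$: the action on morphisms is unchanged because both subcategories are full, and the components of the natural isomorphisms $\varepsilon$ and $\sigma$ at objects of $\ERS$ and $\GA$ again lie in these subcategories, so the restrictions of $\varepsilon$ and $\sigma$ witness that the restricted functors remain quasi-inverse. Hence $\ERS$ is dually equivalent to $\GA$. I do not expect a real obstacle here; the only point needing a modicum of care is this bookkeeping that the restriction is well defined and still an equivalence, and the two directions of the root-system theorem supply exactly the closure conditions that make the standard argument go through.
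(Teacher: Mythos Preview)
Your argument is correct and is precisely the ``straightforward consequence'' the paper alludes to; the paper does not give a proof beyond that remark, and your proposal spells out exactly the intended restriction-of-equivalence argument using both directions of the preceding root-system characterization.
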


We end the section with some considerations on the restrictions of Esakia duality to subvarieties of $\GA$.
We first recall the notion of \emph{depth} of an Esakia space, specialized to the setting of Esakia root systems.

\begin{definition}
Let $Y$ be an Esakia root system and $y \in Y$. If $\up y$ is finite, then we denote by $d(y)$ the cardinality of $\up y$. Otherwise, we write $d(y)=\infty$. We call $d(y)$ the \emph{depth} of $y$. We write $d(Y)$ for the supremum of the depths of elements of $Y$, and call $d(Y)$ the depth of $Y$.
\end{definition}

For every $n \in \mathbb{N}$, let $\ERS_n$ be the full subcategory of $\ERS$ consisting of the Esakia root systems of depth smaller or equal to $n$. It follows from well-known facts (see, e.g., \cite[Sec.~2.2]{BBMS21} and the references therein) that a G\"odel algebra $G$ is in $\GA_n$ iff $G_* \in \ERS_n$. We then have the following restriction of Esakia duality.

\begin{theorem}[Esakia duality for $\GA_n$]\label{thm:Esakia duality GAn}
$\ERS_n$ is dually equivalent to $\GA_n$.
\end{theorem}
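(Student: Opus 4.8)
The plan is to obtain the theorem as a direct restriction of Esakia duality for G\"odel algebras (already established above) to the full subcategories $\ERS_n \subseteq \ERS$ and $\GA_n \subseteq \GA$. Since $\ERS_n$ and $\GA_n$ are \emph{full} subcategories, once we know that the contravariant functors $(-)^*$ and $(-)_*$ send objects of one into objects of the other, the restricted functors are automatically well defined on morphisms, and the restrictions of the natural isomorphisms $\sigma$ and $\varepsilon$ will witness the dual equivalence.

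First I would record the object-level claims. That $G \in \GA_n$ implies $G_* \in \ERS_n$ is precisely the fact cited just before the statement, so $(-)_*$ restricts to a functor $\GA_n \to \ERS_n$. For the other direction, let $Y \in \ERS_n$. By Esakia duality for G\"odel algebras $Y^* \in \GA$, and by the same cited fact $Y^* \in \GA_n$ iff $(Y^*)_* \in \ERS_n$. Now $\varepsilon_Y \colon Y \to (Y^*)_*$ is an isomorphism of Esakia spaces, hence in particular an order isomorphism; since the depth $d(-)$ of an Esakia root system is defined purely in terms of the order (cardinalities of principal upsets), it is invariant under order isomorphism, so $d((Y^*)_*) = d(Y) \le n$ and therefore $(Y^*)_* \in \ERS_n$. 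Hence $Y^* \in \GA_n$, and $(-)^*$ restricts to a functor $\ERS_n \to \GA_n$.

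Finally I would assemble the equivalence. For $Y \in \ERS_n$ the component $\varepsilon_Y$ is an isomorphism in $\Esa$ between two objects of $\ERS_n$, hence an isomorphism in the full subcategory $\ERS_n$, and its naturality in $Y$ is inherited from the naturality of $\varepsilon$ over $\Esa$. Likewise $\sigma_G$ for $G \in \GA_n$ is an isomorphism in $\GA_n$, natural in $G$. Thus the restricted functors $(-)^* \colon \ERS_n \to \GA_n$ and $(-)_* \colon \GA_n \to \ERS_n$ are quasi-inverse, which proves the dual equivalence. The only point requiring any care is the invariance of depth under isomorphism of Esakia root systems, which is immediate from the definition, so I do not anticipate a genuine obstacle: all the substance has already been absorbed into the previously cited characterization $G \in \GA_n \iff G_* \in \ERS_n$.
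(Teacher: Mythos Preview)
Your proposal is correct and matches the paper's approach: the paper does not give a detailed proof of this theorem but simply states it as a restriction of Esakia duality, deduced from the previously cited characterization $G \in \GA_n \iff G_* \in \ERS_n$. Your write-up just makes explicit the routine categorical bookkeeping (fullness of the subcategories, invariance of depth under isomorphism, restriction of the natural isomorphisms $\sigma$ and $\varepsilon$) that the paper leaves implicit.
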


Since $\GA_n$ is a subvariety of $\GA$, the inclusion functor $\GA_n \hookrightarrow \GA$ has a left adjoint (see, e.g., \cite[Sec.~V.6]{Mac71}).
It then follows from Esakia duality that the inclusion $\ERS_n \hookrightarrow \ERS$ has a right adjoint. We provide a description of such right adjoint. If $Y \in \ERS$, consider the set $Y_n=\{ y \in Y \mid d(y) \le n\}$ with the subspace topology and order induced by $Y$. If $f \colon Y \to Z$ is an $\ERS$-morphism, let $f_n \colon Y_n \to Z_n$ be the restriction of $f$.

\begin{theorem}\label{prop:ERSn to ER right adjoint}
$(-)_n \colon \ERS \to \ERS_n$ is a well-defined functor that is right adjoint to the inclusion $\ERS_n \hookrightarrow \ERS$.
\end{theorem}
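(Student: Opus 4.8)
The plan is to check directly that $(-)_n$ is a well-defined functor and then to realize it as the right adjoint by producing, for each $Y\in\ERS$, an appropriate universal arrow. The only genuinely nontrivial ingredient I would need is the well-known fact that $Y_n=\{y\in Y\mid d(y)\le n\}$ is closed in $Y$ (see, e.g., \cite[Sec.~2.2]{BBMS21} and the references therein, or \cite{Esa19}). Beyond that: $Y_n$ is an upset, since $d(y)\le n$ and $y\le y'$ give $\up y'\subseteq\up y$ and hence $d(y')\le n$; so $Y_n$ is a closed upset of $Y$. Now closed upsets of Esakia spaces are again Esakia spaces in the subspace topology and induced order --- for clopen $V\subseteq Y_n$, choosing clopen $W\subseteq Y$ with $W\cap Y_n=V$, the Esakia condition on $Y$ together with $Y_n$ being an upset yields $\down_{Y_n}V=(\down_Y W)\cap Y_n$, which is clopen in $Y_n$ --- and a subset of a root system is a root system, so $Y_n\in\ERS$. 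The observation I would isolate and use repeatedly is that for any upset $C$ of $Y$ and any $x\in C$ the set $\up x$ computed inside $C$ coincides with $\up x$ computed in $Y$ (because $\up_Y x\subseteq C$); applied to $C=Y_n$ this shows that each $y\in Y_n$ has the same depth in $Y_n$ as in $Y$, so $d(Y_n)\le n$ and in fact $Y_n\in\ERS_n$. Dually, $W_n=W$ whenever $W\in\ERS_n$.

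Next I would check that $(-)_n$ is well-defined and functorial on morphisms. If $f\colon Y\to Z$ is an $\ERS$-morphism and $y\in Y_n$, then $\up f(y)=f[\up y]$ since $f$ is a p-morphism, so the depth of $f(y)$ in $Z$ equals $|f[\up y]|\le|\up y|\le n$; hence $f(Y_n)\subseteq Z_n$ and $f$ restricts to $f_n\colon Y_n\to Z_n$. This $f_n$ is continuous (a restriction and corestriction of the continuous map $f$ between the subspaces $Y_n\subseteq Y$ and $Z_n\subseteq Z$), and it is a p-morphism: using the observation above for the upsets $Y_n$ and $Z_n$, $f_n[\up y]=f[\up y]=\up f(y)=\up f_n(y)$, with every $\up$ taken in the appropriate subspace. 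So $f_n$ is an $\ERS_n$-morphism, and since $\id_n=\id$ and $(g\circ f)_n=g_n\circ f_n$ are immediate, $(-)_n$ is a functor.

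For the adjunction, I would show that for each $Y\in\ERS$ the inclusion $\iota_Y\colon Y_n\hookrightarrow Y$ --- a continuous p-morphism, since $\iota_Y[\up y]=\up y=\up\iota_Y(y)$ by the same observation --- has the following universal property: for every $W\in\ERS_n$ and every $\ERS$-morphism $g\colon W\to Y$ there is a unique $\ERS_n$-morphism $\bar g\colon W\to Y_n$ with $\iota_Y\circ\bar g=g$. Indeed, the cardinality computation of the previous paragraph (now with $d(W)\le n$, as $W\in\ERS_n$) gives $g(W)\subseteq Y_n$, so $g$ corestricts to a map $\bar g\colon W\to Y_n$ with $\iota_Y\circ\bar g=g$; this $\bar g$ is continuous and a p-morphism exactly as for $f_n$, hence an $\ERS_n$-morphism, and it is unique because $\iota_Y$ is injective. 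This universal property says precisely that $(-)_n$ is right adjoint to $\ERS_n\hookrightarrow\ERS$, with counit $\iota$ and with the action on morphisms being $f\mapsto f_n$ (as $f_n$ is the unique morphism satisfying $\iota_Z\circ f_n=f\circ\iota_Y$); the naturality of the resulting bijections $\ERS(W,Y)\cong\ERS_n(W,Y_n)$, $g\mapsto\bar g$, is a routine diagram chase using the uniqueness clause. I do not expect any real obstacle beyond correctly quoting the two standard facts (that $Y_n$ is closed, and that closed upsets of Esakia spaces are Esakia spaces); everything else is bookkeeping that the single remark on $\up$'s in upsets keeps transparent, since it guarantees that depths and the p-morphism condition are computed identically in $Y$, in $Y_n$, and along $\iota_Y$.
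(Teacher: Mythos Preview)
Your proof is correct and follows essentially the same route as the paper: show $Y_n$ is a closed upset (hence an Esakia root system in $\ERS_n$), verify that the inclusion $\iota_Y\colon Y_n\hookrightarrow Y$ is an $\ERS$-morphism with the expected universal property, and conclude the adjunction. The only cosmetic difference is that the paper invokes \cite[Thm.~IV.1.2]{Mac71} to obtain functoriality of $(-)_n$ directly from the universal arrows, whereas you verify functoriality by hand before establishing the universal property; both are equally valid ways to package the same argument.
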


\begin{proof}
It follows from \cite[Lem.~7]{Bez00}\footnote{The lemma in the reference is stated for an Esakia space of finite depth, but that assumption is never used in the proof.} that $Y_n$ is a closed upset in $Y$. So, \cite[Lem.~3.4.11]{Esa19} yields that $Y_n$ is an Esakia space.
It is then clear that $Y_n \in \ERS_n$. 
Since $Y_n$ is a closed upset of $Y$, the inclusion $e \colon Y_n \hookrightarrow Y$ is an $\ERS$-morphism. Let $f \colon Z \to Y$ be an $\ERS$-morphism with $Z \in \ERS_n$. We show that there is a unique continuous p-morphism $g \colon Z \to Y_n$ such that $e \circ g = f$. 
If $z \in Z$, then $d(z) \le n$, and so $\up f(z) = f[\up z]$ has cardinality smaller or equal to $n$. Thus, $f[Z] \subseteq Y_n$. Take $g \colon Z \to Y_n$ to be the restriction of $f$. Then $g$ is the unique continuous p-morphism such that $e \circ g = f$. 
It follows from \cite[Thm.~IV.1.2]{Mac71} that $(-)_n$ is a well-defined functor that is right adjoint to the inclusion $\ERS_n \hookrightarrow \ERS$.
\end{proof}

Since $(-)_n$ is a right adjoint, the following is immediate.

\begin{corollary}\label{cor:subn preserving limits}
$(-)_n \colon \ERS \to \ERS_n$ preserves all products.
\end{corollary}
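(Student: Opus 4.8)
The plan is to appeal to the standard categorical principle that right adjoint functors preserve all limits (see, e.g., \cite[Sec.~V.5]{Mac71}). By \cref{prop:ERSn to ER right adjoint}, the functor $(-)_n \colon \ERS \to \ERS_n$ is right adjoint to the inclusion $\ERS_n \hookrightarrow \ERS$, and since products are a particular kind of limit, it follows at once that $(-)_n$ carries any product cone in $\ERS$ to a product cone in $\ERS_n$.

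The only point that deserves to be made explicit is the intended meaning of ``preserves all products'': we are not claiming that arbitrary products exist in $\ERS$, but rather that whenever a family $\{Y_i\}_{i \in I}$ of Esakia root systems admits a product $\prod_{i \in I} Y_i$ in $\ERS$, the object $\bigl(\prod_{i \in I} Y_i\bigr)_n$, equipped with the morphisms induced by the projections, is a product of $\{(Y_i)_n\}_{i \in I}$ in $\ERS_n$. Under this reading the abstract adjunction is all that is needed: the usual diagram chase verifying that a right adjoint preserves limits---using the universal property of the product cone together with the unit of the adjunction---applies without any modification, and in particular no feature special to Esakia root systems is used.

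I do not anticipate any genuine difficulty, which is why the corollary is stated as immediate. It is worth noting, however, that the more delicate questions---whether the cartesian product of a family of Esakia root systems is again an Esakia root system and whether it carries the product topology---play no role here; they will be taken up in \cref{sec:coproducts}, where products in $\ERS$ are described concretely. The present argument is purely formal.
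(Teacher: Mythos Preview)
Your proposal is correct and follows exactly the paper's own reasoning: the paper simply notes that $(-)_n$ is a right adjoint (by \cref{prop:ERSn to ER right adjoint}) and hence preserves products, and your added clarification about the meaning of ``preserves all products'' is a harmless elaboration of this one-line argument.
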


\section{Free G\"odel algebras}\label{sec:free Godel}

In this section we employ Priestley and Esakia dualities to establish a dual description of the G\"odel algebra free over a given distributive lattice. We begin by introducing the notion of closed chain, which will play a fundamental role in this investigation.

\begin{definition}
Let $X$ be a Priestley space. A subset of $X$ is called a \emph{chain} if it is totally ordered with respect to the order on $X$. A chain is said to be \emph{closed} when it is closed in the topology on $X$. We denote by $\CC(X)$ the set of all nonempty closed chains of $X$. 
\end{definition}

Our first goal is to equip $\CC(X)$ with the structure of an Esakia root system and show that $\CC(X)$ is dual to the G\"odel algebra free over the distributive lattice $X^*$. We start by putting a topology on $\CC(X)$. 

If $X$ is a Stone space, let $\V(X)$ be the set of all nonempty closed subsets of $X$. It is well known (see \cite[Thm.~4.9]{Mic51}) that $\V(X)$ becomes a Stone space once equipped with the topology generated by the subbasis $\{ \Box V, \Diamond V \mid V \text{ clopen of } X\}$, where
\begin{align*}
\Box V \coloneqq \{ F \in \V(X) \mid F \subseteq V \} \qquad \text{and} \qquad \Diamond V \coloneqq \{ F \in \V(X) \mid F \cap V \neq \varnothing \}.
\end{align*}
Moreover, $\Box V$ and $\Diamond V$ are clopen subsets of $\V(X)$ for any $V$ clopen of $X$.
The space $\V(X)$ is known as the \emph{Vietoris space} of $X$.

\begin{theorem}\label{thm:CC(X) closed in V(X)}
Let $X$ be a Priestley space. Then $\CC(X)$ is a closed subset of $\V(X)$.
\end{theorem}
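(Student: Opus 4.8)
The plan is to prove the statement by showing that the complement $\V(X)\setminus\CC(X)$ is open. Observe that a nonempty closed set $F\subseteq X$ fails to be a chain exactly when it contains two incomparable points, so the task reduces to expressing the property ``$F$ contains incomparable points'' in terms of the subbasic Vietoris opens. The key remark is twofold: first, by the Priestley separation axiom, if $x\nleq y$ then there is a clopen upset containing $x$ but not $y$; second, if $U$ and $W$ are clopen upsets of $X$, then every point of $U\setminus W$ is incomparable with every point of $W\setminus U$, since comparability of such points would violate the upset property of $U$ or of $W$.

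Using these remarks, I would establish the set identity
\[
\V(X)\setminus\CC(X)=\bigcup\bigl\{\Diamond(U\setminus W)\cap\Diamond(W\setminus U)\mid U,W\text{ clopen upsets of }X\bigr\}.
\]
For the inclusion $\subseteq$, take $F\in\V(X)$ that is not a chain, pick incomparable $x,y\in F$, and use Priestley separation to find clopen upsets $U\ni x$ with $y\notin U$ and $W\ni y$ with $x\notin W$; then $x\in F\cap(U\setminus W)$ and $y\in F\cap(W\setminus U)$, so $F$ lies in the set on the right. For the inclusion $\supseteq$, if $F$ meets both $U\setminus W$ and $W\setminus U$, the second remark above produces incomparable points of $F$, whence $F\notin\CC(X)$.

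Since $U\setminus W$ and $W\setminus U$ are clopen in $X$ whenever $U,W$ are, the sets $\Diamond(U\setminus W)$ and $\Diamond(W\setminus U)$ are clopen in $\V(X)$ by the recalled properties of the Vietoris topology, so the right-hand side of the identity is a union of open (indeed clopen) sets and hence open. This yields that $\CC(X)$ is closed in $\V(X)$.

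I expect the only real subtlety to be the second remark above: merely knowing that $F$ meets two distinct clopen upsets is not enough to force incomparable points in $F$, since a chain can meet many clopen upsets. The argument genuinely needs the set differences $U\setminus W$ and $W\setminus U$, together with the fact that clopen upsets separate incomparable points in both directions. Once this is noticed, the remaining steps are routine manipulations with the Vietoris subbasis.
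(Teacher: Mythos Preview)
Your proposal is correct and is essentially the same argument as the paper's: both show the complement is open by finding, for each non-chain $F$, incomparable $x,y\in F$, applying Priestley separation in both directions to obtain clopen upsets $U\ni x$, $W\ni y$ with $y\notin U$, $x\notin W$, and then observing that $\Diamond(U\setminus W)\cap\Diamond(W\setminus U)$ is a clopen neighborhood of $F$ disjoint from $\CC(X)$. The only difference is cosmetic—you package the argument as a set identity for $\V(X)\setminus\CC(X)$, whereas the paper proceeds pointwise—but the separating sets and the verification that no chain can meet both $U\setminus W$ and $W\setminus U$ are identical.
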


\begin{proof}
As the elements of $\CC(X)$ are closed chains, it is clear that $\CC(X) \subseteq \V(X)$. To show that $\CC(X)$ is closed, let $F \in \V(X) \setminus \CC(X)$. Our goal is to exhibit an open neighborhood of $F$ in $\V(X)$ that is disjoint from $\CC(X)$. Since $F \in \V(X) \setminus \CC(X)$, it is a nonempty closed subset of $X$ that is not totally ordered with respect to the order on $X$. Thus, there are $x_1,x_2 \in F$ such that $x_1 \nleq x_2$ and $x_2 \nleq x_1$. Since $X$ is a Priestley space, there are clopen upsets $U_1,U_2$ of $X$ such that $x_1 \in U_1 \setminus U_2$ and $x_2 \in U_2 \setminus  U_1$. Consider $\Vcal \coloneqq \Diamond (U_1 \setminus U_2) \cap \Diamond (U_2 \setminus U_1)$, which is a clopen subset of $\V(X)$ because $U_1 \setminus U_2$ and $U_2 \setminus U_1$ are clopen in $X$. Moreover, $F \in \Vcal$ because $x_1 \in F \cap (U_1 \setminus U_2)$ and $x_2 \in F \cap (U_2 \setminus U_1)$. It remains to show that $\Vcal$ is disjoint from $\CC(X)$. Assume that there is $C \in \CC(X) \cap \Vcal$. Then $C \in \Diamond(U_1 \setminus U_2) \cap \Diamond(U_2\setminus U_1)$, and so there are $y_1,y_2 \in X$ such that $y_1 \in C \cap (U_1  \setminus  U_2)$ and $y_2 \in C \cap (U_2  \setminus  U_1)$. Since $C$ is a chain, $y_1 \le y_2$ or $y_2 \le y_1$. If $y_1 \le y_2$, it follows that $y_2 \in U_1$ because $y_1 \in U_1$ and $U_1$ is an upset. This contradicts that $y_2 \in U_2  \setminus U_1$. If $y_2 \le y_1$, we also obtain a contradiction with a similar argument. Therefore, $\Vcal$ is an open neighborhood of $F$ in $\V(X)$ that is disjoint from $\CC(X)$. As $F$ was an arbitrary element of $\V(X) \setminus \CC(X)$, we have shown that $\CC(X)$ is a closed subset of $\V(X)$. 
\end{proof}

From now on, we fix a Priestley space $X$ and will always assume that $\CC(X)$ is equipped with the subspace topology induced by the Vietoris topology on $\V(X)$.  
The following corollary is an immediate consequence of the fact that a closed subspace of a Stone space is a Stone space (see, e.g., \cite[Lem.~32.2]{GH09}).

\begin{corollary}\label{thm:CC(X) Stone}
$\CC(X)$ is a Stone space.
\end{corollary}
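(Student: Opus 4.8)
The statement to prove is simply \Cref{thm:CC(X) Stone}: that $\CC(X)$ is a Stone space. The proof is essentially immediate given \Cref{thm:CC(X) closed in V(X)}.

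The plan is as follows. First I would recall that $X$, being a Priestley space, is in particular a Stone space. Consequently its Vietoris space $\V(X)$ is a Stone space, by the result of Michael cited just before \Cref{thm:CC(X) closed in V(X)} (and recalled in the discussion of $\V(X)$). Next I would invoke \Cref{thm:CC(X) closed in V(X)}, which tells us that $\CC(X)$ is a closed subset of $\V(X)$. Finally, since a closed subspace of a Stone space is again a Stone space (the cited \cite[Lem.~32.2]{GH09}), it follows that $\CC(X)$, equipped with the subspace topology induced from the Vietoris topology on $\V(X)$, is a Stone space.

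I do not anticipate any obstacle here: the statement is a corollary, and every ingredient has already been established in the excerpt. The only thing to be slightly careful about is making explicit that the topology on $\CC(X)$ being referred to is precisely the subspace topology inherited from $\V(X)$, which is exactly the convention fixed immediately after \Cref{thm:CC(X) closed in V(X)}. So the write-up is a one-line chaining of: Stone space $X$ $\Rightarrow$ Stone space $\V(X)$ $\Rightarrow$ (closed subspace) Stone space $\CC(X)$.
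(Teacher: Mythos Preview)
Your proposal is correct and follows exactly the paper's approach: the corollary is stated immediately after \cref{thm:CC(X) closed in V(X)} with the remark that it is an immediate consequence of the fact that a closed subspace of a Stone space is a Stone space, which is precisely the chain of implications you outline.
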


Since $\V(X)$ is leaving the scene and the spotlight will be on $\CC(X)$, with a slight abuse of notation we set
\begin{align*}
\Box A \coloneqq \{ C \in \CC(X) \mid C \subseteq A \} \qquad \text{and} \qquad \Diamond A \coloneqq \{ C \in \CC(X) \mid C \cap A \neq \varnothing \}
\end{align*}
for any subset $A$ of $X$.
In the following lemma we gather some useful facts about the topology on $\CC(X)$ that will be used throughout the paper.

\begin{lemma}\plabel{lem:facts CC(X)}
\hfill\begin{enumerate}
\item\label[lem:facts CC(X)]{lem:facts CC(X):item1} If $A,B \subseteq X$, then $\Box (A \cap B)=\Box A \cap \Box B$ and $\Diamond (A \cup B)=\Diamond A \cup \Diamond B$.
\item\label[lem:facts CC(X)]{lem:facts CC(X):item2} If $A \subseteq X$, then $\CC(X)  \setminus  \Box A = \Diamond (X \setminus A)$ and $\CC(X)  \setminus  \Diamond A = \Box (X \setminus A)$.
\item\label[lem:facts CC(X)]{lem:facts CC(X):item3} If $V$ is clopen in $X$, then $\Box V, \Diamond V$ are clopen in $\CC(X)$.
\item\label[lem:facts CC(X)]{lem:facts CC(X):item4} $\{\Box V,\, \Diamond V \mid V \text{ is clopen in } X\}$ is a subbasis for the topology on $\CC(X)$.
\item\label[lem:facts CC(X)]{lem:facts CC(X):item5} A basis for the topology on $\CC(X)$ is given by the clopen subsets of $\CC(X)$ of the form 
$\Box V \cap \Diamond W_1 \cap \dots \cap \Diamond W_n$ with $V,W_1,\dots, W_n$ clopen in $X$ and $W_1, \dots, W_n \subseteq V$.
\end{enumerate}
\end{lemma}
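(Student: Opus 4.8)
The plan is to treat the five items essentially in the order listed, since each relies on the previous ones. Items (1) and (2) I would dispatch by unravelling the definitions of $\Box$ and $\Diamond$ on subsets of $X$. For a nonempty closed chain $C$, membership $C \in \Box(A \cap B)$ reads ``$C \subseteq A$ and $C \subseteq B$'', and $C \in \Diamond(A \cup B)$ reads ``$C$ meets $A$ or $C$ meets $B$'', which is (1). For (2): $C \notin \Box A$ says some point of $C$ lies outside $A$, i.e.\ $C \in \Diamond(X \setminus A)$, while $C \notin \Diamond A$ says $C$ misses $A$, i.e.\ $C \subseteq X \setminus A$, i.e.\ $C \in \Box(X \setminus A)$; here I use that every element of $\CC(X)$ is nonempty.

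For (3) and (4) I would use that $\CC(X)$ carries the subspace topology from the Vietoris space $\V(X)$: the sets $\Box V$ and $\Diamond V$ of $\CC(X)$ are exactly the traces on $\CC(X)$ of the like-named clopen subsets of $\V(X)$ recalled before \cref{thm:CC(X) closed in V(X)}, hence clopen in $\CC(X)$, which is (3). Item (4) then follows from the standard fact that restricting a subbasis of a space to a subspace yields a subbasis of the subspace topology, applied to the Vietoris subbasis $\{\Box V, \Diamond V \mid V \text{ clopen in } X\}$.

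Item (5) is the only one requiring a bit of bookkeeping, though I do not expect a genuine obstacle. A basis for $\CC(X)$ consists of the finite intersections of the subbasic sets from (4), and these are clopen by (3) (finite intersections of clopens being clopen). A typical such intersection has the shape $\Box V_1 \cap \cdots \cap \Box V_k \cap \Diamond W_1 \cap \cdots \cap \Diamond W_n$, the empty intersection being $\CC(X) = \Box X$. Using (1) I collapse the box factors into a single $\Box V$ with $V \coloneqq V_1 \cap \cdots \cap V_k$ clopen, reading $V \coloneqq X$ when $k = 0$. It remains to arrange $W_i \subseteq V$: for any $C \in \Box V$ one has $C \cap W_i = C \cap (W_i \cap V)$, so $\Box V \cap \Diamond W_i = \Box V \cap \Diamond(W_i \cap V)$, and replacing each $W_i$ by the clopen set $W_i \cap V \subseteq V$ leaves the set unchanged while putting it in the required form. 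Conversely, each set of that form is a finite intersection of subbasic opens, hence open, so these sets indeed form a basis. The mild subtleties are only the edge cases (no box factors, no diamond factors, empty intersection) and the observation that shrinking the diamond witnesses to $V$ is harmless once one is inside $\Box V$.
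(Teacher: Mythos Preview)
Your proposal is correct and follows essentially the same approach as the paper: items (1)--(4) are dispatched directly from the definitions and the subspace/Vietoris setup, and for (5) you collapse the box factors via (1) and then use the identity $\Box V \cap \Diamond W = \Box V \cap \Diamond(V \cap W)$ to shrink each diamond into $V$, exactly as the paper does. Your treatment is slightly more explicit about the edge cases and the converse direction of (5), but there is no substantive difference.
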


\begin{proof}
\eqref{lem:facts CC(X):item1} and \eqref{lem:facts CC(X):item2} are straightforward consequences of the definitions of $\Box A$ and $\Diamond A$.

\eqref{lem:facts CC(X):item3} and \eqref{lem:facts CC(X):item4} follow immediately from the definition of the topology on $\CC(X)$.

To verify \eqref{lem:facts CC(X):item5}, observe that it follows from \eqref{lem:facts CC(X):item4} that every open subset of $\CC(X)$ is a union of subsets of the form $\Box V_1 \cap \dots \cap \Box V_m \cap \Diamond V_1' \cap \dots \cap \Diamond V_n'$, where each $V_i$ and $V_j'$ is a clopen subset of $X$. By~\eqref{lem:facts CC(X):item1}, we obtain that $\Box V_1 \cap \dots \cap \Box V_m = \Box V$, where $V \coloneqq V_1 \cap \dots \cap V_m$ is clopen in $X$. Finally, it follows from the definitions of $\Box A$ and $\Diamond B$ that $\Box A \cap \Diamond B = \Box A \cap \Diamond (A \cap B)$ for every $A,B \subseteq X$, and hence
\[
\Box V \cap \Diamond V_1' \cap \dots \cap \Diamond V_n' = \Box V \cap \Diamond (V \cap V_1') \cap \dots \cap \Diamond (V \cap V_n').
\]
This yields the claim because each $V \cap V_j'$ is a clopen subset of $X$ contained in $V$.
\end{proof}

We now define a partial order on $\CC(X)$ that will make it into an Esakia root system.

\begin{definition}
Let $C_1,C_2 \in \CC(X)$. We write $C_1 \lec C_2$ iff $C_2 \subseteq C_1$ and $C_2$ is an upset in $C_1$.
\end{definition}

If $X \in \Pries$, then $\up x$ and $\down x$ are closed for every $x \in X$ (see, e.g., \cite[Prop.~2.6(ii)]{Pri84}). In particular, if $Y \in \ERS$, then $\up y$ is a closed chain for every $y \in Y$. The definition of $\lec$ is inspired by the fact that when $Y$ is an Esakia root system and $y_1,y_2 \in Y$, we have $\up y_1 \lec \up y_2$ iff $y_1 \le y_2$.

It is well known that any nonempty closed subset $F$ of a Priestley space contains elements that are minimal and elements that are maximal in $F$ with respect to $\le$ (see, e.g.,~\cite[Prop.~2.6]{Pri84}\footnote{A proof can be found in \cite[Cor.~3.2.2]{Esa19}, where this fact is stated for Esakia spaces, but the proof works verbatim for Priestley spaces.}).
As an immediate consequence of this fact, we obtain:

\begin{proposition}\label{lem:CC in Pries min and max}
Every $C \in \CC(X)$ has a least and a greatest element.
\end{proposition}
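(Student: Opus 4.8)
The plan is to deduce this directly from the fact, quoted immediately before the statement, that every nonempty closed subset of a Priestley space contains an element that is minimal and an element that is maximal in that subset with respect to $\le$. The only extra ingredient needed is that $C$ is a \emph{chain}, which upgrades ``minimal'' to ``least'' and ``maximal'' to ``greatest''.

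Concretely, I would argue as follows. Fix $C \in \CC(X)$. Since $C$ is a nonempty closed subset of the Priestley space $X$, the cited result produces an element $m \in C$ that is minimal in $C$, i.e.\ there is no $x \in C$ with $x < m$. Now take any $x \in C$. Because $C$ is totally ordered, either $m \le x$ or $x \le m$; in the latter case minimality of $m$ forces $x = m$, so in both cases $m \le x$. Hence $m$ is the least element of $C$. The argument for the greatest element is entirely symmetric: the cited result also gives a maximal element $M \in C$, and totality of the order on $C$ together with maximality of $M$ shows $x \le M$ for every $x \in C$.

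There is no real obstacle here; the statement is an immediate corollary of the quoted property of Priestley spaces, and the write-up is just the short observation that minimal/maximal elements of a chain are least/greatest. The one thing worth being explicit about is why a minimal element is comparable to \emph{every} element of $C$ — this is exactly where the hypothesis that $C$ is a chain (rather than an arbitrary nonempty closed set) is used.
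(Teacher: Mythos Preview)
Your proposal is correct and matches the paper's approach exactly: the paper states the proposition as an immediate consequence of the quoted fact that nonempty closed subsets of Priestley spaces have minimal and maximal elements, and your write-up simply spells out why minimal and maximal become least and greatest when the subset is a chain.
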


\cref{lem:CC in Pries min and max} yields the following alternative description of $\lec$.

\begin{lemma}\label{lem:equivalent def lec}
$C_1 \lec C_2$ iff there is $x \in C_1$ such that $C_2 = \up x \cap C_1$.
\end{lemma}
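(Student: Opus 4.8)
The plan is to prove the two implications of the biconditional separately; only the forward direction uses anything beyond unwinding definitions.

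First I would dispatch the easy implication. Suppose $C_2 = \up x \cap C_1$ for some $x \in C_1$. Then $C_2 \subseteq C_1$ is immediate, and since $\up x$ is an upset of $X$, the set $\up x \cap C_1$ is an upset of the subposet $C_1$; thus $C_2$ is an upset in $C_1$, which is precisely the definition of $C_1 \lec C_2$. (As a sanity check one can observe that $x \in C_2$, so $C_2 \neq \varnothing$, and $C_2$ is a closed chain, being the intersection of the closed set $\up x$ with the closed chain $C_1$; but this only reconfirms $C_2 \in \CC(X)$, which is already part of the hypotheses.)

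For the forward implication, assume $C_1 \lec C_2$, i.e.\ $C_2 \subseteq C_1$ and $C_2$ is an upset in $C_1$. The key move is to take $x$ to be the least element of $C_2$, which exists by \cref{lem:CC in Pries min and max} since $C_2 \in \CC(X)$; note $x \in C_2 \subseteq C_1$. I would then check $C_2 = \up x \cap C_1$ by double inclusion: the inclusion $\subseteq$ holds because every element of $C_2$ lies above $x$ and belongs to $C_1$; the inclusion $\supseteq$ holds because any $y \in \up x \cap C_1$ satisfies $y \ge x$ with $x \in C_2$ and $y \in C_1$, so $y \in C_2$ as $C_2$ is an upset in $C_1$. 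This gives the desired equality with witness $x \in C_1$.

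I do not anticipate any genuine obstacle: the statement is essentially a reformulation of the definition of $\lec$, and the only thing to notice is that the witnessing point must be chosen as the minimum of $C_2$, whose existence is exactly what \cref{lem:CC in Pries min and max} provides.
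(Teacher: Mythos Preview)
Your proposal is correct and follows essentially the same argument as the paper: the backward direction is immediate from the definition of $\lec$, and for the forward direction both you and the paper take $x$ to be the least element of $C_2$ (via \cref{lem:CC in Pries min and max}) and verify $C_2 = \up x \cap C_1$ by double inclusion.
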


\begin{proof}
If $C_2 = \up x \cap C_1$, then $C_2$ is an upset of $C_1$, and hence $C_1 \lec C_2$. To show the other implication, assume $C_1 \lec C_2$ and let $x$ be the least element of $C_2$, which exists by \cref{lem:CC in Pries min and max}. Since $C_1 \lec C_2$, it follows that $C_2 \subseteq C_1$, and hence $C_2 \subseteq \up x \cap C_1$. Conversely, if $y \in \up x \cap C_1$, then $y \in C_2$ because $C_2$ is an upset in $C_1$ and $x \in C_2$. Thus, $\up x \cap C_1 \subseteq C_2$. 
\end{proof}

\begin{theorem}\label{thm:CC(X) root system}
$(\CC(X), \lec)$ is a root system.
\end{theorem}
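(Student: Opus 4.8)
The plan is to verify the two conditions defining a root system: that $\lec$ is a partial order on $\CC(X)$, and that for each $C \in \CC(X)$ the set $\{D \in \CC(X) \mid C \lec D\}$ is totally ordered by $\lec$. The principal tool will be the reformulation in \cref{lem:equivalent def lec}, namely that $C_1 \lec C_2$ iff $C_2 = \up x \cap C_1$ for some $x \in C_1$, used in combination with the fact that every member of $\CC(X)$ is a chain.

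First I would check that $\lec$ is a partial order. Reflexivity holds because $C$ is an upset of itself, and antisymmetry is immediate from the definition since $C_1 \lec C_2$ and $C_2 \lec C_1$ force $C_1 \subseteq C_2$ and $C_2 \subseteq C_1$. For transitivity, suppose $C_1 \lec C_2$ and $C_2 \lec C_3$. Using \cref{lem:equivalent def lec}, write $C_2 = \up x \cap C_1$ with $x \in C_1$ and $C_3 = \up y \cap C_2$ with $y \in C_2$. Then $y \in C_2 \subseteq \up x$, so $x \le y$, hence $\up y \subseteq \up x$ and $C_3 = \up y \cap \up x \cap C_1 = \up y \cap C_1$; since also $y \in C_2 \subseteq C_1$, \cref{lem:equivalent def lec} gives $C_1 \lec C_3$. (Alternatively, one can argue directly from the original definition by checking that $C_3$ is still an upset in $C_1$.)

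It then remains to prove that $\{D \in \CC(X) \mid C \lec D\}$ is a chain for every $C \in \CC(X)$; that is, if $C \lec D_1$ and $C \lec D_2$, then $D_1$ and $D_2$ are $\lec$-comparable. By \cref{lem:equivalent def lec}, write $D_1 = \up x_1 \cap C$ and $D_2 = \up x_2 \cap C$ with $x_1, x_2 \in C$. Since $C$ is a chain, we may assume without loss of generality that $x_1 \le x_2$. Then $\up x_2 \subseteq \up x_1$, so $D_2 = \up x_2 \cap C \subseteq \up x_1 \cap C = D_1$, and moreover $x_2 \in \up x_1 \cap C = D_1$, so that $D_2 = \up x_2 \cap D_1$. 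Applying \cref{lem:equivalent def lec} once more, with $D_1$ in the role of $C_1$ and $x_2$ in the role of $x$, yields $D_1 \lec D_2$, as desired.

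I do not expect a real obstacle: each step is a short set-theoretic manipulation, and \cref{lem:equivalent def lec} does most of the work by converting the order condition into an intersection with a principal upset. The only place demanding a bit of care is transitivity, where one must observe that $y$ lies above $x$ in order to collapse $\up y \cap \up x$ to $\up y$; this is where the chain structure of the elements of $\CC(X)$ --- and, through \cref{lem:equivalent def lec}, the existence of least elements granted by \cref{lem:CC in Pries min and max} --- is being used.
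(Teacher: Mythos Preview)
Your proposal is correct and follows essentially the same approach as the paper: both verify reflexivity and antisymmetry directly, and both use \cref{lem:equivalent def lec} together with the fact that $C$ is a chain to establish that $\upc C$ is totally ordered. The only minor difference is that for transitivity the paper argues directly from the definition (an upset of an upset is an upset), which is slightly shorter than your route through \cref{lem:equivalent def lec}; you mention this alternative parenthetically anyway.
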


\begin{proof}
It is an immediate consequence of its definition that $\lec$ is reflexive and antisymmetric. To show that $\lec$ is transitive, let $C_1,C_2,C_3 \in \CC(X)$ such that $C_1 \lec C_2$ and $C_2 \lec C_3$. Then $C_3 \subseteq C_2 \subseteq C_1$ and $C_3$ is an upset in $C_2$, which is an upset in $C_1$. Thus, $C_3$ is an upset in $C_1$, and hence $C_1 \lec C_3$. Thus, $\lec$ is a partial order. Consider $C,C_1,C_2 \in \CC(X)$ with $C \lec C_1,C_2$. We need to show that $C_1 \lec C_2$ or $C_2 \lec C_1$. By \cref{lem:equivalent def lec}, there are $x_1,x_2 \in C$ such that $C_1 = \up x_1 \cap C$ and $C_2 = \up x_2 \cap C$. Since $C$ is a chain, $x_1 \le x_2$ or $x_2 \le x_1$. Therefore, $C_1 \lec C_2$ or $C_2 \lec C_1$. This proves that $(\CC(X), \lec)$ is a root system.
\end{proof}

\begin{notation}
To avoid confusion, subsets of $\CC(X)$ will be denoted with calligraphic capital letters and we will write $\upc \mathcal{A}$ and $\downc \mathcal{A}$ to denote the downset and upset in $(\CC(X), \lec)$ generated by a subset $\mathcal{A}$ of $\CC(X)$. As usual, if $C \in \CC(X)$, we will simply write $\upc C$ and $\downc C$ instead of $\upc \{C\}$ and $\downc \{C\}$.
\end{notation}

From now on, we will always assume that $\CC(X)$ is equipped with $\lec$. To show that $\CC(X)$ is an Esakia root system, we need the following technical lemma.

\begin{lemma}\plabel{lem:Box and Diamond properties}
Let $A, B_1, \dots, B_n$ be subsets of $X$.
\begin{enumerate}
\item\label[lem:Box and Diamond properties]{lem:Box and Diamond properties:item1} $\Box A$ is an upset of $\CC(X)$.
\item\label[lem:Box and Diamond properties]{lem:Box and Diamond properties:item2} $\Diamond A$ is a downset of $\CC(X)$.
\item\label[lem:Box and Diamond properties]{lem:Box and Diamond properties:item3} $\downc (\Box A \cap \Diamond B_1 \cap \dots \cap \Diamond B_n) = \downc (\Box A \cap \Diamond B_1) \cap \dots \cap \downc (\Box A \cap \Diamond B_n)$.
\end{enumerate}
Let also $D$ be a downset of $X$ and $U$ an upset of $X$.
\begin{enumerate}[resume]
\item\label[lem:Box and Diamond properties]{lem:Box and Diamond properties:item4} $\Box D$ is a downset of $\CC(X)$.
\item\label[lem:Box and Diamond properties]{lem:Box and Diamond properties:item5} $\Diamond U$ is an upset of $\CC(X)$.
\item\label[lem:Box and Diamond properties]{lem:Box and Diamond properties:item6} If  $U \cap D \subseteq A$, then 
\[
\downc (\Box A \cap \Diamond (U \cap D)) = \Box(A \cup D) \cap \Diamond (U \cap D).
\]
\item\label[lem:Box and Diamond properties]{lem:Box and Diamond properties:item7} If $D \subseteq A$, then $\Box A \cap \Diamond D$ is a downset of $\CC(X)$.
\end{enumerate}
\end{lemma}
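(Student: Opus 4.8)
The plan is to prove the seven items roughly in order, treating (1), (2), (4), (5) as quick warm-ups that follow directly from the definition of $\lec$, and then building (6) — the main obstacle — from the alternative description of $\lec$ in \cref{lem:equivalent def lec}; items (3) and (7) will then be easy consequences.

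For \eqref{lem:Box and Diamond properties:item1}, suppose $C_1 \in \Box A$ and $C_1 \lec C_2$; then $C_2 \subseteq C_1 \subseteq A$, so $C_2 \in \Box A$. For \eqref{lem:Box and Diamond properties:item2}, if $C_1 \in \Diamond A$ and $C_2 \lec C_1$, then $C_1 \subseteq C_2$, so $C_1 \cap A \neq \varnothing$ forces $C_2 \cap A \neq \varnothing$. For \eqref{lem:Box and Diamond properties:item4}, suppose $C_1 \in \Box D$, so $C_1 \subseteq D$, and let $C_2 \lec C_1$; by \cref{lem:equivalent def lec} there is $x \in C_2$ with $C_1 = \up x \cap C_2$, hence $x \le y$ for some $y \in C_1 \subseteq D$, and since $D$ is a downset, $x \in D$; more generally every element of $C_2$ lies below the greatest element of $C_2$, which we must instead argue lies in $D$ — here it is cleaner to use that $C_1$ and $C_2$ share their greatest element (both equal $\max C_2$, since $C_1 = \up x \cap C_2$ is the final segment of $C_2$), so $\max C_2 \in C_1 \subseteq D$, and as $D$ is a downset $C_2 \subseteq \down(\max C_2) \subseteq D$, giving $C_2 \in \Box D$. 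For \eqref{lem:Box and Diamond properties:item5}, if $C_1 \in \Diamond U$, pick $u \in C_1 \cap U$; given $C_1 \lec C_2$, we have $C_2 = \up x \cap C_1$ for some $x \in C_1$, and since $C_1$ is a chain either $u \le x$ (then $u$ might not be in $C_2$) — instead note $x \le u$ or $u \le x$; if $x \le u$ then $u \in C_2$ and $u \in U$; if $u \le x$ then $x \in \up u \subseteq U$ since $U$ is an upset, and $x \in C_2$, so again $C_2 \cap U \neq \varnothing$. Thus $C_2 \in \Diamond U$.

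The heart of the matter is \eqref{lem:Box and Diamond properties:item6}. For the inclusion $\supseteq$: take $C \in \Box(A \cup D) \cap \Diamond(U \cap D)$, so $C \subseteq A \cup D$ and there is $p \in C \cap U \cap D$. I would consider the final segment $C' \coloneqq \up p \cap C$; then $C \lec C'$ by \cref{lem:equivalent def lec}, and I claim $C' \subseteq A$: if $y \in C'$ then $p \le y$, so $y \in U$ (as $U$ is an upset containing $p$), and $y \in A \cup D$; if $y \in D$ then, since $p \le y$ and... no — rather, $y \in U$ and if also $y \in D$ then $y \in U \cap D \subseteq A$; and if $y \notin D$ then $y \in A$ from $y \in A \cup D$. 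Either way $y \in A$, so $C' \subseteq A$, i.e. $C' \in \Box A$. Moreover $p \in C' \cap U \cap D$, so $C' \in \Diamond(U \cap D)$, and $C \lec C'$ exhibits $C \in \downc(\Box A \cap \Diamond(U \cap D))$. For $\subseteq$: $\Diamond(U \cap D)$ is a downset by \eqref{lem:Box and Diamond properties:item2} (or intersect \eqref{lem:Box and Diamond properties:item4}, \eqref{lem:Box and Diamond properties:item5} suitably — but \eqref{lem:Box and Diamond properties:item2} suffices for the $\Diamond$ part), so any $\lec$-predecessor $C$ of some $C'' \in \Box A \cap \Diamond(U \cap D)$ lies in $\Diamond(U \cap D)$; and since $C'' \subseteq A \subseteq A \cup D$ and $C'' \subseteq C$ with $C = \up x \cap C \supseteq C''$ meaning $C'' $ is the final segment of $C$, the part of $C$ below $C''$ lies in $\down(\min C'')$. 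Here $\min C'' \in A$, but I need $C \subseteq A \cup D$: an element $z \in C \setminus C''$ satisfies $z \le \min C''$; this does not obviously put $z$ in $A \cup D$ — so I must use \eqref{lem:Box and Diamond properties:item4}: since $\min C'' \in U \cap D \subseteq D$ wait, that is only known if $C'' \in \Diamond(U\cap D)$ witnessed at its least element, which need not hold. The correct route: $\Box(A \cup D)$ should be shown to be a downset-closure-absorbing set, i.e. $\downc(\Box A) \subseteq \Box(A \cup D)$; indeed if $C \lec C''$ with $C'' \subseteq A$, then $C = C'' \cup (C \cap \down(\min C''))$ and every $z$ in the lower part satisfies $z \le \min C''$; I do not get $z \in A \cup D$ for free, so in fact the hypothesis $U \cap D \subseteq A$ must be combined with $D$ being a downset differently — the claim is that $\downc(\Box A \cap \Diamond(U\cap D)) \subseteq \Box(A \cup D)$, and for this I use that if $C \lec C''$ and $C'' \in \Diamond(U \cap D)$ with witness $p \in C'' \cap U \cap D$, then since $C'' \subseteq A$ is a final segment of $C$ and $p \in C'' \subseteq \up(\min C)$... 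I would instead show directly: for $z \in C$, either $z \ge p$ (then $z \in C''$ since $C'' = \up(\min C'') \cap C$ and $\min C'' \le p$, so $z \in C'' \subseteq A$) or $z \le p$ along the chain, and then $z \le p \in D$ with $D$ a downset gives $z \in D$. Hence $C \subseteq A \cup D$, so $C \in \Box(A \cup D)$, completing $\subseteq$.

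Finally, \eqref{lem:Box and Diamond properties:item7} is the special case $U = X$ of \eqref{lem:Box and Diamond properties:item6}: if $D \subseteq A$ then $X \cap D = D \subseteq A$, so $\downc(\Box A \cap \Diamond D) = \Box(A \cup D) \cap \Diamond D = \Box A \cap \Diamond D$ (using $D \subseteq A$ so $A \cup D = A$), which shows $\Box A \cap \Diamond D$ equals its own downward closure, i.e.\ is a downset. And \eqref{lem:Box and Diamond properties:item3}: the inclusion $\subseteq$ is immediate since $\Box A \cap \Diamond B_1 \cap \dots \cap \Diamond B_n \subseteq \Box A \cap \Diamond B_i$ for each $i$ and $\downc$ is monotone. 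For $\supseteq$, take $C$ in the right-hand side; for each $i$ there is $C_i \gec C$ with $C_i \in \Box A \cap \Diamond B_i$. By \cref{lem:equivalent def lec}, $C_i = \up x_i \cap C$ with $x_i \in C$; since $C$ is a chain the $x_i$ are linearly ordered, so one of them, say $x_k$, is the least, whence $C_k \subseteq C_i$... wait, least $x_k$ gives $\up x_k \supseteq \up x_i$, so $C_k \supseteq C_i$ — I want the opposite. Take $x_k$ greatest: then $C_k = \up x_k \cap C \subseteq C_i$ for all $i$, so $C_k \subseteq A$ (as $C_k \subseteq C_i \subseteq ... $ no, $C_k \subseteq C_i$ and $C_i \in \Box A$ gives $C_k \subseteq A$ only if $C_i \subseteq A$, which holds). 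Hmm, but then I need $C_k \cap B_i \neq \varnothing$ for each $i$, which is not guaranteed. So instead I pick, for each $i$, the witness $b_i \in C_i \cap B_i \subseteq C \cap B_i$, and I want a single closed chain $C^\star$ with $C \lec C^\star$, $C^\star \subseteq A$, and $b_i \in C^\star$ for all $i$. Let $m \coloneqq \min\{b_1,\dots,b_n\}$ (exists, chain), and set $C^\star \coloneqq \up m \cap C$; then $C \lec C^\star$, all $b_i \in C^\star$, and $C^\star \subseteq C_j$ where $b_j = m$... not quite — but $C^\star = \up m \cap C \subseteq \up x_j \cap C = C_j$ provided $x_j \le m$; since $b_j = m \in C_j = \up x_j \cap C$ we get $x_j \le m$, so indeed $C^\star \subseteq C_j \subseteq A$. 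Thus $C^\star \in \Box A \cap \Diamond B_1 \cap \dots \cap \Diamond B_n$ with $C \lec C^\star$, giving $C$ in the left-hand side.

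The main obstacle is keeping straight, throughout \eqref{lem:Box and Diamond properties:item6} and \eqref{lem:Box and Diamond properties:item3}, which direction ``final segment'' cuts and correctly invoking \cref{lem:equivalent def lec} (that $C_1 \lec C_2$ means $C_2$ is an \emph{up}-set final segment $\up x \cap C_1$ of $C_1$); once that bookkeeping is fixed, every step is a short chain-order argument using that closed chains have least and greatest elements (\cref{lem:CC in Pries min and max}).
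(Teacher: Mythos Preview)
Your proof is correct; every item is handled soundly once the stream-of-consciousness corrections settle. The overall strategy matches the paper's --- unpack $\lec$ via \cref{lem:equivalent def lec} and do chain bookkeeping --- but a few tactical choices differ. For \eqref{lem:Box and Diamond properties:item2} and \eqref{lem:Box and Diamond properties:item5} you argue directly, whereas the paper derives them from \eqref{lem:Box and Diamond properties:item1} and \eqref{lem:Box and Diamond properties:item4} by complementation ($\Diamond A = \CC(X) \setminus \Box(X \setminus A)$); your route is slightly longer but self-contained. For \eqref{lem:Box and Diamond properties:item4} you use the observation that a final segment shares its maximum with the ambient chain, giving a one-line argument via $\max C_2 \in D$; the paper instead does a two-case split on whether $y$ lies above or below the cut point $x$. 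The most substantive divergence is in \eqref{lem:Box and Diamond properties:item3}: the paper invokes \cref{thm:CC(X) root system} to conclude that $\upc C$ is a chain and hence the $C_i$ have a $\lec$-minimum $C_j$, which then serves as the common witness; you instead build the witness $C^\star = \up m \cap C$ explicitly from the least of the points $b_i \in C_i \cap B_i$, avoiding any appeal to the root-system property. Your construction is more elementary and makes \eqref{lem:Box and Diamond properties:item3} independent of \cref{thm:CC(X) root system}; the paper's argument is shorter but relies on that earlier structural result. For \eqref{lem:Box and Diamond properties:item6} and \eqref{lem:Box and Diamond properties:item7} the two proofs coincide.
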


\begin{proof}
\eqref{lem:Box and Diamond properties:item1}. Let $C_1,C_2 \in \CC(X)$ such that $C_1 \lec C_2$ and $C_1 \in \Box A$. Then $C_2 \subseteq C_1$ and $C_1 \subseteq A$. Thus, $C_2 \subseteq A$, and hence $C_2 \in \Box A$. This shows that $\Box A$ is an upset.

\eqref{lem:Box and Diamond properties:item2}. It follows from \eqref{lem:Box and Diamond properties:item1} that $\Box (X \setminus A)$ is an upset. By \cref{lem:facts CC(X):item2}, we have that $\Diamond A = \CC(X)  \setminus \Box (X \setminus A)$, and so $\Diamond A$ is a downset because it is the complement of an upset.

\eqref{lem:Box and Diamond properties:item3}. 
The left-to-right inclusion is an immediate consequence of the fact that taking the downset of a subset preserves inclusions. To prove the other inclusion, let 
\[
C \in \downc (\Box A \cap \Diamond B_1) \cap \dots \cap \downc (\Box A \cap \Diamond B_1).
\]	
Then there are $C_1, \dots, C_n \in \CC(X)$ such that $C \lec C_i$ and $C_i \in \Box A \cap \Diamond B_i$ for each $i$.
By \cref{thm:CC(X) root system}, $\CC(X)$ is a root system, and hence $\upc C$ is totally ordered with respect to $\lec$. Since $C_1,\dots,C_n \in \upc C$, there is $j$ such that $C_j \lec C_i$ for every $i$.
By \eqref{lem:Box and Diamond properties:item2}, $\Diamond B_i$ is a downset, and so $C_j \in \Diamond B_i$ for every $i$ because $C_i \in \Diamond B_i$ and $C_j \lec C_i$. Then $C_j \in \Box A \cap \Diamond B_1 \cap \dots \cap \Diamond B_n$.
Therefore, $C \in \downc (\Box A \cap \Diamond B_1 \cap \dots \cap \Diamond B_n)$ since $C \lec C_j$.

\eqref{lem:Box and Diamond properties:item4}. Let $C_1,C_2 \in \CC(X)$ with $C_1 \lec C_2$ and $C_2 \in \Box D$. We want to show that $C_1 \in \Box D$. Since $C_1 \lec C_2$, there is $x \in C_1$ such that $C_2 = \up x \cap C_1$ by \cref{lem:equivalent def lec}.  Let $y \in C_1$. 
Then $x \le y$ or $y \le x$ because $x,y \in C_1$ and $C_1$ is a chain.
If $x \le y$, we have $y \in \up x \cap C_1 = C_2 \subseteq D$. Otherwise, $y \le x \in C_2 \subseteq D$, and hence $y \in D$ because $D$ is a downset. In either case, $y \in D$. This shows that $C_1 \subseteq D$, and so we have proved that $C_1 \in \Box D$.

\eqref{lem:Box and Diamond properties:item5}. Since $U$ is an upset of $X$, we have that $X \setminus U$ is a downset, and so $\Box (X \setminus U)$ is a downset of $\CC(X)$ by \eqref{lem:Box and Diamond properties:item4}. \cref{lem:facts CC(X):item2} implies that $\Diamond U = \CC(X)  \setminus \Box (X \setminus U)$. Thus, $\Diamond U$ is an upset because it is the complement of a downset.

\eqref{lem:Box and Diamond properties:item6}. To show the left-to-right inclusion, assume that $C \in \downc (\Box A \cap \Diamond (U \cap D))$. Then there is $K \in \CC(X)$ such that $C \lec K$ and $K \in \Box A \cap \Diamond (U \cap D)$. Since $\Diamond (U \cap D)$ is a downset by \eqref{lem:Box and Diamond properties:item2}, we have that $C \in \Diamond (U \cap D)$. It then remains to show that $C \in \Box(A \cup D)$. From $K \in \Box A \cap \Diamond (U \cap D)$ it follows that $K \subseteq A$ and $K \cap U \cap D \neq \varnothing$.  Let $x \in K \cap U \cap D$. 
Since $C \lec K$ and $x \in K$, we obtain $\up x \cap C \subseteq K$. So, $\up x \cap C \subseteq A$ because $K \subseteq A$. We also have that $\down x \cap C \subseteq D$ because $D$ is a downset and $x \in D$. That $C$ is a chain implies $C = (\up x \cap C) \cup (\down x \cap C)$. Therefore, $C = (\up x \cap C) \cup (\down x \cap C) \subseteq A \cup D$, and hence $C \in \Box (A \cup D)$. This shows that 
$C \in \Box(A \cup D) \cap \Diamond (U \cap D)$. 

We now prove the other inclusion. Suppose that $C \in \Box(A \cup D) \cap \Diamond (U \cap D)$. Then $C \subseteq A \cup D$ and $C \cap U \cap D \neq \varnothing$. Take $x \in C \cap U \cap D$ and let $K = \up x \cap C$. Thus, $K \in \CC(X)$ and $C \lec K$. We show that $K \in \Box A \cap \Diamond (U \cap D)$. Since $x \in K \cap U \cap D$, we have that $K \in \Diamond (U \cap D)$. By \eqref{lem:Box and Diamond properties:item1},
$\Box (A\cup D)$ is an upset. Then $C \lec K$ implies
$K \in \Box(A \cup D)$, and so $K \subseteq A \cup D$. Because $x \in U$ and $U$ is an upset, we have $K \subseteq \up x \subseteq U$. Then the hypothesis that $U \cap D \subseteq A$ yields
\[
K \subseteq (A \cup D) \cap U \subseteq A \cup (U \cap D) = A,
\]
which implies that $K \in \Box A$. Therefore, $K \in \Box A \cap \Diamond (U \cap D)$, and so $C \in \downc (\Box A \cap \Diamond (U \cap D))$ because $C \lec K$.

\eqref{lem:Box and Diamond properties:item7}. It immediately follows from \eqref{lem:Box and Diamond properties:item6} by taking $U=X$.
\end{proof}

We will also need the following well-known property of Priestley spaces.

\begin{lemma}{\cite[Lem.~11.22]{DP02}}\label{prop:clopen union of convex}
Every clopen subset of a Priestley space $X$ is a finite union of subsets of the form $U \cap D$, where $U$ is a clopen upset and $D$ a clopen downset.
\end{lemma}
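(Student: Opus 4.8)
The plan is to prove the statement directly from the Priestley separation axiom together with compactness of $X$, without invoking duality. Write $V$ for the given clopen subset. The argument has two layers, each an application of compactness to a clopen (hence closed, hence compact) subset of the Stone space $X$.

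First I would fix $x \in V$ and produce a clopen upset $U_x$ and a clopen downset $D_x$ with $x \in U_x \cap D_x \subseteq V$. For each $y \in X \setminus V$ we have $x \neq y$, so antisymmetry of $\le$ forces $x \nleq y$ or $y \nleq x$. If $x \nleq y$, the Priestley separation axiom supplies a clopen upset $Z_y$ with $x \in Z_y$ and $y \notin Z_y$. If instead $y \nleq x$, it supplies a clopen upset $W$ with $y \in W$ and $x \notin W$, and I set $Z_y \coloneqq X \setminus W$, which is a clopen downset containing $x$ but not $y$. In either case $\{X \setminus Z_y \mid y \in X \setminus V\}$ is an open cover of the compact set $X \setminus V$, so there are $y_1, \dots, y_n$ with $X \setminus V \subseteq \bigcup_{i=1}^n (X \setminus Z_{y_i})$, i.e. $\bigcap_{i=1}^n Z_{y_i} \subseteq V$, while $x \in \bigcap_{i=1}^n Z_{y_i}$ by construction. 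Now I would let $U_x$ be the intersection of those $Z_{y_i}$ that are upsets (and $U_x \coloneqq X$ if there are none) and $D_x$ the intersection of those that are downsets (and $D_x \coloneqq X$ if there are none). Then $U_x$ is a clopen upset, $D_x$ a clopen downset, and $x \in U_x \cap D_x = \bigcap_{i=1}^n Z_{y_i} \subseteq V$.

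For the second layer, the family $\{U_x \cap D_x \mid x \in V\}$ is an open cover of the compact set $V$, so finitely many members $U_{x_1} \cap D_{x_1}, \dots, U_{x_m} \cap D_{x_m}$ already cover $V$. Since each of them is contained in $V$, this gives $V = \bigcup_{j=1}^m (U_{x_j} \cap D_{x_j})$, the required representation.

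I do not expect a genuine obstacle; the only point requiring care is the bookkeeping in the first layer: separating $x$ from a single point of the complement may require either a clopen upset or a clopen downset depending on which of $x \nleq y$, $y \nleq x$ holds, so the finite intersection $\bigcap_i Z_{y_i}$ must be reorganized into a single clopen upset intersected with a single clopen downset before compactness is applied the second time. Compactness is what keeps both the intermediate intersection and the final union finite.
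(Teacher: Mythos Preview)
Your proof is correct. The paper does not prove this lemma itself but simply cites it from \cite[Lem.~11.22]{DP02}, so there is no in-paper argument to compare against; your two-layer compactness argument is exactly the standard proof one finds in that reference.
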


\begin{theorem}\label{thm:CC in ERS}
$\CC(X)$ is an Esakia root system.
\end{theorem}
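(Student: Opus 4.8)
The plan is to assemble the statement from what is already in place: $\CC(X)$ is a Stone space by \cref{thm:CC(X) Stone}, and $(\CC(X),\lec)$ is a root system by \cref{thm:CC(X) root system}. So two things remain: (a)~verify the Priestley separation axiom, upgrading $\CC(X)$ to a Priestley space; and (b)~verify that $\downc\mathcal{V}$ is clopen for every clopen $\mathcal{V}\subseteq\CC(X)$, upgrading it to an Esakia space. Since $\CC(X)$ is a root system, it will then be an Esakia root system.

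For (a), I would argue as follows. Let $C_1\nlec C_2$. If $C_2\not\subseteq C_1$, pick $x\in C_2\setminus C_1$; since $X$ is a Stone space and $C_1$ is closed, there is a clopen $V$ of $X$ with $x\in V$ and $V\cap C_1=\varnothing$, and then $\Box(X\setminus V)$ is a clopen upset of $\CC(X)$ (by \cref{lem:Box and Diamond properties:item1} and \cref{lem:facts CC(X):item3}) that contains $C_1$ but not $C_2$. If instead $C_2\subseteq C_1$, then $C_2$ is not an upset in $C_1$, so there are $y\in C_2$ and $z\in C_1\setminus C_2$ with $y\le z$; here $y<z$ (as $y\in C_2$ and $z\notin C_2$), hence $z\notin\down y$. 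As $\down y$ is closed in $X$ (by, e.g., \cite[Prop.~2.6(ii)]{Pri84}), the closed set $C_2\cup\down y$ avoids $z$, so there is a clopen $A$ of $X$ with $C_2\cup\down y\subseteq A$ and $z\notin A$. A routine Priestley-space argument (cover the closed set $X\setminus A\subseteq X\setminus\down y$ by clopen upsets of $X$ that omit $y$ and use compactness) then produces a clopen upset $U$ of $X$ with $X\setminus A\subseteq U$ and $y\notin U$; put $D\coloneqq X\setminus U$, a clopen downset of $X$ with $y\in D$ and $D\subseteq A$. By \cref{lem:Box and Diamond properties:item7}, $\mathcal{D}\coloneqq\Box A\cap\Diamond D$ is a clopen downset of $\CC(X)$; it contains $C_2$ (since $C_2\subseteq A$ and $y\in C_2\cap D$) but not $C_1$ (since $z\in C_1\setminus A$). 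Then $\CC(X)\setminus\mathcal{D}$ is the desired clopen upset.

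For (b), let $\mathcal{V}$ be clopen. By \cref{lem:facts CC(X):item5} and compactness, $\mathcal{V}$ is a finite union of sets of the form $\Box V\cap\Diamond W_1\cap\dots\cap\Diamond W_n$ with $V,W_1,\dots,W_n$ clopen in $X$ and each $W_j\subseteq V$. Since $\downc$ commutes with finite unions, it suffices to show $\downc(\Box V\cap\Diamond W_1\cap\dots\cap\Diamond W_n)$ is clopen; by \cref{lem:Box and Diamond properties:item3} this equals $\bigcap_j\downc(\Box V\cap\Diamond W_j)$, so it is enough to treat $\downc(\Box V\cap\Diamond W)$ with $W\subseteq V$ clopen. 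Writing $W=\bigcup_{i=1}^r(U_i\cap D_i)$ with $U_i$ clopen upsets and $D_i$ clopen downsets of $X$ via \cref{prop:clopen union of convex}, each $U_i\cap D_i\subseteq W\subseteq V$, so \cref{lem:Box and Diamond properties:item6} gives $\downc(\Box V\cap\Diamond(U_i\cap D_i))=\Box(V\cup D_i)\cap\Diamond(U_i\cap D_i)$, which is clopen by \cref{lem:facts CC(X):item3}. Since $\Diamond W=\bigcup_i\Diamond(U_i\cap D_i)$ by \cref{lem:facts CC(X):item1} and $\downc$ commutes with unions, $\downc(\Box V\cap\Diamond W)$ is a finite union of clopen sets, hence clopen.

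I expect step (b) to be essentially bookkeeping with $\Box$ and $\Diamond$ once \cref{lem:Box and Diamond properties:item6} and the convex decomposition are available. The real obstacle is the Priestley separation in the case $C_2\subsetneq C_1$ with $C_2$ not an upset in $C_1$: there $C_1$ and $C_2$ may share their least and greatest elements, so that neither a set $\Box(\text{clopen})$ nor a set $\Diamond(\text{clopen upset})$ by itself separates them, and one is forced to separate via a clopen downset of the form $\Box A\cap\Diamond D$. The constraint $D\subseteq A$ needed to invoke \cref{lem:Box and Diamond properties:item7} is precisely what dictates the careful two-stage choice above: first a clopen $A$ containing $C_2\cup\down y$ and avoiding $z$, then $D$ as the complement of a clopen upset that covers $X\setminus A$ while omitting $y$.
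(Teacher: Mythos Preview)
Your proof is correct, and part~(b) is essentially identical to the paper's argument (the paper decomposes $V\cap W$ rather than $W$, but since you already have $W\subseteq V$ this is the same computation).

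Part~(a) differs in organisation. The paper splits the case $C_2\subseteq C_1$ into two subcases according to whether the greatest elements $x_1,x_2$ of $C_1,C_2$ coincide: when $x_2<x_1$ it separates simply via $\Diamond U$ for a clopen upset $U$ with $x_1\in U$, $x_2\notin U$; when $x_1=x_2$ it locates $y\in C_1\setminus C_2$ with $\down y\cap C_2\neq\varnothing$, finds a clopen upset $U$ and clopen downset $D$ both omitting $y$ with $C_2\subseteq U\cup D$, and separates via the clopen downset $\Box(U\cup D)\cap\Diamond D$. You instead treat the whole case $C_2\subseteq C_1$ uniformly: pick any witness $y\in C_2$, $z\in C_1\setminus C_2$ with $y<z$, first enclose $C_2\cup\down y$ in a clopen $A$ missing $z$, then shrink to a clopen downset $D\subseteq A$ containing $y$, and separate via $\Box A\cap\Diamond D$. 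Your two-stage choice of $A$ then $D$ is what guarantees $D\subseteq A$, which is exactly the hypothesis of \cref{lem:Box and Diamond properties:item7}; this buys you a single construction covering both of the paper's subcases, at the cost of one extra compactness argument to produce $D$.
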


\begin{proof}
\cref{thm:CC(X) Stone,thm:CC(X) root system} yield that $\CC(X)$ is a Stone space and a root system. It remains to show that it is an Esakia space.
We first prove that $\CC(X)$ is a Priestley space. Let $C_1,C_2 \in \CC(X)$ with $C_1 \nlec C_2$. 
First, consider the case in which $C_2 \nsubseteq C_1$. Then there is $x \in C_2 \setminus C_1$. Since $X$ is a Stone space and $C_1$ is closed, we can find a clopen subset $V$ of $X$ such that $C_1 \subseteq V$ and $x \notin V$. Thus, $C_1 \in \Box V$ and $C_2 \notin \Box V$. By \cref{lem:Box and Diamond properties:item1}, $\Box V$ is a clopen upset of $\CC(X)$ containing $C_1$ and not $C_2$. Let us now assume that $C_2 \subseteq C_1$. By \cref{lem:CC in Pries min and max}, there exist the greatest elements $x_1$ and $x_2$ of $C_1$ and $C_2$, respectively. Since $C_2 \subseteq C_1$, it must be that $x_2 < x_1$ or $x_1 = x_2$. If $x_2 < x_1$, then there is a clopen upset $U$ of $X$ such that $x_1 \in U$ and $x_2 \notin U$ because $X$ is a Priestley space. Then $x_1 \in C_1 \cap U$ and $C_2 \cap U = \varnothing$ because $x_2$ is the greatest element of $C_2$ and $U$ is an upset. By \cref{lem:Box and Diamond properties:item5}, $\Diamond U$ is a clopen upset of $\CC(X)$ such that $C_1 \in \Diamond U$ and $C_2 \notin \Diamond U$. It then remains to consider the case in which $x_1=x_2$. Since $C_2 \subseteq C_1$ and $C_1 \nlec C_2$, there is $y \in C_1  \setminus  C_2$ such that $\down y \cap C_2 \neq \varnothing$. Then $x_2 \in \up y \cap C_2$ because $y \in C_1$ and $x_2$ coincides with the greatest element of $C_1$. Thus, $\down y \cap C_2$ and $\up y \cap C_2$ are nonempty closed chains in $X$. Let $z_1$ and $z_2$ be the greatest and least elements of $\down y \cap C_2$ and $\up y \cap C_2$, respectively. Since $y \notin C_2$, we have $y \nleq z_1$ and $z_2 \nleq y$. Because $X$ is a Priestley space, there is a clopen downset $D$ such that $z_1 \in D$ and $y \notin D$ and there is a clopen upset $U$ such that $z_2 \in U$ and $y \notin U$. Then $C_1 \notin \Box (U \cup D) \cap \Diamond D$ because $y \in C_1$ and $y \notin U \cup D$. 
Since $C_2$ is a chain and $z_1,z_2$ are the greatest and least elements of $\down y \cap C_2$ and $\up y \cap C_2$, we have that $C_2 \subseteq (\up y \cap C_2) \cup (\down y \cap C_2) \subseteq \up z_2 \cup \down z_1$. So,
$C_2 \in \Box (U \cup D) \cap \Diamond D$ because $C_2 \subseteq \up z_2 \cup \down z_1 \subseteq U \cup D$ and $z_1 \in C_2 \cap D$.
By \cref{lem:Box and Diamond properties:item7}, $\mathcal{U} \coloneqq \CC(X) \setminus (\Box (U \cup D) \cap \Diamond D)$ is a clopen upset of $\CC(X)$ such that $C_1 \in \mathcal{U}$ and $C_2 \notin \mathcal{U}$.
We have proved that  $\CC(X)$ is a Priestley space

To prove that $\CC(X)$ is an Esakia space, we need to show that the downset of any clopen subset of $\CC(X)$ is clopen. By \cref{lem:facts CC(X):item5}, any clopen of $\CC(X)$ can be written as a finite union of clopens of the form $\Box V \cap \Diamond W_1 \cap \cdots \cap \Diamond W_n$ with $V,W_1, \dots, W_n$ clopen subsets of $X$. Since taking the downset commutes with unions, it is enough to show that $\downc (\Box V \cap \Diamond W_1 \cap \dots \cap \Diamond W_n)$ is clopen for every $V,W_1, \dots, W_n$ clopens of $X$.
\cref{lem:Box and Diamond properties:item3} implies that it is sufficient to show that $\downc (\Box V \cap \Diamond W)$ is clopen for any $V,W$ clopens of $X$. By \cref{prop:clopen union of convex}, if $V$ and $W$ are clopens, then the clopen $V \cap W$ can be written as 
\begin{equation}\label{eq:V cap W = cup Ui cap Di}
V \cap W=(U_1 \cap D_1) \cup \dots \cup (U_m \cap D_m),
\end{equation}
where $U_i$ and $D_i$ are, respectively, a clopen upset and a clopen downset of $X$ for each $i$. 
We obtain
\begin{align*}
\Box V \cap \Diamond W &= \Box V \cap \Diamond (V \cap W) = \Box V \cap \Diamond ((U_1 \cap D_1) \cup \dots \cup (U_1 \cap D_m))\\
&= \Box V \cap (\Diamond (U_1 \cap D_1) \cup \dots \cup \Diamond (U_m \cap D_m))\\
&= (\Box V \cap \Diamond (U_1 \cap D_1)) \cup \dots \cup (\Box V \cap \Diamond (U_m \cap D_m)),
\end{align*}
where the first equality follows from the fact that $\Box A \cap \Diamond B = \Box A \cap \Diamond (A \cap B)$ for every $A,B \subseteq X$, the second from \cref{eq:V cap W = cup Ui cap Di}, the third from \cref{lem:facts CC(X):item1}, and the fourth is straightforward. Therefore,
\begin{align*}
\downc (\Box V \cap \Diamond W) = \downc (\Box V \cap \Diamond (U_1 \cap D_1)) \cup \dots \cup \downc (\Box V \cap \Diamond (U_m \cap D_m)).
\end{align*}
Since $U_i \cap D_i \subseteq V$ for every $i$, \cref{lem:Box and Diamond properties:item6} yields that 
\[
\downc (\Box V \cap \Diamond (U_i \cap D_i))= \Box (V \cup D_i) \cap \Diamond (U_i \cap D_i), 
\]
which is a clopen subset of $\CC(X)$. Thus, $\downc (\Box V \cap \Diamond W)$ is clopen. We have shown that the downset of any clopen of $\CC(X)$ is clopen, and hence that $\CC(X)$ is an Esakia space. 
\end{proof}

We have seen that $\CC(X)$ is an Esakia root system. Our next goal is to prove that its dual G\"odel algebra $\CC(X)^*$ is free over the distributive lattice $X^*$. This we achieve by showing that $\CC(X)$ satisfies the universal property dual to the universal property of free G\"odel algebras.
Let $m \colon \CC(X) \to X$ be the map that sends each $C \in \CC(X)$ to its least element, which exists by \cref{lem:CC in Pries min and max}.

\begin{lemma}\plabel{lem:minv of up and down}
\hfill\begin{enumerate}
\item\label[lem:minv of up and down]{lem:minv of up and down:item1} If $U \subseteq X$ is an upset, then $m^{-1}[U]=\Box U$.
\item\label[lem:minv of up and down]{lem:minv of up and down:item2} If $D \subseteq X$ is a downset, then $m^{-1}[D]=\Diamond D$.
\end{enumerate}
\end{lemma}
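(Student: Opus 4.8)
The plan is to unwind the definition of $m$ and exploit that, for any $C \in \CC(X)$, the element $m(C)=\min C$ is below every element of $C$; existence of this least element is guaranteed by \cref{lem:CC in Pries min and max}, which is what makes $m$ well defined in the first place. Both items are then immediate double inclusions.

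For \eqref{lem:minv of up and down:item1} I would argue as follows. If $C \in \Box U$, i.e.\ $C \subseteq U$, then in particular $m(C) = \min C \in C \subseteq U$, so $C \in m^{-1}[U]$. Conversely, suppose $C \in m^{-1}[U]$, so $\min C \in U$. Given any $x \in C$ we have $\min C \le x$, and since $U$ is an upset this forces $x \in U$; hence $C \subseteq U$, that is, $C \in \Box U$. This proves $m^{-1}[U] = \Box U$.

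For \eqref{lem:minv of up and down:item2} the cleanest route is to deduce it from \eqref{lem:minv of up and down:item1} by complementation: since $X \setminus D$ is an upset, \eqref{lem:minv of up and down:item1} gives $m^{-1}[X \setminus D] = \Box (X \setminus D)$, and \cref{lem:facts CC(X):item2} identifies $\Box (X \setminus D)$ with $\CC(X) \setminus \Diamond D$. Because $m^{-1}$ commutes with complements, taking complements in $\CC(X)$ yields $m^{-1}[D] = \Diamond D$. (A direct argument also works: $C \in \Diamond D$ iff some $x \in C$ lies in $D$, and as $\min C \le x$ and $D$ is a downset this is equivalent to $\min C \in D$, i.e.\ to $C \in m^{-1}[D]$.)

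There is no real obstacle here; the statement is essentially a reformulation of the upset/downset conditions via least elements. The only point worth stating explicitly is that $\min C$ exists and lies in $C$, which is exactly \cref{lem:CC in Pries min and max}, already used to define $m$.
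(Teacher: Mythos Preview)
Your proof is correct and essentially the same as the paper's: both arguments for \eqref{lem:minv of up and down:item1} unwind the definition via $C \subseteq \up m(C)$, and for \eqref{lem:minv of up and down:item2} the paper gives the direct argument you include parenthetically, while your primary route via complementation and \cref{lem:facts CC(X):item2} is an equally valid minor variant.
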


\begin{proof}
\eqref{lem:minv of up and down:item1}. Let $U$ be an upset of $X$ and $C \in \CC(X)$. Since $C \subseteq \up m(C)$, we have that $m(C) \in U$ iff $C \subseteq U$. Thus, $m^{-1}[U]=\Box U$.

\eqref{lem:minv of up and down:item2}. Let $D$ be a downset of $X$ and $C \in \CC(X)$. Since $m(C) \in \down x$ for every $x \in C$, we have that $m(C) \in D$ iff $C \cap D \neq \varnothing$. Thus, $m^{-1}[D]=\Diamond D$.
\end{proof}

\begin{lemma}\label{lem:m Priestley map}
The map $m \colon \CC(X) \to X$ is continuous and order preserving.
\end{lemma}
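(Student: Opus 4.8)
The plan is to verify the two properties in turn, with continuity being the only part that requires a moment's thought.

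For order preservation, I would take $C_1 \lec C_2$ in $\CC(X)$. By definition this gives $C_2 \subseteq C_1$, so the least element $m(C_2)$ of $C_2$ is an element of $C_1$. Since $m(C_1)$ is the least element of $C_1$ (it exists by \cref{lem:CC in Pries min and max}), we conclude $m(C_1) \le m(C_2)$, as required. Alternatively one could invoke \cref{lem:equivalent def lec} to write $C_2 = \up x \cap C_1$ for some $x \in C_1$, observe that $x$ is the least element of $\up x \cap C_1$ because $C_1$ is a chain, hence $m(C_2) = x$, and then note $x \ge m(C_1)$.

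For continuity, the key observation is that, because $X$ is a Priestley space, the Priestley separation axiom guarantees that the clopen upsets of $X$ together with their complements (the clopen downsets of $X$) form a subbasis for the topology of $X$. Hence it suffices to check that $m^{-1}[U]$ is open for every clopen upset $U$ and that $m^{-1}[D]$ is open for every clopen downset $D$. This is exactly what \cref{lem:minv of up and down:item1,lem:minv of up and down:item2} deliver: $m^{-1}[U] = \Box U$ and $m^{-1}[D] = \Diamond D$, and by \cref{lem:facts CC(X):item3} both $\Box U$ and $\Diamond D$ are clopen in $\CC(X)$ since $U$ and $D$ are clopen in $X$. Thus the preimage under $m$ of each subbasic open set is (cl)open, so $m$ is continuous.

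I do not anticipate any genuine obstacle here; the only point needing a little care is the reduction of continuity to these two cases via a convenient subbasis for the Priestley topology, after which \cref{lem:minv of up and down} and \cref{lem:facts CC(X):item3} close the argument immediately. It is worth noting that this argument in fact shows slightly more than continuity: the preimage under $m$ of a clopen upset (respectively clopen downset) of $X$ is not merely open but clopen in $\CC(X)$, a stronger fact that will be useful when we later analyze the dual morphism $m^{-1}$.
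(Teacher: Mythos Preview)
Your proof is correct and follows essentially the same approach as the paper: both arguments establish order preservation via the inclusion $C_2 \subseteq C_1$ and reduce continuity to checking $m^{-1}$ on clopen upsets and clopen downsets, then invoke \cref{lem:minv of up and down} and \cref{lem:facts CC(X):item3}. The only cosmetic difference is that the paper reaches this reduction via \cref{prop:clopen union of convex} (every clopen is a finite union of sets $U \cap D$), whereas you use the equivalent observation that clopen upsets and clopen downsets form a subbasis for the Priestley topology.
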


\begin{proof}
We first show that $m$ is continuous. Since $X$ is a Stone space, it is enough to prove that $m^{-1}[V]$ is clopen for each clopen subset $V$ of $X$. Since $m^{-1}$ commutes with unions and intersections, \cref{prop:clopen union of convex} implies that it is sufficient to show that $m^{-1}[U]$ and $m^{-1}[D]$ are clopen in $\CC(X)$ for each $U$ clopen upset and $D$ clopen downset of $X$. By \cref{lem:minv of up and down}, if $U$ is a clopen upset and $D$ a clopen downset, then $m^{-1}[U] = \Box U$ and $m^{-1}[D] = \Diamond D$, which are clopen in $\CC(X)$. Thus, $m$ is continuous.

It remains to show that $m$ is order preserving. Let $C_1,C_2 \in \CC(X)$ with $C_1 \lec C_2$. Then there is $x \in C_1$ such that $C_2 = \up x \cap C_1$. Thus, $m(C_2) = x \in C_1$, and hence $m(C_1) \le m(C_2)$. Therefore, $m$ is order preserving.
\end{proof}

Since every continuous map between Stone spaces is a closed map and the image of a chain under an order-preserving map is a chain, the following lemma is immediate.

\begin{lemma}\label{lem:image of chain is chain}
If $f \colon X_1 \to X_2$ is a continuous order-preserving map between Priestley spaces and $C \in \CC(X_1)$, then $f[C] \in \CC(X_2)$. 
\end{lemma}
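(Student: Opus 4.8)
The plan is to verify the three defining conditions for membership in $\CC(X_2)$: that $f[C]$ is nonempty, that it is a chain in $X_2$, and that it is closed in $X_2$. The first is trivial, since $C$ is nonempty by hypothesis and so is its image. The remaining two are the content of the two facts quoted just before the statement, and I would spell them out as follows.

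First I would check that $f[C]$ is a chain. Let $a,b \in f[C]$ and pick $x,y \in C$ with $f(x)=a$ and $f(y)=b$. Since $C \in \CC(X_1)$ is a chain, either $x \le y$ or $y \le x$; as $f$ is order-preserving, this gives $a=f(x) \le f(y)=b$ or $b \le a$. Hence any two elements of $f[C]$ are comparable, so $f[C]$ is totally ordered with respect to the order on $X_2$.

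Next I would check that $f[C]$ is closed. The space $X_1$ is a Priestley space, hence compact, so the closed subset $C$ is compact. Its continuous image $f[C]$ is therefore compact, and since $X_2$ is a Priestley space it is in particular Hausdorff, so the compact subset $f[C]$ is closed in $X_2$. (Equivalently, one may invoke that a continuous map between compact Hausdorff spaces is a closed map, so that $f[C]$, the image of the closed set $C$, is closed.) Combining the three observations yields $f[C] \in \CC(X_2)$.

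There is no real obstacle here: the argument is entirely routine, relying only on elementary point-set topology and on the fact that order-preserving maps send chains to chains. The only point worth a moment's care is recalling that Priestley spaces are compact Hausdorff, which is exactly what makes the continuous image of a closed set closed.
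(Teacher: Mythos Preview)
Your proof is correct and follows exactly the same approach as the paper, which simply declares the lemma immediate from the two facts that continuous maps between Stone spaces are closed maps and that order-preserving maps send chains to chains. You have merely spelled out these observations (together with nonemptiness) in detail.
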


We are finally ready to prove the universal property of $\CC(X)$.

\begin{theorem}\label{thm:univ prop CC(X)}
Let $Y$ be an Esakia root system and $f \colon Y \to X$ an order-preserving continuous map. Then there is a unique continuous p-morphism $g \colon Y \to \CC(X)$ such that $m \circ g = f$.
\[
\begin{tikzcd}[sep = large]
\CC(X) \arrow[d, "m"'] & Y \arrow[ld, "f"] \arrow[l, dashed, "\exists ! \,g"']\\
X & 
\end{tikzcd}
\]
\end{theorem}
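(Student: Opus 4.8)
The plan is to produce $g$ by an explicit formula, check in turn that it lands in $\CC(X)$, makes the triangle commute, is order preserving, is a p-morphism, and is continuous, and then obtain uniqueness by showing that every competitor is forced to be given by the same formula. The candidate is
\[
g(y) \coloneqq f[\up y],
\]
which is natural in view of the remark following the definition of $\lec$: under the identification $y \mapsto \up y$ the map $m$ sends $\up y$ to its least element $y$, so any $g$ over $f$ ought to send $y$ to (something built from) $f[\up y]$. Since $Y$ is an Esakia root system, $\up y$ is a nonempty closed chain of $Y$ (closed because up-sets of points are closed in any Priestley space, a chain because $Y$ is a root system), so \cref{lem:image of chain is chain} gives $g(y) \in \CC(X)$. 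As $f$ is order preserving and $y$ is the least element of $\up y$, the element $f(y)$ lies in $f[\up y]$ and is a lower bound for it, hence is its least element; so $m(g(y)) = f(y)$, i.e.\ $m \circ g = f$.

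Next I would verify the two order-theoretic properties, both of which reduce to comparability arguments inside the chains $\up y$. If $y_1 \le y_2$, then $\up y_2 \subseteq \up y_1$, so $f[\up y_2] \subseteq f[\up y_1]$; and if $f(z) \le f(w)$ with $z \ge y_2$ and $w \ge y_1$, then $z$ and $w$ both lie in the chain $\up y_1$, and a short case split on whether $w \le y_2$ or $y_2 \le w$ (using order-preservation of $f$ in the first case) shows $f(w) \in f[\up y_2]$; thus $f[\up y_2]$ is an upset of $f[\up y_1]$ and $g(y_1) \lec g(y_2)$. For the p-morphism property, order-preservation already gives $g[\up y] \subseteq \upc g(y)$; conversely, if $g(y) \lec \mathcal C$, then by \cref{lem:equivalent def lec} we have $\mathcal C = \up x \cap g(y)$ for some $x \in f[\up y]$, say $x = f(z)$ with $z \ge y$, and the same kind of comparability argument in $\up y$ yields $g(z) = f[\up z] = \up x \cap f[\up y] = \mathcal C$. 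Hence $g[\up y] = \upc g(y)$.

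For continuity it suffices, by \cref{lem:facts CC(X):item4}, to show that $g^{-1}[\Box V]$ and $g^{-1}[\Diamond V]$ are open for every clopen $V$ of $X$. A direct computation gives $g^{-1}[\Diamond V] = \{\, y : \up y \cap f^{-1}[V] \neq \varnothing \,\} = \down f^{-1}[V]$, and, using \cref{lem:facts CC(X):item2}, $g^{-1}[\Box V] = Y \setminus g^{-1}[\Diamond(X \setminus V)] = Y \setminus \down(Y \setminus f^{-1}[V])$; both sets are clopen because $f$ is continuous and $Y$, being an Esakia space, has the property that the downset of a clopen is clopen. Finally, for uniqueness let $g'$ be another continuous p-morphism with $m \circ g' = f$. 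Order-preservation of $g'$ forces $g'(z) \subseteq g'(y)$ whenever $z \ge y$, and $f(z) = m(g'(z)) \in g'(z) \subseteq g'(y)$, so $g(y) = f[\up y] \subseteq g'(y)$. Conversely, for $x \in g'(y)$ we have $g'(y) \lec \up x \cap g'(y)$ by \cref{lem:equivalent def lec}, so the p-morphism property of $g'$ produces $z \ge y$ with $g'(z) = \up x \cap g'(y)$, whose least element is $x$; hence $f(z) = m(g'(z)) = x$ and $x \in f[\up y] = g(y)$. Thus $g' = g$. I expect the main obstacle to be the two comparability arguments (order-preservation of $g$ and the lifting half of the p-morphism condition): these are the only genuinely non-formal steps and the only ones that exploit the root-system structure of $Y$, while continuity, though it uses that $Y$ is Esakia, is routine bookkeeping with the Vietoris subbasis.
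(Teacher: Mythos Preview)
Your proof is correct and follows essentially the same approach as the paper: the same explicit formula $g(y)=f[\up y]$, the same continuity computation via $g^{-1}[\Diamond V]=\down f^{-1}[V]$, and the same uniqueness argument. The only organizational difference is that the paper isolates the identity $f[\up y_2]=\up f(y_2)\cap f[\up y_1]$ (for $y_1\le y_2$) as a single lemma and invokes it for both order-preservation and the p-morphism lifting, whereas you carry out the underlying comparability case-split in $\up y_1$ separately in each place; the content is the same.
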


\begin{proof}
Define $g \colon Y \to \CC(X)$ by mapping $y \in Y$ to $f[\up y]$. Since $Y$ is an Esakia root system, $\up y$ is a closed chain in $Y$ for every $y \in Y$. By \cref{lem:image of chain is chain}, $f[\up y] \in \CC(X)$ because $f$ is continuous and order preserving. So, $g$ is well defined.

We show that $g$ is continuous. By \cref{lem:facts CC(X):item5}, the clopen subsets of $\CC(X)$ of the form $\Box V$ and $\Diamond V$, with $V$ clopen in $X$, form a subbasis for the topology on $\CC(X)$. So, it is sufficient to show that $g^{-1}[\Box V]$ and $g^{-1}[\Diamond V]$ are clopen for every clopen subset $V$ of $X$. The definitions of $g$ and $\Diamond V$ imply that for every $y \in Y$ we have
\begin{align*}
y \in g^{-1}[\Diamond V] & \iff g(y) \in \Diamond V \iff f[\up y] \in \Diamond V \iff f[\up y] \cap V \neq \varnothing\\
& \iff \up y \cap f^{-1}[V] \neq \varnothing \iff y \in \down f^{-1}[V]. 
\end{align*}
Thus, 
\begin{equation}\label{eq:ginv diamond = down finv}
g^{-1}[\Diamond V] = \down f^{-1}[V].
\end{equation}
The continuity of $f$ yields that $f^{-1}[V]$ is clopen, and hence $g^{-1}[\Diamond V]=\down f^{-1}[V]$ is clopen because $Y$ is an Esakia space. We also have
\begin{align*}
g^{-1}[\Box V] &= g^{-1}[\CC(X) \setminus \Diamond(X \setminus V)] = Y \setminus g^{-1}[\Diamond(X \setminus V)]\\
&= Y \setminus \down f^{-1}[X \setminus V] = Y \setminus \down (Y \setminus f^{-1}[V]),
\end{align*}
where the first equality follows from \cref{lem:facts CC(X):item2}, the third from \cref{eq:ginv diamond = down finv}, and the remaining are consequences of the fact that preimages commute with complements. Since $V$ is clopen, $f$ is continuous, and $Y$ is an Esakia space, we obtain that $g^{-1}[\Box V]$ is clopen. This shows that $g$ is continuous.

To show that $g$ is a p-morphism, we first need to prove that
\begin{equation}\label{eq:fupy2 = upfy2 cap fupy1}
f[\up y_2] = \up f(y_2) \cap f[\up y_1], \text{ for every } y_1,y_2 \in Y \text{ with } y_1 \le y_2.
\end{equation}
From $y_1 \le y_2$ it follows that $\up y_2 \subseteq \up y_1$, and so $f[\up y_2] \subseteq \up f(y_2) \cap f[\up y_1]$ because $f$ is order preserving. If $x \in \up f(y_2) \cap f[\up y_1]$, then $f(y_2) \le x$ and there is $y_3 \in \up y_1$ such that $f(y_3) = x$. Since $\up y_1$ is a chain and $y_2,y_3 \in \up y_1$, we have $y_2 \le y_3$ or $y_3 \le y_2$. If $y_2 \le y_3$, then $y_3 \in \up y_2$, and so $x=f(y_3) \in f[\up y_2]$. If $y_3 \le y_2$, then $x=f(y_3) \le f(y_2) \le x$, and hence $x=f(y_2) \in f[\up y_2]$. In either case, $x \in f[\up y_2]$. Thus, \cref{eq:fupy2 = upfy2 cap fupy1} holds. It immediately follows that $y_1 \le y_2$ implies $f[\up y_1] \lec f[\up y_2]$, and hence $g(y_1) \lec g(y_2)$. Therefore, $g$ is order preserving. 
Let $y \in Y$ and $C \in \CC(X)$ be such that $g(y) \lec C$. Then $C=\up x \cap g(y)$ for some $x \in g(y) = f[\up y]$. Thus, $x=f(z)$ with $y \le z$, and so $C=\up f(z) \cap f[\up y]$. By \cref{eq:fupy2 = upfy2 cap fupy1}, $C= f[\up z] = g(z)$. This shows that $g$ is a p-morphism. 

Since $f$ is order preserving, $m g(y)=m(f[\up y])=f(y)$ for every $y \in Y$. Thus, $m \circ g=f$. It remains to show the uniqueness of $g$. Let $h \colon Y \to \CC(X)$ be a continuous p-morphism such that $m \circ h = f$. We show that $h(y)=g(y)$ for every $y \in Y$. 
By the definition of $m$, we have that $h(y)$ is a closed chain in $X$ whose last element is $f(y)$.
To prove $h(y) \subseteq g(y)$, consider $x \in h(y)$ and let $C = \up x \cap h(y) \in \CC(X)$. Then $h(y) \lec C$. Since $h$ is a p-morphism, there is $z \in \up y$ such that $C=h(z)$. Therefore, $x=m(C)=mh(z)=f(z)$, and so $x \in f[\up y]=g(y)$. Thus, $h(y) \subseteq g(y)$. To show that $g(y) \subseteq h(y)$, let $x \in g(y)$. By the definition of $g$, we have $x \in f[\up y]$, and so $x=f(z)$ with $y \le z$. Since $h$ is order preserving, $h(y) \lec h(z)$, which implies $h(z) \subseteq h(y)$. From $f=m \circ h$ it follows that $x=f(z)=mh(z)$. Thus, $x$ is the least element of $h(z)$, and hence $x \in h(z) \subseteq h(y)$. Thus, $g(y) \subseteq h(y)$. We have shown that $g=h$. Therefore, $g$ is the unique continuous p-morphism such that $m \circ g=f$.
\end{proof}

\begin{example}\label{ex:X and CCX}
Let $X=\mathbb{N} \cup \{ \infty\}$ be equipped with the topology generated by the subbasis $\{ \{n\}, X \setminus \{n\} \mid n \in \mathbb{N}\}$. Then $X$ is the one-point compactification of $\mathbb{N}$ with the discrete topology, and $X$ becomes a Priestley space once ordered as follows: $x_1 \le x_2$ iff $x_1=x_2$ or $x_1=\infty$.
\cref{fig:example X CCX} depicts $X$ and $\CC(X)$. All the points of $\CC(X)$ are isolated except for $\{ \infty\}$ and the topology on $\CC(X)$ is the one-point compactification of $\CC(X) \setminus \{ \{\infty\}\}$. Note that the map $m \colon \CC(X) \to X$ sends $\{x\}$ to $x$ for every $x \in X$ and sends $\{ \infty, n \}$ to $\infty$ for every $n \in \mathbb{N}$.
\begin{figure}[!h]
\begin{tikzpicture}
	\begin{pgfonlayer}{nodelayer}
		\node [style=black dot] (1) at (-11.25, 1) {};
		\node [style=black dot] (2) at (-10.25, 1) {};
		\node [style=black dot] (3) at (-9.25, 1) {};
		\node [style=black dot] (4) at (-6.75, -0.5) {};
		\node [style=small black dot] (5) at (-8.425, 1) {};
		\node [style=small black dot] (6) at (-8.025, 1) {};
		\node [style=small black dot] (7) at (-7.7, 1) {};
		\node [style=none] (8) at (-11.25, 1.5) {$0$};
		\node [style=none] (9) at (-10.25, 1.5) {$1$};
		\node [style=none] (10) at (-9.25, 1.5) {$2$};
		\node [style=none] (11) at (-6.7, -1) {\small $\infty$};
		\node [style=none] (12) at (-9.125, -1.75) {\small $X$};
		\node [style=black dot] (13) at (-2.45, 1) {};
		\node [style=black dot] (14) at (-1.2, 1) {};
		\node [style=black dot] (15) at (0.05, 1) {};
		\node [style=none] (17) at (-2.45, 1.5) {\small $\{0\}$};
		\node [style=none] (18) at (-1.2, 1.5) {\small $\{1\}$};
		\node [style=none] (19) at (0.05, 1.5) {\small $\{2\}$};
		\node [style=black dot] (20) at (-2.45, -0.5) {};
		\node [style=black dot] (21) at (-1.2, -0.5) {};
		\node [style=black dot] (22) at (0.05, -0.5) {};
		\node [style=small black dot] (23) at (1.125, 0.25) {};
		\node [style=small black dot] (24) at (1.55, 0.25) {};
		\node [style=small black dot] (25) at (1.925, 0.25) {};
		\node [style=black dot] (26) at (2.9, 0.25) {};
		\node [style=none] (27) at (2.9, 0.75) {\small $\{\infty\}$};
		\node [style=none] (28) at (-2.425, -1) {\small $\{\infty,0\}$};
		\node [style=none] (29) at (-1.175, -1) {\small $\{\infty,1\}$};
		\node [style=none] (30) at (0.075, -1) {\small $\{\infty,2\}$};
		\node [style=none] (31) at (-0.625, -1.75) {\small $\CC(X)$};
	\end{pgfonlayer}
	\begin{pgfonlayer}{edgelayer}
		\draw (1) to (4);
		\draw (4) to (2);
		\draw (3) to (4);
		\draw (13) to (20);
		\draw (14) to (21);
		\draw (15) to (22);
	\end{pgfonlayer}
\end{tikzpicture}

\caption{The Priestley space $X$ and the Esakia root system $\CC(X)$.}\label{fig:example X CCX}
\end{figure}

\end{example}

\begin{remark}\label{rem:adjunctions}
A straightforward argument using \cref{thm:univ prop CC(X)} shows that each $\Pries$-morphism $f \colon X_1 \to X_2$ yields an $\ERS$-morphism $\CC(f) \colon \CC(X_1) \to \CC(X_2)$ mapping $C \in \CC(X_1)$ to ${f[C] \in \CC(X_2)}$. It is immediate to verify that $\CC \colon \Pries \to \ERS$ is a functor, and \cite[Thm.~IV.1.2]{Mac71} implies that $\CC$ is right adjoint to the inclusion $\ERS \hookrightarrow \Pries$. 
\end{remark}

We are now ready to state the main result of the section, which provides a concrete dual description of the G\"odel algebra free over a given distributive lattice. In a nutshell, the following theorem states that if $L$ is a distributive lattice dual to a Priestley space $X$, then $\CC(X)$ is the Esakia root system dual to the G\"odel algebra free over $L$.
Recall that if $L$ is a distributive lattice and $X=L_*$ its dual Priestley space, then $\sigma_L \colon L \to X^*$ is the isomorphism that sends $a \in L$ to $\{ P \in X \mid a \in P \} \in X^*$. In particular, $\sigma_L(a)$ is a clopen upset of $X$ for every $a \in L$.

\begin{theorem}\label{thm:free Godel over L}
Let $L$ be a distributive lattice and $X=L_*$ its dual Priestley space. The G\"odel algebra $\CC(X)^*$ is free over $L$ via the map $e \colon L \to \CC(X)^*$ given by $e(a)=\Box \sigma_L(a)$. 
\end{theorem}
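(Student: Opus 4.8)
The strategy is to deduce this theorem from the couniversal property of $\CC(X)$ established in \cref{thm:univ prop CC(X)} by transporting it through Priestley and Esakia dualities. First I would verify that $e$ is a well-defined $\DL$-morphism. Since $\sigma_L(a)$ is a clopen upset of $X$ for every $a \in L$, \cref{lem:minv of up and down:item1} gives $m^{-1}[\sigma_L(a)] = \Box \sigma_L(a)$, which is a clopen upset of $\CC(X)$ by \cref{lem:Box and Diamond properties:item1,lem:facts CC(X):item3}. Thus $e = m^{-1} \circ \sigma_L$, where $m^{-1} \colon X^* \to \CC(X)^*$ is the $\DL$-morphism induced by the $\Pries$-morphism $m$ of \cref{lem:m Priestley map} and $\sigma_L \colon L \to X^*$ is the Priestley isomorphism; being a composite of a lattice isomorphism with a bounded lattice homomorphism, $e$ is a bounded lattice homomorphism into $\CC(X)^*$, which is a G\"odel algebra by \cref{thm:CC in ERS} and Esakia duality for G\"odel algebras.

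Next I would fix a G\"odel algebra $H$ together with a lattice homomorphism $f \colon L \to H$ and set $Y \coloneqq H_*$, which is an Esakia root system. Under Priestley duality $f$ corresponds to the continuous order-preserving map $f_* \colon Y \to X$, and naturality of $\sigma$ yields $f = \sigma_H^{-1} \circ (f_*)^{-1} \circ \sigma_L$. On the dual side, Esakia duality for G\"odel algebras provides a bijection between Heyting homomorphisms $h \colon \CC(X)^* \to H$ and continuous p-morphisms $g \colon Y \to \CC(X)$, under which $h$ and $g$ correspond precisely when $h = \sigma_H^{-1} \circ g^{-1}$. For such a corresponding pair,
\[
h \circ e \;=\; \sigma_H^{-1} \circ g^{-1} \circ m^{-1} \circ \sigma_L \;=\; \sigma_H^{-1} \circ (m \circ g)^{-1} \circ \sigma_L,
\]
so, because $\sigma_L$ and $\sigma_H$ are isomorphisms and $(-)^* \colon \Pries \to \DL$ is faithful, the equation $h \circ e = f$ holds if and only if $m \circ g = f_*$. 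Hence the bijection above restricts to one between the Heyting homomorphisms $h \colon \CC(X)^* \to H$ with $h \circ e = f$ and the continuous p-morphisms $g \colon Y \to \CC(X)$ with $m \circ g = f_*$.

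To finish, I would apply \cref{thm:univ prop CC(X)} to the Esakia root system $Y$ and the order-preserving continuous map $f_* \colon Y \to X$: there is a unique continuous p-morphism $g \colon Y \to \CC(X)$ with $m \circ g = f_*$. Transporting this along the bijection just described yields a unique Heyting homomorphism $h \colon \CC(X)^* \to H$ with $h \circ e = f$ (and $h$ is automatically a $\GA$-morphism, as $\GA$ is a full subcategory of $\HA$), which is exactly the universal property defining freeness of $\CC(X)^*$ over $L$ via $e$. I expect the only delicate point to be the dualization bookkeeping --- getting the variances right and checking that $h \circ e = f$ translates into $m \circ g = f_*$; once that is in place, the statement follows immediately from \cref{thm:univ prop CC(X)}, which carries the real content.
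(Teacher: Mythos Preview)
Your proposal is correct and follows essentially the same approach as the paper: both deduce the freeness of $\CC(X)^*$ over $L$ by transporting the couniversal property of $m \colon \CC(X) \to X$ from \cref{thm:univ prop CC(X)} across the Priestley/Esakia dual equivalence, and both identify $e$ with $m^* \circ \sigma_L$ via \cref{lem:minv of up and down:item1}. The only difference is presentational: the paper invokes the dual equivalence at a higher level of abstraction (arguing that the universal property of $m$ in $\Pries$ immediately yields the universal property of $m^*$ in $\DL$, then precomposes with $\sigma_L$), whereas you unfold the bijection $h \leftrightarrow g$ explicitly and check by hand that $h \circ e = f$ translates to $m \circ g = f_*$.
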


\begin{proof}
As we recalled in \cref{sec:prelim Priestley Esakia}, $(-)^* \colon \Pries \to \DL$ is a dual equivalence that restricts to a dual equivalence between $\ERS$ and $\HA$. It then follows from \cref{thm:univ prop CC(X)} that for every $H \in \GA$ and lattice homomorphism $f \colon X^* \to H$ there is a unique Heyting homomorphism $g \colon \CC(X)^* \to H$ such that $g \circ m^* = f$. Therefore, the G\"odel algebra $\CC(X)^*$ is free over the distributive lattice $X^*$ via the map $m^* \colon X^* \to \CC(X)^*$. Since $\sigma_L \colon L \to X^*$ is a lattice isomorphism, it is straightforward to verify that $\CC(X)^*$ is free over $L$ via the map $m^* \circ \sigma_L$. It remains to observe that $m^* \circ \sigma_L=e$. Since $\sigma_L(a)$ is a clopen upset of $X$ and $m^*$ is the inverse image under $m$,  \cref{lem:minv of up and down:item1} yields that $m^*(\sigma_L(a))=m^{-1}[\sigma_L(a)]=\Box \sigma_L(a) = e(a)$.
\end{proof}

\cref{thm:free Godel over L} directly generalizes the dual description of G\"odel algebras free over finite distributive lattices due to Aguzzoli, Gerla, and Marra by removing any restriction on the cardinality of the distributive lattices. Indeed, when $L$ is a finite distributive lattice, the statement of \cref{thm:free Godel over L} essentially coincides with \cite[Thm.~I]{AGM08}. In their setting, both $X$ and $\CC(X)$ are finite, so the topology does not play any role in their considerations, while it is fundamental when we deal with the infinite case.

We end the section with a dual description of free G\"odel algebras. 
Recall that a G\"odel algebra $G$ is said to be free over a set $S$ via a function $q \colon S \to G$ if for every G\"odel algebra $H$ and function $f \colon G \to H$ there is a unique Heyting homomorphism $g \colon G \to H$ such that $g \circ q = f$. A free distributive lattice over a set $S$ is defined similarly.
We will exploit the fact that the G\"odel algebra free over a set $S$ is isomorphic to the G\"odel algebra free over the distributive lattice that is free over $S$.
For this reason, we first recall the description of the Priestley spaces dual to free distributive lattices.

\begin{definition}
We denote by $\two$ the Priestley space consisting of the $2$-element chain $\{0 < 1\}$ with the discrete topology. For a set $S$, let $\two^S$ denote the set of all $S$-indexed sequences $(a_i)_{i \in S}$ of elements of $\two$. Then $\two^S$ becomes a Priestley space once equipped with the product topology and componentwise order; i.e., $(a_i)_{i \in S} \le (b_i)_{i \in S}$ iff $a_i \le b_i$ for every $i \in S$.
\end{definition}
Let $q \colon S \to (\two^S)^*$ be the map sending each $s \in S$ to the clopen upset $U_s \coloneqq \{(a_i)_{i \in S} \mid a_s=1\}$ of $\two^S$. The following fact is well known (see, e.g., \cite[Prop.~4.8]{GvG24}). 

\begin{proposition}\label{prop:free dist latt}
Let $S$ be a set. Then the distributive lattice $(\two^S)^*$ is free over $S$ via the map $q \colon S \to (\two^S)^*$.
\end{proposition}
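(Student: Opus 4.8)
The plan is to verify the universal property of $q$ by transporting it through Priestley duality, where it becomes the elementary universal property of the product $\two^S$ in $\Pries$. Fix a distributive lattice $M$ and a function $f\colon S\to M$; the task is to produce a unique $\DL$-morphism $g\colon(\two^S)^*\to M$ with $g\circ q=f$. By Priestley duality the assignment $g\mapsto g^{-1}$ is a bijection between $\DL$-morphisms $(\two^S)^*\to M$ and $\Pries$-morphisms $M_*\to((\two^S)^*)_*$, and post-composing with $\varepsilon_{\two^S}^{-1}\colon((\two^S)^*)_*\to\two^S$ turns this into a bijection between such $g$ and $\Pries$-morphisms $M_*\to\two^S$.

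Next I would invoke that $\two^S$, with the product topology and componentwise order as in the definition above, is the categorical product of $S$ copies of $\two$ in $\Pries$: it is a Priestley space, and the coordinate projections $\pi_s\colon\two^S\to\two$ are $\Pries$-morphisms with the usual universal property (a cone from a Priestley space $Z$ to the family of $\two$'s is exactly a continuous order-preserving map $Z\to\two^S$). Hence $\Pries$-morphisms $M_*\to\two^S$ correspond bijectively to $S$-indexed families $(\varphi_s)_{s\in S}$ of $\Pries$-morphisms $M_*\to\two$. Finally, a $\Pries$-morphism $\varphi\colon M_*\to\two$ amounts to a single clopen upset of $M_*$, namely $\varphi^{-1}(\{1\})$: this set is clopen because $\{1\}$ is clopen in $\two$, and an upset because $\{1\}$ is an upset of $\two$ and $\varphi$ is order-preserving; conversely the characteristic function of any clopen upset is a $\Pries$-morphism to $\two$. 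Thus $\Pries$-morphisms $M_*\to\two$ are in bijection with $(M_*)^*$, hence with $M$ via $\sigma_M^{-1}$. Stringing the bijections together gives a bijection between $\DL$-morphisms $(\two^S)^*\to M$ and functions $S\to M$.

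It then remains to check that this composite bijection is precisely $g\mapsto g\circ q$; equivalently, that under it $g$ corresponds to the family $\bigl(g(U_s)\bigr)_{s\in S}$, since $q(s)=U_s$. This is the step carrying all the bookkeeping: one tracks the generator $\{1\}$ of $\two^*$ through the contravariant equivalences. Concretely, if $\psi\colon M_*\to\two^S$ is the $\Pries$-morphism corresponding to $g$, then the clopen upset of $M_*$ attached to the $s$-th coordinate $\pi_s\circ\psi$ is $\psi^{-1}(\pi_s^{-1}(\{1\}))=\psi^{-1}(U_s)$, and a diagram chase with the naturality isomorphisms $\sigma$ and $\varepsilon$ identifies $\sigma_M^{-1}(\psi^{-1}(U_s))$ with $g(U_s)=g(q(s))$. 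Once the composite bijection is identified with restriction along $q$, the existence and uniqueness of $g$ with $g\circ q=f$ are immediate, which is the assertion.

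The main obstacle is organizational rather than mathematical: handling the several contravariant dualities together with the identification $\Pries(M_*,\two)\cong M$ without slips, and pinning down the compatibility with $q$ at the end. A completely elementary alternative avoids duality altogether: a short compactness argument shows that every clopen upset of $\two^S$ is a finite union of sets of the form $\bigcap_{s\in F}U_s$ with $F\subseteq S$ finite (the empty intersection being $\two^S$), so $\{U_s\mid s\in S\}$ generates $(\two^S)^*$ as a bounded distributive lattice; one then checks that $\bigcup_j\bigcap_{s\in F_j}U_s\subseteq\bigcup_l\bigcap_{s\in G_l}U_s$ holds iff every $F_j$ contains some $G_l$, which makes $g\bigl(\bigcup_j\bigcap_{s\in F_j}U_s\bigr)\coloneqq\bigvee_j\bigwedge_{s\in F_j}f(s)$ a well-defined bounded lattice homomorphism, necessarily the unique one satisfying $g\circ q=f$.
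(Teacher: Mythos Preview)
Your argument is correct. Both routes you sketch---the duality-based one (identifying $\DL((\two^S)^*,M)$ with $\Pries(M_*,\two^S)\cong\prod_{s\in S}\Pries(M_*,\two)\cong M^S$ and then checking compatibility with $q$) and the elementary one (showing that the $U_s$ generate $(\two^S)^*$ and that inclusions among finite joins of finite meets of the $U_s$ are governed by the combinatorics of the index sets)---are valid, and the bookkeeping you flag is indeed routine naturality of $\sigma$ and $\varepsilon$.

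There is nothing to compare against: the paper does not prove this proposition at all but cites it as well known (referring to \cite[Prop.~4.8]{GvG24}). Your proposal therefore supplies a proof where the paper gives none. Of your two approaches, the duality-based one is closer in spirit to how the result is typically presented in the reference the paper cites, while your elementary alternative is self-contained and avoids the contravariance bookkeeping you rightly identify as the main source of potential slips.
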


The following theorem provides a dual description of free G\"odel algebras and  states that the G\"odel algebra free over a set $S$ is dual to the Esakia root system $\CC(\two^S)$.

\begin{theorem}\label{thm:free Godel over S}
Let $S$ be a set. Then the G\"odel algebra $\CC(\two^S)^*$ is free over $S$ via the map $r \colon S \to \CC(\two^S)^*$ given by $r(s)=\Box U_s$.
\end{theorem}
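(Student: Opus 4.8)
The plan is to deduce \cref{thm:free Godel over S} from \cref{thm:free Godel over L} by routing through the free distributive lattice over $S$, using the standard fact that a composite of free constructions is again free. Concretely, the general principle I will use is: if a distributive lattice $L$ is free over a set $S$ via $q \colon S \to L$ and a G\"odel algebra $G$ is free over $L$ via a lattice homomorphism $e \colon L \to G$, then $G$ is free over $S$ via $e \circ q \colon S \to G$. This is immediate from the universal properties: given a G\"odel algebra $H$ and a function $f \colon S \to H$, freeness of $L$ over $S$ gives a unique lattice homomorphism $f' \colon L \to H$ with $f' \circ q = f$, and then freeness of $G$ over $L$ gives a unique Heyting homomorphism $g \colon G \to H$ with $g \circ e = f'$; this $g$ satisfies $g \circ (e \circ q) = f$, and if $g'$ is any Heyting homomorphism with $g' \circ e \circ q = f$ then $g' \circ e$ is a lattice homomorphism extending $f$ along $q$, so $g' \circ e = f'$ by uniqueness of $f'$, whence $g' = g$ by uniqueness of $g$. (Equivalently, the free G\"odel algebra functor on $\mathsf{Set}$ is the composite of the free distributive lattice functor $\mathsf{Set}\to\DL$ with the free G\"odel algebra functor $\DL\to\GA$ whose existence on objects is \cref{thm:free Godel over L}.)

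First I would set $L \coloneqq (\two^S)^*$, which by \cref{prop:free dist latt} is free over $S$ via $q$. By Priestley duality the map $\varepsilon_{\two^S} \colon \two^S \to L_*$ is an isomorphism of Priestley spaces, so we may take $\two^S$ to be the Priestley dual of $L$; since $\CC$ is a functor (\cref{rem:adjunctions}), this identifies $\CC(L_*)^*$ with $\CC(\two^S)^*$. Applying \cref{thm:free Godel over L} to $L$ and $X = \two^S$ then shows that the G\"odel algebra $\CC(\two^S)^*$ is free over $L$ via the map $e \colon L \to \CC(\two^S)^*$, $e(a) = \Box \sigma_L(a)$, where $\sigma_L \colon L \to (\two^S)^*$ is read through the identification $L_* = \two^S$. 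By the general principle above, $\CC(\two^S)^*$ is therefore free over $S$ via $e \circ q$.

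It remains to verify that $e \circ q = r$, which is the only bookkeeping step. For $s \in S$ we have $q(s) = U_s \in L$, and $\sigma_L(U_s) = \{P \in L_* \mid U_s \in P\}$; under $\varepsilon_{\two^S}$, a point $x \in \two^S$ corresponds to the prime filter $\{V \in (\two^S)^* \mid x \in V\}$ of $L$, so $\varepsilon_{\two^S}^{-1}[\sigma_L(U_s)] = \{x \in \two^S \mid x \in U_s\} = U_s$. Hence, after the identification, $\sigma_L(U_s) = U_s$ and $e(q(s)) = \Box \sigma_L(U_s) = \Box U_s = r(s)$, as required. No genuine difficulty arises here: the substance of the statement is already contained in \cref{thm:free Godel over L} and \cref{prop:free dist latt}, and the only points needing (minor) care are the identification of the Priestley dual of $(\two^S)^*$ with $\two^S$ and the tracking of $\sigma_L$ through it.
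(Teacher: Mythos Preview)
Your proposal is correct and follows essentially the same route as the paper: both proofs invoke \cref{prop:free dist latt}, apply \cref{thm:free Godel over L}, use the composite-of-free-constructions principle, and then track the map through the unit isomorphism $\varepsilon_{\two^S}$ to identify $e \circ q$ with $r$. The only cosmetic difference is that the paper works with $X = ((\two^S)^*)_*$ and transfers along the explicit isomorphism $\CC(\varepsilon)^*$, whereas you make the identification $L_* \cong \two^S$ up front; the underlying computation (your $\varepsilon_{\two^S}^{-1}[\sigma_L(U_s)] = U_s$ versus the paper's triangle identity $\varepsilon^* \circ \sigma = \id$) is the same.
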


\begin{proof}
By \cref{prop:free dist latt}, the distributive lattice $(\two^S)^*$ is free over $S$ via the map $q$. Let $X=((\two^S)^*)_*$ be the double dual of $\two^S$. Then it is straightforward to verify that the G\"odel algebra $\CC(X)^*$ is free over $S$ via the map $e \circ q$, where $q \colon S \to (\two^S)^*$ and $e \colon (\two^S)^* \to \CC(X)^*$ are the maps appearing in \cref{prop:free dist latt,thm:free Godel over L}. To show that $\CC(\two^S)^*$ is free over $S$ via $r$, it is then sufficient to exhibit an isomorphism of G\"odel algebras $\varphi \colon \CC(X)^* \to \CC(\two^S)^*$ such that $\varphi \circ (e \circ q) = r$. Let $\varepsilon \colon \two^S \to X$ be the isomorphism of Priestley spaces described in \cref{sec:prelim Priestley Esakia}, where we omitted the subscript $2^S$ from $\varepsilon$ for ease of readability. Define $\varphi$ to be $\CC(\varepsilon)^*$. Since $\varepsilon$ is an isomorphism in $\Pries$, and $\CC$ and $(-)^*$ are functors, it follows that $\varphi$ is an isomorphism in $\ERS$. It then remains to show that $\varphi \circ (e \circ q) = r$. The definitions of $e$ and $q$ imply that $eq(s)=\Box \sigma q(s) = \Box \sigma (U_s)$ for every $s \in S$, where we omitted the subscript $(\two^S)^*$ from the isomorphism $\sigma_{(\two^S)^*} \colon (\two^S)^* \to X^*$. If $s \in S$, then
\begin{align*}
\varphi (eq(s)) & = \varphi(\Box \sigma q(s)) = \CC(\varepsilon)^{-1}(\Box \sigma q(s)) = \{C \in \CC(\two^S) \mid \varepsilon[C] \in \Box \sigma q(s) \}\\
& = \{C \in \CC(\two^S) \mid \varepsilon[C] \subseteq \sigma q(s) \} = \{C \in \CC(\two^S) \mid C \subseteq \varepsilon^{-1} [\sigma q(s)] \}\\
& = \{C \in \CC(\two^S) \mid C \subseteq \varepsilon^* \sigma q(s)] \} = \{C \in \CC(\two^S) \mid C \subseteq q(s) \}\\
& = \Box q(s) = r(s).
\end{align*}
In the above display, the first three, the sixth, and the last equalities follow from the definitions of $e$, $\varphi$, $\CC(\varepsilon)$, $\varepsilon^*$, and $r$, respectively. The fourth and the eighth equalities are a consequence of the definition of $\Box A$ for a subset $A$ of a Priestley space. The fifth equality is straightforward, and the seventh is an instance of the triangle identity $\varepsilon^* \circ \sigma=\text{id}_{(\two^S)^*}$, where $\text{id}_{(\two^S)^*}$ is the identity on $(\two^S)^*$ (see, e.g., \cite[Thm.~IV.1.1(ii)]{Mac71}). Therefore, $\varphi \circ (e \circ q) = r$, and this concludes the proof.
\end{proof}

\begin{remark}
Let $L$ be a distributive lattice. In general, it is extremely difficult to provide a tangible description of the Esakia dual $H_*$ of the Heyting algebra $H$ free over $L$. Our results allow to better understand a part of $H_*$. Indeed, since $\GA$ is a subvariety of $\HA$, the G\"odel algebra $G$ free over $L$ is isomorphic to a quotient of $H$. This dually correspond to the fact that the Esakia root system $G_*$ is isomorphic to the closed upset of $H_*$ given by $\{ y \in H_* \mid \up y \text{ is a chain}\}$. It then follows from \cref{thm:free Godel over L} that such a closed upset of $H_*$ is an Esakia root system isomorphic to $\CC(L_*)$.
Moreover, if $H$ is the Heyting algebra free over a set $S$, then it follows from \cref{thm:free Godel over S} that $\{ y \in H_* \mid \up y \text{ is a chain}\}$ is an Esakia root system isomorphic to $\CC(\two^S)$.
\end{remark}

We end the section with the analogues of \cref{thm:free Godel over L,thm:free Godel over S}, which provide a dual description of free algebras in $\GA_n$ for every $n \in \mathbb{N}$.

\begin{definition}
For $n \in \mathbb{N}$, we denote by $\CC_n(X)$ the subset of $\CC(X)$ consisting of all the points of $\CC(X)$ of depth less or equal to $n$. By \cref{prop:ERSn to ER right adjoint}, $\CC_n(X)$ is an Esakia root system of depth less or equal to $n$ with the order and topology induced by $\CC(X)$.
\end{definition}

The following proposition states that the elements of $\CC_n(X)$ are exactly all nonempty chains of $X$ of size (i.e., cardinality) at most $n$.

\begin{proposition}\label{prop:depth of chains}
Let $C$ be a nonempty subset of $X$. Then $C \in \CC_n(X)$ iff $C$ is a chain of size less or equal to $n$. 
\end{proposition}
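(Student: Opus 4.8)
The plan is to compute the depth of an arbitrary point $C \in \CC(X)$ directly in terms of the cardinality of $C$. Recall that, by definition, the depth $d(C)$ is the cardinality of $\upc C = \{ C' \in \CC(X) \mid C \lec C'\}$ when this set is finite, and $\infty$ otherwise. So the heart of the matter is to produce a bijection between $\upc C$ and $C$.

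First I would invoke \cref{lem:equivalent def lec}: we have $C \lec C'$ iff $C' = \up x \cap C$ for some $x \in C$. Hence the map $\mu_C \colon C \to \upc C$ defined by $\mu_C(x) = \up x \cap C$ is well defined and surjective. For injectivity, suppose $\up x \cap C = \up y \cap C$ with $x,y \in C$. Since $C$ is a chain we may assume $x \le y$; but $x \in \up x \cap C = \up y \cap C$ forces $y \le x$, so $x = y$. Thus $\mu_C$ is a bijection and $\lvert \upc C \rvert = \lvert C \rvert$. Consequently $d(C) = \lvert C \rvert$ when $C$ is finite, and $d(C) = \infty$ when $C$ is infinite.

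With this in hand, both implications are immediate. If $C \in \CC_n(X)$, then $C$ is a (closed) chain of $X$ with $d(C) \le n$, so by the computation $C$ is finite and $\lvert C \rvert = d(C) \le n$. Conversely, suppose $C$ is a nonempty chain of $X$ with $\lvert C \rvert \le n$. Then $C$ is finite, and since $X$ is a Priestley space and hence Hausdorff, every finite subset of $X$ is closed; thus $C$ is a nonempty closed chain, i.e., $C \in \CC(X)$. By the computation again, $d(C) = \lvert C \rvert \le n$, so $C \in \CC_n(X)$. (Note the case $n = 0$ is handled vacuously: no nonempty set has size $\le 0$, and $\CC_0(X) = \varnothing$ because $C \in \upc C$ always.)

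I do not anticipate any serious obstacle here; the only two points that need a moment of care are the use of \cref{lem:equivalent def lec} to parametrize $\upc C$ by the elements of $C$, and the observation that a finite chain of $X$ is automatically closed, so that such chains genuinely belong to $\CC(X)$.
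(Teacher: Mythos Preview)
Your proof is correct and follows essentially the same approach as the paper: both arguments use \cref{lem:equivalent def lec} to identify $\upc C$ with $\{\up x \cap C \mid x \in C\}$, establish a bijection between this set and $C$, and note that finite chains are automatically closed since $X$ is Hausdorff. Your version simply spells out the injectivity argument and the two implications more explicitly than the paper does.
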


\begin{proof} 
Note that, since every finite subset of a Stone space is closed, every finite chain of $X$ is closed in $X$.
\cref{lem:equivalent def lec} yields that $\upc C = \{\up x \cap C \mid x \in C\}$, which is a set in bijection with $C$. Thus, $C$ has depth $n$ iff it has size $n$.
\end{proof}

The following theorems show that if $L$ is a distributive lattice dual to $X=L_*$ and $S$ is a set, then $\CC_n(X)$ and $\CC_n(\mathsf{2}^S)$ are the $\GA_n$-algebras free over $L$ and $S$. Their proof is a straightforward adaptation of the proofs of \cref{thm:free Godel over L,thm:free Godel over S}.

\begin{theorem}\label{thm:free GAn over L}
Let $L$ be a distributive lattice and $X=L_*$. The $\GA_n$-algebra $\CC_n(X)^*$ is free over $L$ via the map $e_n \colon L \to \CC_n(X)^*$ given by $e_n(a)=\Box \sigma_L(a) \cap \CC_n(X)$. 
\end{theorem}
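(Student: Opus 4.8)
The plan is to follow the proof of \cref{thm:free Godel over L} essentially verbatim, replacing the universal property \cref{thm:univ prop CC(X)} by its analogue for $\CC_n(X)$ inside $\ERS_n$. Concretely, the first step is to establish the following claim: for every $Y \in \ERS_n$ and every order-preserving continuous map $f \colon Y \to X$ there is a unique continuous p-morphism $g \colon Y \to \CC_n(X)$ with $m_n \circ g = f$, where $m_n \colon \CC_n(X) \to X$ denotes the restriction of $m$. Everything else then runs as in \cref{thm:free Godel over L}: invoking the restriction of Esakia duality to $\GA_n$ recalled in \cref{sec:prelim Priestley Esakia}, the claim says exactly that $\CC_n(X)^*$ is free over the distributive lattice $X^*$ via $m_n^*$, and precomposing with the isomorphism $\sigma_L \colon L \to X^*$ shows it is free over $L$ via $m_n^* \circ \sigma_L$. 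Finally, since $m_n$ is a restriction of $m$, for any clopen upset $U$ of $X$ we have $m_n^{-1}[U] = m^{-1}[U] \cap \CC_n(X)$, so \cref{lem:minv of up and down:item1} gives $m_n^*(\sigma_L(a)) = \Box \sigma_L(a) \cap \CC_n(X) = e_n(a)$, identifying $m_n^* \circ \sigma_L$ with $e_n$.

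For the claim itself, the natural route is to reduce to \cref{thm:univ prop CC(X)} using that $(-)_n$ is right adjoint to $\ERS_n \hookrightarrow \ERS$ (\cref{prop:ERSn to ER right adjoint}); in particular $\CC_n(X) = \CC(X)_n$ is a closed upset of $\CC(X)$ and an Esakia root system of depth $\le n$. Given $f \colon Y \to X$ with $Y \in \ERS_n$, \cref{thm:univ prop CC(X)} produces the unique continuous p-morphism $g \colon Y \to \CC(X)$ with $m \circ g = f$. Because $g$ is a p-morphism, $\upc g(y) = g[\up y]$ for each $y \in Y$, and since $d(y) \le n$ this set has at most $n$ elements; hence $d(g(y)) \le n$, so $g(y) \in \CC_n(X)$. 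Thus $g$ corestricts to a map $Y \to \CC_n(X)$, which is still continuous (subspace topology) and still a p-morphism because $\CC_n(X)$ is a $\lec$-upset of $\CC(X)$ --- this is precisely the argument appearing in the proof of \cref{prop:ERSn to ER right adjoint}. Uniqueness is inherited from \cref{thm:univ prop CC(X)}: any two continuous p-morphisms $Y \to \CC_n(X)$ compatible with $m_n$ become, after composing with the inclusion $\CC_n(X) \hookrightarrow \CC(X)$, two continuous p-morphisms $Y \to \CC(X)$ compatible with $m$, hence equal, and the inclusion is injective.

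I do not anticipate a genuine obstacle: the only nontrivial inputs --- that $\CC_n(X)$ is an Esakia root system, a closed upset of $\CC(X)$, of depth $\le n$ --- are already recorded (via \cref{prop:ERSn to ER right adjoint}) immediately after the definition of $\CC_n(X)$, and the universal property transfers along the right adjoint $(-)_n$ without new ideas. The one point deserving a line of care is checking that the corestriction of $g$ remains a p-morphism, which uses nothing beyond $\CC_n(X)$ being $\lec$-upward closed in $\CC(X)$.
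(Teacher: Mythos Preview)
Your proposal is correct and is exactly the ``straightforward adaptation'' the paper has in mind: you reproduce the proof of \cref{thm:free Godel over L} with \cref{thm:univ prop CC(X)} replaced by its $\ERS_n$-version, which you obtain by corestricting the map $g$ from \cref{thm:univ prop CC(X)} using the right-adjoint machinery of \cref{prop:ERSn to ER right adjoint}. The only detail worth watching---that the corestriction to the closed upset $\CC_n(X)$ remains a continuous p-morphism---you have handled correctly.
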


\begin{theorem}\label{thm:free GAn over set}
Let $S$ be a set. Then the $\GA_n$-algebra $\CC_n(\two^S)^*$ is free over $S$ via the map $r_n \colon S \to \CC_n(\two^S)^*$ given by $r_n(s)=\Box U_s \cap \CC_n(\two^S)$.
\end{theorem}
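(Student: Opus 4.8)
The plan is to mimic the proof of \cref{thm:free Godel over S}, carrying the restriction to depth at most $n$ through every step. By \cref{prop:free dist latt} the distributive lattice $(\two^S)^*$ is free over $S$ via $q$. Writing $X = ((\two^S)^*)_*$ for the double Priestley dual of $\two^S$ and applying \cref{thm:free GAn over L} with $L = (\two^S)^*$, the $\GA_n$-algebra $\CC_n(X)^*$ is free over $(\two^S)^*$ via $e_n$; composing the two universal properties then shows that $\CC_n(X)^*$ is free over $S$ via $e_n \circ q$. So it remains to transport this freeness along a $\GA_n$-isomorphism from $\CC_n(X)^*$ to $\CC_n(\two^S)^*$ that is compatible with the generators.

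For this, let $\varepsilon \colon \two^S \to X$ be the Priestley isomorphism $\varepsilon_{\two^S}$ recalled in \cref{sec:prelim Priestley Esakia}. By \cref{rem:adjunctions} it induces an $\ERS$-isomorphism $\CC(\varepsilon) \colon \CC(\two^S) \to \CC(X)$ sending $C$ to $\varepsilon[C]$. Since $\CC(\varepsilon)$ is a bijective p-morphism between root systems it preserves depth, so it restricts to a bijection $\CC_n(\two^S) \to \CC_n(X)$; more precisely, applying the functor $(-)_n$ of \cref{prop:ERSn to ER right adjoint} to the isomorphism $\CC(\varepsilon)$ yields an $\ERS_n$-isomorphism, which I denote $\CC_n(\varepsilon)$ and which still acts by $C \mapsto \varepsilon[C]$. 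Taking Esakia duals, $\varphi \coloneqq \CC_n(\varepsilon)^* \colon \CC_n(X)^* \to \CC_n(\two^S)^*$ is an isomorphism of $\GA_n$-algebras, and it suffices to check that $\varphi \circ (e_n \circ q) = r_n$.

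The verification of $\varphi(e_n q(s)) = r_n(s)$ for $s \in S$ is then the same chain of equalities as in the proof of \cref{thm:free Godel over S}: unwind $e_n q(s) = \Box \sigma(U_s) \cap \CC_n(X)$ and $\varphi = \CC_n(\varepsilon)^{-1}$, use that $\CC_n(\varepsilon)(C) = \varepsilon[C]$ and that $\varepsilon[C] \in \CC_n(X)$ automatically, pull the containment $\varepsilon[C] \subseteq \sigma(U_s)$ back to $C \subseteq \varepsilon^{-1}[\sigma(U_s)] = \varepsilon^*(\sigma(U_s))$, and apply the triangle identity $\varepsilon^* \circ \sigma = \id_{(\two^S)^*}$ to reach $\{C \in \CC_n(\two^S) \mid C \subseteq U_s\} = \Box U_s \cap \CC_n(\two^S) = r_n(s)$. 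This yields the universal property of $\CC_n(\two^S)^*$ over $S$ via $r_n$. (The companion \cref{thm:free GAn over L} is obtained in the same spirit by copying the proof of \cref{thm:free Godel over L} and intersecting each set with $\CC_n(X)$, noting that $m$ restricts to a continuous order-preserving map $\CC_n(X) \to X$ and combining \cref{thm:univ prop CC(X)} with \cref{prop:ERSn to ER right adjoint}.)

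I expect the only point that is not purely mechanical to be the claim that the isomorphism of \cref{thm:free Godel over S} descends to the depth-$n$ level, i.e., that $(-)_n$ carries the $\ERS$-isomorphism $\CC(\varepsilon)$ to an $\ERS_n$-isomorphism and hence $\varphi$ to a $\GA_n$-isomorphism. Once functoriality of $(-)_n$ from \cref{prop:ERSn to ER right adjoint} is available this is immediate, since functors preserve isomorphisms; everything else is a line-by-line transcription of the arguments already established in the section.
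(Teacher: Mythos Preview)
Your proposal is correct and is exactly the ``straightforward adaptation'' the paper alludes to: you replay the proof of \cref{thm:free Godel over S} with \cref{thm:free GAn over L} in place of \cref{thm:free Godel over L}, and you transport along the isomorphism obtained by applying the functor $(-)_n$ of \cref{prop:ERSn to ER right adjoint} to $\CC(\varepsilon)$. The only cosmetic point is that when you write $\varphi = \CC_n(\varepsilon)^{-1}$ you of course mean the preimage map $\CC_n(\varepsilon)^{-1}[-]$ (i.e., $\CC_n(\varepsilon)^*$), just as in the paper's computation; with that understood, the chain of equalities goes through verbatim.
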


\section{Coproducts of G\"odel algebras}\label{sec:coproducts}

In \cite{DM06} the authors describe a procedure to compute the duals of binary coproducts of finite G\"odel algebras. 
In this section we utilize the machinery developed in \cref{sec:free Godel} to provide a dual description of arbitrary coproducts of G\"odel algebras without any restriction on the number of factors nor on the cardinality of the factors.
Our first goal is then to study products in the category $\ERS$ of Esakia root systems.
Products in the category of Esakia spaces are notoriously complicated. We show that products in $\ERS$ can be easily described in terms of collections of closed chains in the cartesian product of the factors.

We first recall the description of products in the category of Priestley spaces. Let $\{X_i \mid i \in I\}$ be a family of Priestley spaces and denote by $\prod_{i \in I} X_i$ their cartesian product equipped with the product topology and componentwise order. 
To simplify the notation, we will denote the product by $\timesp X_i$ when the set $I$ of indexes is clear from the context.
For each $i \in I$, we denote by $\pi_i \colon \timesp X_i \to X_i$ the projection onto $X_i$. The following proposition is well known and is an immediate consequence of the fact that products in the categories of topological spaces and posets coincide with cartesian products.

\begin{proposition}\label{prop:free Priestley}
Let $\{X_i \mid i \in I\}$ be a family of Priestley spaces. Then $\timesp X_i$ together with the maps $\pi_i \colon \timesp X_i \to X_i$ is the product of $\{X_i \mid i \in I\}$ in $\Pries$.
\end{proposition}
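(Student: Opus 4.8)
The statement to prove is \cref{prop:free Priestley}: that $\timesp X_i$ together with the projections $\pi_i$ is the product of the family $\{X_i \mid i \in I\}$ in $\Pries$. The plan is to verify the two standard requirements: that $\timesp X_i$ is actually an object of $\Pries$, and that it satisfies the universal property of the product.

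First I would check that $\timesp X_i$, equipped with the product topology and the componentwise (product) order, is a Priestley space. Compactness of the product topology is Tychonoff's theorem, and the product order is readily checked to be a partial order. The only point requiring a small argument is the Priestley separation axiom: given $x = (x_i)_{i \in I}$ and $y = (y_i)_{i \in I}$ with $x \nleq y$, there is some coordinate $j \in I$ with $x_j \nleq y_j$; applying the Priestley separation axiom in $X_j$ yields a clopen upset $U \subseteq X_j$ with $x_j \in U$ and $y_j \notin U$, and then $\pi_j^{-1}[U]$ is a clopen upset of $\timesp X_i$ containing $x$ but not $y$. Here I use that $\pi_j$ is continuous and order-preserving, so preimages of clopen upsets are clopen upsets. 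Each $\pi_j$ is continuous (projections are continuous for the product topology) and order-preserving (by definition of the product order), hence a $\Pries$-morphism.

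Next I would verify the universal property. Let $Z \in \Pries$ together with $\Pries$-morphisms $f_i \colon Z \to X_i$ be given. The unique candidate for a mediating map is $f \colon Z \to \timesp X_i$ defined by $f(z) = (f_i(z))_{i \in I}$, which is forced by the requirement $\pi_i \circ f = f_i$; this gives uniqueness immediately. It remains to check that $f$ is a $\Pries$-morphism. Continuity follows because the product topology is the initial topology with respect to the $\pi_i$, so $f$ is continuous iff each $\pi_i \circ f = f_i$ is continuous, which holds by hypothesis. Order-preservation is likewise coordinatewise: if $z \le z'$ in $Z$ then $f_i(z) \le f_i(z')$ for every $i$ since each $f_i$ is order-preserving, hence $f(z) \le f(z')$ in the product order.

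There is no real obstacle here — as the surrounding text notes, the result is an immediate consequence of the fact that products in $\mathsf{Top}$ and in the category of posets are both given by cartesian products, and that the Priestley separation axiom is inherited. The only item that is not completely formal is the verification of the Priestley separation axiom for the product, and even that is the routine one-coordinate argument sketched above. So I would present the proof as: (1) $\timesp X_i \in \Pries$ (Tychonoff plus the separation check), with the $\pi_i$ being $\Pries$-morphisms; (2) the coordinatewise-defined map $f$ is the unique $\Pries$-morphism with $\pi_i \circ f = f_i$, using that the product topology is initial and the product order is coordinatewise.
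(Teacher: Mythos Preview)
Your proposal is correct and follows exactly the approach the paper indicates: the paper gives no proof at all, merely noting that the result is well known and ``an immediate consequence of the fact that products in the categories of topological spaces and posets coincide with cartesian products,'' and you have simply supplied the routine details (Tychonoff, the one-coordinate Priestley separation argument, and the universal property via the initial topology and coordinatewise order).
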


We now introduce the main construction of this section. Our first goal is to show that it gives the products in the category of Esakia root systems.

\begin{definition}
Let $\{Y_i \mid i \in I\}$ be a family of Esakia root systems. We define
\[
\bigotimes_{i \in I} Y_i \coloneqq \{ C \in \CC\big(\textstyle{\timesp} Y_i\big) \mid \pi_i[C]  \text{ is an upset of } Y_i \text{ for every } i \in I\}
\]
and equip it with the subspace topology and order induced by $\CC\big(\timesp Y_i\big)$.
\end{definition}

\begin{remark}
If $C \in \CC(\timesp Y_i)$, then  $\pi_i[C]$ is an upset of $Y_i$ iff it is a principal upset. Indeed, $C$ has a least element $m(C)$ by \cref{lem:CC in Pries min and max}, and so $\pi_i(m(C))$ is the least element of $\pi_i[C]$ because $\pi_i$ is an order preserving map. Therefore, if $\pi_i[C]$ is an upset, then $\pi_i[C]=\up \pi_i(m(C))$.
\end{remark}

\begin{example}
Let $\two = \{ 0 < 1\}$ be the $2$-element chain with the discrete topology. Consider $C_1,C_2 \in \CC(\two \times \two)$ defined as follows
\[
C_1= \{(1,0),(1,1)\} \qquad \text{and} \qquad C_2= \{(0,0),(0,1)\}.
\]
The white dots in \cref{fig:C1 and C2 projections} represent $C_1$ with its projections $\pi_1[C_1], \pi_2[C_1] \in \CC(\two)$ on the left and $C_2$ with its projections $\pi_1[C_2],\pi_2[C_2] \in \CC(\two)$ on the right. 
Since both $\pi_1[C_1]=\{1\}$ and $\pi_2[C_1]=\{0,1\}$ are upsets of $\two$, we have that $C_1 \in \two \otimes \two$. However, $C_2 \notin \two \otimes \two$ because $\pi_1[C_2]=\{0\}$ is not an upset of $\two$.
\begin{figure}[!h]
\begin{tikzpicture}
	\begin{pgfonlayer}{nodelayer}
		\node [style=white dot] (0) at (-4, 1) {};
		\node [style=white dot] (1) at (-5, 0) {};
		\node [style=black dot] (2) at (-4, -1) {};
		\node [style=black dot] (4) at (-3, 0) {};
		\node [style=black dot] (5) at (4, 1) {};
		\node [style=black dot] (6) at (3, 0) {};
		\node [style=white dot] (7) at (4, -1) {};
		\node [style=white dot] (9) at (5, 0) {};
		\node [style=white dot] (10) at (-6.125, -1.125) {};
		\node [style=black dot] (11) at (-5.125, -2.125) {};
		\node [style=black dot] (12) at (1.875, -1.125) {};
		\node [style=white dot] (13) at (2.875, -2.125) {};
		\node [style=white dot] (14) at (-2.875, -2.125) {};
		\node [style=white dot] (16) at (-1.875, -1.125) {};
		\node [style=white dot] (17) at (5.125, -2.125) {};
		\node [style=white dot] (19) at (6.125, -1.125) {};
		\node [style=none] (20) at (-4.75, -0.75) {};
		\node [style=none] (22) at (-5.375, -1.375) {};
		\node [style=none] (23) at (-3.25, -0.75) {};
		\node [style=none] (24) at (-2.625, -1.375) {};
		\node [style=none] (25) at (3.25, -0.75) {};
		\node [style=none] (26) at (2.625, -1.375) {};
		\node [style=none] (27) at (4.75, -0.75) {};
		\node [style=none] (28) at (5.375, -1.375) {};
		\node [style=none] (32) at (5.375, -0.75) {\small $\pi_2$};
		\node [style=none] (33) at (2.625, -0.75) {\small $\pi_1$};
		\node [style=none] (34) at (-2.625, -0.75) {\small $\pi_2$};
		\node [style=none] (35) at (-5.375, -0.75) {\small $\pi_1$};
	\end{pgfonlayer}
	\begin{pgfonlayer}{edgelayer}
		\draw (0) to (4);
		\draw (4) to (2);
		\draw (1) to (2);
		\draw (1) to (0);
		\draw (5) to (9);
		\draw (9) to (7);
		\draw (6) to (7);
		\draw (6) to (5);
		\draw (10) to (11);
		\draw (12) to (13);
		\draw (16) to (14);
		\draw (19) to (17);
		\draw [style=to] (20.center) to (22.center);
		\draw [style=to] (23.center) to (24.center);
		\draw [style=to] (25.center) to (26.center);
		\draw [style=to] (27.center) to (28.center);
	\end{pgfonlayer}
\end{tikzpicture}

\caption{The chains $C_1$ and $C_2$ of $\two \times \two$ and their projections.}\label{fig:C1 and C2 projections}
\end{figure}
\end{example}

We will show that $\timesg Y_i$ is the product of $\{Y_i \mid i \in I\}$ in the category $\ERS$. We begin by proving that $\timesg Y_i$ is an Esakia root system, but we first need to recall the following technical fact.

\begin{lemma}{\cite[Prop.~2.6(iv)]{Pri84}}\label{prop:separation closed upsets}
Let $A,B \subseteq X$ be closed subsets of a Priestley space such that $\up A \cap \down B = \varnothing$. Then there is a clopen upset $U$ and a clopen downset $D$ such that $U \cap D = \varnothing$,  $A \subseteq U$, and $B \subseteq D$.
\end{lemma}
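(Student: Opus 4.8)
The plan is to prove this as a standard separation fact via a double compactness argument, using nothing beyond the Priestley separation axiom (so that it is self-contained, even though it is quoted from \cite{Pri84}). First I would rephrase the hypothesis: since $\up A \cap \down B = \varnothing$, for every $a \in A$ and every $b \in B$ we must have $a \nleq b$, for otherwise $a$ itself would lie in $\up A \cap \down B$. By the Priestley separation axiom, for each such pair $(a,b)$ there is a clopen upset $U_{a,b}$ with $a \in U_{a,b}$ and $b \notin U_{a,b}$.

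The first compactness step fixes $a \in A$. The complements $X \setminus U_{a,b}$ are clopen downsets, and as $b$ ranges over $B$ they cover $B$; since $B$ is closed in the compact space $X$, it is compact, so finitely many of them already cover $B$. Intersecting the corresponding finitely many $U_{a,b}$ gives a clopen upset $U_a$ with $a \in U_a$ and $U_a \cap B = \varnothing$. Here I would use the upset property to upgrade this: if some $x$ lay in $U_a \cap \down B$, then $x \le b$ for some $b \in B$, whence $b \in U_a$ because $U_a$ is an upset, contradicting $U_a \cap B = \varnothing$; therefore $U_a \cap \down B = \varnothing$.

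The second compactness step is symmetric in $A$: the clopen upsets $U_a$, for $a \in A$, cover the compact set $A$, so finitely many suffice, and their union $U$ is a clopen upset with $A \subseteq U$ and $U \cap \down B = \varnothing$ (in particular $U \cap B = \varnothing$). Finally I would set $D \coloneqq X \setminus U$, which is a clopen downset; then $U \cap D = \varnothing$ is immediate, $A \subseteq U$ holds by construction, and $B \subseteq D$ follows from $U \cap B = \varnothing$. This yields the claim.

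There is no genuine obstacle here: the only point requiring a little care is the interplay of the two compactness arguments together with the use of the upset property to pass from ``disjoint from $B$'' to ``disjoint from $\down B$'', and the rest is routine bookkeeping. (Alternatively one could first invoke \cite[Prop.~2.6(ii)]{Pri84} to see that $\up A$ and $\down B$ are closed and then separate these two disjoint closed sets, but this reduces to essentially the same compactness argument.)
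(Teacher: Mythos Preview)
Your argument is correct and is the standard double-compactness proof of this separation fact. Note that the paper does not supply its own proof of this lemma: it is merely quoted from \cite[Prop.~2.6(iv)]{Pri84} and used as a black box, so there is nothing to compare against. One minor remark: the intermediate upgrade from $U_a \cap B = \varnothing$ to $U_a \cap \down B = \varnothing$ is not strictly needed for the conclusion (once you set $D = X \setminus U$, the inclusion $B \subseteq D$ already follows from $U \cap B = \varnothing$, and $D$ being a downset then forces $\down B \subseteq D$ automatically), but it does no harm.
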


\begin{theorem}\label{thm:prod ERS}
$\timesg Y_i$ is an Esakia root system.
\end{theorem}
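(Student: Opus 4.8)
The plan is to show that $\timesg Y_i$ is a closed subspace of $\CC(\timesp Y_i)$ and that it is a root system, then deduce the Esakia property. Since $\CC(\timesp Y_i)$ is an Esakia root system by \cref{thm:CC in ERS}, and closed upsets of Esakia spaces are again Esakia spaces (as used already in the proof of \cref{prop:ERSn to ER right adjoint} via \cite[Lem.~3.4.11]{Esa19}), the cleanest route is: (i) prove $\timesg Y_i$ is an upset of $\CC(\timesp Y_i)$ with respect to $\lec$; (ii) prove $\timesg Y_i$ is closed in $\CC(\timesp Y_i)$; (iii) conclude it is an Esakia space with the induced structure, and being a subposet of a root system, it is itself a root system. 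Actually step (iii) needs care: a sub-root-system is automatically a root system since $\upc$ taken in the subspace is contained in $\upc$ taken in the ambient space, and a subset of a chain is a chain; and if $\timesg Y_i$ is an upset then $\upc C$ computed in $\timesg Y_i$ equals $\upc C$ computed in $\CC(\timesp Y_i)$, which is a $\lec$-chain.

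\textbf{Step (i): $\timesg Y_i$ is an upset.} Suppose $C \in \timesg Y_i$ and $C \lec C'$ in $\CC(\timesp Y_i)$. By \cref{lem:equivalent def lec}, $C' = \up x \cap C$ for some $x \in C$. I must check $\pi_i[C']$ is an upset of $Y_i$ for each $i$. Write $x = (x_i)_{i\in I}$. Since $\pi_i$ is order-preserving and $C'$ is a chain with least element $x$ (it equals $\up x \cap C$), one gets $\pi_i[C'] \subseteq \up \pi_i(x)$. For the reverse, take $z \ge \pi_i(x)$ in $Y_i$; since $\pi_i[C]$ is an upset of $Y_i$ containing $\pi_i(x)$, we have $z \in \pi_i[C]$, so $z = \pi_i(w)$ for some $w \in C$. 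Using that $C$ is a chain, either $w \ge x$ — then $w \in C'$ and $z \in \pi_i[C']$ — or $w \le x$, in which case $\pi_i(w) \le \pi_i(x) \le z = \pi_i(w)$ forces $\pi_i(w) = \pi_i(x)$, and then $z = \pi_i(x) = \pi_i(x') \in \pi_i[C']$. So $\pi_i[C'] = \up \pi_i(x)$ is a (principal) upset, as the Remark preceding the theorem anticipates; hence $C' \in \timesg Y_i$.

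\textbf{Step (ii): $\timesg Y_i$ is closed.} This is the main obstacle. I expect to proceed as in \cref{thm:CC(X) closed in V(X)}: take $F \in \CC(\timesp Y_i) \setminus \timesg Y_i$ and build a basic open neighborhood of $F$ avoiding $\timesg Y_i$. By hypothesis there is an index $i$ with $\pi_i[F]$ not an upset of $Y_i$, i.e.\ there exist $a \in \pi_i[F]$ and $b \in Y_i$ with $a \le b$ but $b \notin \pi_i[F]$. Since $\pi_i[F]$ is a closed chain in $Y_i$ (by \cref{lem:image of chain is chain}) and $Y_i$ is an Esakia space, and $b \notin \pi_i[F]$ with $b \ge a$ for some point $a$ of the chain, I want to separate: the sets $\up b$ and $\pi_i[F]$ interact so that $\up b \cap \pi_i[F]$ is a closed subset of $\pi_i[F]$ not containing $b$ but lying above $a$... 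The right move is to find, via \cref{prop:separation closed upsets} or the Priestley/Esakia separation applied in $Y_i$, a clopen upset $U$ and clopen downset $D$ of $Y_i$ with $b \notin U$, $b \notin D$, while $\pi_i[F] \subseteq U \cup D$ and $\pi_i[F] \cap D \cap (\text{something below } b) \ne \varnothing$ and $\pi_i[F] \cap U \cap (\text{something above }b) \ne\varnothing$ — essentially reproducing the last case of the proof of \cref{thm:CC in ERS}. Concretely: let $z_1$ be the greatest element of $\down b \cap \pi_i[F]$ (nonempty since $a \le b$, $a \in \pi_i[F]$) and, if $\up b \cap \pi_i[F] \ne \varnothing$, let $z_2$ be its least element; then $z_1 < b$ (as $b \notin \pi_i[F]$) and $b < z_2$, so by Priestley separation in $Y_i$ pick a clopen downset $D$ with $z_1 \in D$, $b \notin D$ and a clopen upset $U$ with $z_2 \in U$, $b \notin U$, arranging $\pi_i[F] \subseteq U \cup D$ (using that $z_1, z_2$ bound the chain $\pi_i[F]$ away from $b$; if $\up b \cap \pi_i[F] = \varnothing$ just take $U = \varnothing$). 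Pulling back through $\pi_i$: set $\widetilde U = \pi_i^{-1}[U]$, $\widetilde D = \pi_i^{-1}[D]$, $\widetilde b^c = \pi_i^{-1}[Y_i \setminus\{b\}]$ — but $\{b\}$ may not be clopen, so instead use a clopen upset $U_b$ and clopen downset $D_b$ of $Y_i$ separating $b$ from $z_1$ and $z_2$ and note $b \notin U \cup D$ gives a clopen neighborhood $Y_i \setminus (U \cup D)$ of $b$ that $\pi_i[F]$ misses. Then the Vietoris-basic clopen $\Box\,\pi_i^{-1}[U \cup D] \cap \Diamond\,\pi_i^{-1}[D] \cap \Diamond\,(\text{witness for }U)$ of $\CC(\timesp Y_i)$ contains $F$; and any $C$ in it has $\pi_i[C] \subseteq U \cup D$ while meeting both $D$ below and $U$ above, so by the chain argument of \cref{thm:CC in ERS}\eqref{lem:Box and Diamond properties:item7}-type reasoning $\pi_i[C]$ cannot be an upset of $Y_i$ (an upset meeting $D$ strictly below $b$ and being contained in $U \cup D$ with $b\notin U\cup D$ would have to contain $b$). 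Hence the neighborhood avoids $\timesg Y_i$.

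\textbf{Step (iii): conclusion.} Once $\timesg Y_i$ is a closed upset of the Esakia root system $\CC(\timesp Y_i)$, \cite[Lem.~3.4.11]{Esa19} gives that it is an Esakia space with the subspace topology and induced order, exactly as in the proof of \cref{prop:ERSn to ER right adjoint}. Being a sub-poset of a root system with the property that $\upc$ computed internally coincides with $\upc$ computed in the ambient space (because it is an upset), each $\upc C$ remains a $\lec$-chain, so $\timesg Y_i$ is a root system. Therefore $\timesg Y_i$ is an Esakia root system. I expect Step (ii) — the topological closedness, requiring the careful three-way case split inherited from \cref{thm:CC in ERS} — to be the substantive part; Steps (i) and (iii) are short.
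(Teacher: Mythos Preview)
Your overall strategy matches the paper's exactly: show that $\timesg Y_i$ is a closed upset of the Esakia root system $\CC(\timesp Y_i)$ and invoke \cite[Lem.~3.4.11]{Esa19}. Your Step~(i) is correct (essentially the paper's argument), and Step~(iii) is fine.

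The gap is in Step~(ii). Your proposed neighborhood $\Box\,\pi_i^{-1}[U\cup D]\cap\Diamond\,\pi_i^{-1}[D]\cap\Diamond\,\pi_i^{-1}[U]$ need not be disjoint from $\timesg Y_i$. The problem is that $\Diamond\,\pi_i^{-1}[D]$ only guarantees that $\pi_i[C]$ meets the clopen downset $D$ \emph{somewhere}, not at a point below $b$; your parenthetical ``an upset meeting $D$ strictly below $b$\ldots'' assumes exactly what is not ensured. Concretely, take $Y_i$ finite with a chain $a<b<c$ and an isolated maximal point $d$, and take $F$ with $\pi_i[F]=\{a,c\}$. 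Priestley separation can return $D=\{a,d\}$ and $U=\{c,d\}$; then any $C$ with $\pi_i[C]=\{d\}$ lies in your neighborhood, yet $\{d\}=\up d$ is an upset, so $C$ may well belong to $\timesg Y_i$. The same obstruction persists in the degenerate case $\up b\cap\pi_i[F]=\varnothing$ with $U=\varnothing$: then your neighborhood is just $\Box\,\pi_i^{-1}[D]$, which certainly contains members of $\timesg Y_i$.

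What you are missing is precisely the Esakia property of $Y_i$, which you never invoke. The paper chooses clopens $U,D$ with the missing point $x$ in $U\cap D$ (rather than outside $U\cup D$), and uses the Esakia property to form the \emph{clopen} set $\down(U\cap D)$. The diamond part of the neighborhood is then $\Diamond\,\pi_i^{-1}[\down(U\cap D)\setminus(U\cap D)]$: any $K$ meeting this set has a point of $\pi_i[K]$ strictly below some element of $U\cap D$, so if $\pi_i[K]$ were an upset it would have to contain that element---contradicting the box part, which forces $\pi_i[K]\subseteq Y_i\setminus(U\cap D)$. This ``tethering below a clopen neighborhood of $x$'' is the substantive idea you need to complete Step~(ii).
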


\begin{proof}
Since $\timesp Y_i$ is a Priestley space, \cref{thm:CC in ERS} yields that $\CC(\timesp Y_i)$ is an Esakia root system. Since closed upsets of Esakia spaces equipped with the subspace topology and the restriction of the order are Esakia spaces (see, e.g., \cite[Lem.~3.4.11]{Esa19}), it is sufficient to show that $\timesg Y_i$ is a closed upset of $\CC(\timesp Y_i)$. 

To prove that $\timesg Y_i$ is an upset of $\CC(\timesp Y_i)$, let $C_1 \in \timesg Y_i$ and $C_2 \in \CC(\timesp Y_i)$ such that $C_1 \lec C_2$. We show that $C_2 \in \timesg Y_i$, which means that $\up \pi_i[C_2] = \pi_i[C_2]$ for every $i \in I$. Let $i \in I$ and $y \in \up \pi_i[C_2]$. Since $C_2 \subseteq C_1$ and $\pi_i[C_1]$ is an upset of $Y_i$, we obtain that $y \in \up \pi_i[C_2] \subseteq \up \pi_i[C_1] = \pi_i[C_1]$. So, there is $x \in C_1$ such that $\pi_i(x)=y$. Let $m(C_2)$ be the least element of $C_2$. Since $x,m(C_2) \in C_1$ and $C_1$ is a chain, we have that $x \le m(C_2)$ or $m(C_2) \le x$. If $x \le m(C_2)$, then $y=\pi_i(x) \le \pi_i(m(C_2))$. We have that $\pi_i(m(C_2)) \le y$ because $y \in \up \pi_i[C_2]$ and $\pi_i$ is order preserving. So, $y=\pi_i(m(C_2)) \in \pi_i[C_2]$. If $m(C_2) \le x$, then $x \in C_2$ because $C_1 \lec C_2$. Thus, $y=\pi_i(x) \in \pi_i[C_2]$. In either case, $y \in \pi_i[C_2]$. This shows that $\pi_i[C_2]$ is an upset of $Y_i$ for each $i \in I$, and hence $C_2 \in \CC(\timesp Y_i)$.

It remains to show that $\timesg Y_i$ is a closed subset of $\CC(\timesp Y_i)$. Let $C \in \CC(\timesp Y_i)$ be such that $C \notin \timesg Y_i$. We show that $C$ is contained in an open subset of $\CC(\timesp Y_i)$ disjoint from $\timesg Y_i$. 
Since $C \notin \timesg Y_i$, there is $i \in I$ such that $\pi_i[C]$ is not an upset of $Y_i$. Then $\pi_i[C] \ne \up \pi_i[C]$. 
Because $\pi_i[C] \ne \up \pi_i[C]$, there is $x \in \up \pi_i[C]$ that is not in $\pi_i[C]$.
From $x \notin \pi_i[C]$ it follows that $\up x \cap (\down x \cap \pi_i[C]) = \varnothing$ and $\down x \cap (\up x \cap \pi_i[C])=\varnothing$. Since $\down x \cap \pi_i[C]$ and $\up x \cap \pi_i[C]$ are closed subsets of $Y_i$, two applications of \cref{prop:separation closed upsets} yield a clopen upset $U$ disjoint from $\down x \cap \pi_i[C]$ such that $x \in U$ and a clopen downset $D$ disjoint from $\up x \cap \pi_i[C]$ such that $x \in D$. 
Then
\begin{equation}\label{eq:inclusions down up x cap piiC}
\down x \cap \pi_i[C] \subseteq \down (U \cap D), \quad \down x \cap \pi_i[C] \subseteq Y_i \setminus (U \cap D), \quad \text{and} \quad \up x \cap \pi_i[C] \subseteq U \setminus D,
\end{equation}
where the first inclusion holds because $x \in U \cap D$, the second because $\down x \cap \pi_i[C] \subseteq Y_i \setminus U$, and the third follows from $x \in U$ and $\up x \cap \pi_i[C] \subseteq Y_i \setminus D$. Since $\up \pi_i[C] \subseteq \up \pi_i(m(C))$ and $Y_i$ is a root system, we have that $\up \pi_i[C]$ is a chain. It then follows from $x \in \up \pi_i[C]$ and $\pi_i[C] \subseteq \up \pi_i[C]$ that $\pi_i[C]=(\down x \cap \pi_i[C]) \cup (\up x \cap \pi_i[C])$. The inclusions in \eqref{eq:inclusions down up x cap piiC} imply
that
\begin{equation}\label{eq:proof prod in box}
\pi_i[C] = (\down x \cap \pi_i[C]) \cup (\up x \cap \pi_i[C]) \subseteq (\down(U \cap D)  \setminus  (U \cap D)) \cup (U  \setminus  D).
\end{equation}
Since $x \in \up \pi_i[C]$, we have that 
$\pi_i(m(C)) \le x$, and hence $\pi_i(m(C)) \in \down x \cap \pi_i[C]$. It then follows from the inclusions in \eqref{eq:inclusions down up x cap piiC} that $\pi_i(m(C)) \in \down(U \cap D)$ and $\pi_i(m(C)) \notin U \cap D$. Thus, 
\begin{equation}\label{eq:proof prod in diamond}
\pi_i[C] \cap (\down(U \cap D)  \setminus  (U \cap D)) \neq \varnothing.
\end{equation}
Then \eqref{eq:proof prod in box} and \eqref{eq:proof prod in diamond} imply
\[
C \in \mathcal{U} \coloneqq \Box \pi_i^{-1}[(\down(U \cap D)  \setminus  (U \cap D)) \cup (U  \setminus  D)] \cap \Diamond \pi_i^{-1}[\down(U \cap D)  \setminus  (U \cap D)].
\]
We show that $\mathcal{U}$ is a clopen of $\CC(\timesp Y_i)$ disjoint from $\timesg Y_i$. 
Since $Y_i$ is an Esakia space, $U$ and $D$ are clopen, and $\pi_i$ is continuous, we have that $\mathcal{U}$ is clopen. To prove that each $K \in \mathcal{U}$ is not in $\timesg Y_i$, assume that $K \in \mathcal{U} \cap \timesg Y_i$. Then $K \in \mathcal{U} \subseteq \Diamond \pi_i^{-1}[\down(U \cap D)  \setminus  (U \cap D)]$, and hence there is $y \in K$ such that $\pi_i(y) \in \down (U \cap D)  \setminus  (U \cap D)$. This implies that there is $z \in Y_i$ such that 
\[
z \in \up \pi_i(y) \cap U \cap D \subseteq \up \pi_i (m(K)) \cap U \cap D \subseteq \pi_i[K] \cap U \cap D,
\]
where $\up \pi_i (m (K)) \subseteq \up \pi_i[K] = \pi_i[K]$ because $K \in \timesg Y_i$.
However, 
\[
K \in \mathcal{U} \subseteq \Box \pi_i^{-1}[(\down(U \cap D)  \setminus  (U \cap D)) \cup (U  \setminus  D)] \subseteq \Box \pi_i^{-1}[Y_i \setminus (U \cap D)],
\] 
and so $\pi_i[K] \subseteq Y_i \setminus (U \cap D)$. This contradicts the existence of $z \in \pi_i[K] \cap U \cap D$. Therefore, $\mathcal{U}$ is a clopen subset of $\CC(\timesp Y_i)$ containing $C$ that is disjoint from $\timesg Y_i$. This shows that $\timesg Y_i$ is closed in $\CC(\timesp Y_i)$, and concludes the proof that $\timesg Y_i$ is an Esakia root system. 
\end{proof}

We are now ready to show that $\timesg Y_i$ is the product in $\ERS$ of the family $\{ Y_i \mid i \in I\}$. For each $i \in I$, we let $p_i \colon \timesg Y_i \to Y_i$ be the map that sends $C \in \timesg Y_i$ to $\pi_i(m(C)) \in Y_i$, where $m(C)$ is the least element of $C$.

\begin{theorem}\label{thm:univ prop prod in ERS}
Let $\{Y_i \mid i \in I \}$ be a family of Esakia root systems. Then $\timesg Y_i$ together with the maps $p_i \colon \timesg Y_i \to Y_i$ is the product of $\{Y_i \mid i \in I \}$ in $\ERS$.
\end{theorem}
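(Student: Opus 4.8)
The plan is to verify the universal property of the product directly, using the corresponding universal property of $\CC(\timesp Y_i)$ established in \cref{thm:univ prop CC(X)} together with the universal property of $\timesp Y_i$ in $\Pries$ from \cref{prop:free Priestley}. First I would check that each $p_i$ is an $\ERS$-morphism: continuity follows because $p_i = \pi_i \circ m$ where $m \colon \timesg Y_i \to \timesp Y_i$ is the least-element map (continuous and order preserving by \cref{lem:m Priestley map}, restricted to the closed upset $\timesg Y_i$) and $\pi_i$ is continuous; order preservation is similar. That $p_i$ is a p-morphism is the first point requiring care: if $p_i(C) \le y$ in $Y_i$, I need $D \gec C$ with $p_i(D) = y$. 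Since $\pi_i[C]$ is an upset of $Y_i$ containing $p_i(C) = \pi_i(m(C))$, we have $y \in \pi_i[C]$, so $y = \pi_i(x)$ for some $x \in C$; then $D \coloneqq \up x \cap C$ satisfies $C \lec D$ by \cref{lem:equivalent def lec}, lies in $\timesg Y_i$ because $\timesg Y_i$ is an upset of $\CC(\timesp Y_i)$ (\cref{thm:prod ERS}), and has least element $x$, so $p_i(D) = \pi_i(x) = y$.

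Next, given an Esakia root system $Z$ and $\ERS$-morphisms $f_i \colon Z \to Y_i$, I would first assemble the $\Pries$-morphism $\langle f_i \rangle \colon Z \to \timesp Y_i$ given by $z \mapsto (f_i(z))_{i \in I}$, which exists and is unique by \cref{prop:free Priestley}. Then I would apply \cref{thm:univ prop CC(X)} to obtain the unique continuous p-morphism $g \colon Z \to \CC(\timesp Y_i)$ with $m \circ g = \langle f_i \rangle$; explicitly, $g(z) = \langle f_i \rangle [\up z] = \{(f_i(w))_{i \in I} \mid w \ge z\}$. The crucial step is to show that $g$ actually lands in the subspace $\timesg Y_i$, i.e., that $\pi_i[g(z)]$ is an upset of $Y_i$ for every $i$ and every $z \in Z$. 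But $\pi_i[g(z)] = f_i[\up z] = \up f_i(z)$ since $f_i$ is a p-morphism, and a principal upset is an upset; so this holds. Once $g$ corestricts to $g \colon Z \to \timesg Y_i$, continuity and the p-morphism property are inherited because $\timesg Y_i$ carries the subspace topology and induced order and is an upset (for the p-morphism property one checks that the witnesses produced in $\CC(\timesp Y_i)$ already lie in $\timesg Y_i$, which holds since it is an upset). Finally, $p_i \circ g = \pi_i \circ m \circ g = \pi_i \circ \langle f_i \rangle = f_i$.

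For uniqueness, suppose $h \colon Z \to \timesg Y_i$ is a continuous p-morphism with $p_i \circ h = f_i$ for all $i$. Composing with the inclusion $\timesg Y_i \hookrightarrow \CC(\timesp Y_i)$ gives a continuous p-morphism $Z \to \CC(\timesp Y_i)$ whose composite with $m$ is $z \mapsto (\pi_i(m(h(z))))_i = (p_i(h(z)))_i = (f_i(z))_i = \langle f_i\rangle(z)$; by the uniqueness clause of \cref{thm:univ prop CC(X)} this composite equals $g$, hence $h = g$ as maps into $\timesg Y_i$.

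The main obstacle I anticipate is the verification that $g$ corestricts to $\timesg Y_i$ and, slightly more delicately, that the p-morphism property of $g \colon Z \to \timesg Y_i$ is not lost when passing to the subspace — one must confirm that for $g(z) \lec C$ with $C \in \timesg Y_i$, the preimage witness $w \ge z$ with $g(w) = C$ provided by \cref{thm:univ prop CC(X)} is the same one, which it is since $g$ into $\CC(\timesp Y_i)$ already has this property and $\timesg Y_i$ is a sub-root-system. Everything else is routine bookkeeping with the functorial machinery of \cref{sec:free Godel}.
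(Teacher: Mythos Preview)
Your proposal is correct and follows essentially the same route as the paper: both verify that $p_i=\pi_i\circ m$ is a continuous p-morphism using that $\timesg Y_i$ is an upset of $\CC(\timesp Y_i)$, then assemble $\langle f_i\rangle\colon Z\to\timesp Y_i$ via \cref{prop:free Priestley}, lift it to $g\colon Z\to\CC(\timesp Y_i)$ via \cref{thm:univ prop CC(X)}, check $\pi_i[g(z)]=f_i[\up z]$ is an upset so that $g$ corestricts to $\timesg Y_i$, and deduce uniqueness from the uniqueness clause of \cref{thm:univ prop CC(X)}. The only difference is cosmetic: the paper handles the corestriction in one line, whereas you flag the p-morphism property of the corestricted $g$ as a potential subtlety---but as you yourself observe, this is immediate since $\timesg Y_i$ is an upset.
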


\begin{proof}
We first show that $p_i \colon \timesg Y_i \to Y_i$ is a continuous p-morphism for every $i \in I$.
We have that $p_i = \pi_i \circ m$, and so it is a continuous map because every projection $\pi_i$ is continuous and $m$ is continuous by  \cref{lem:m Priestley map}.
We show that $p_i$ is a p-morphism. Both $\pi_i$ and $m$ are order preserving, so $p_i$ is also order preserving. Let $C \in \timesg Y_i$ and $y \in Y_i$ such that $p_i(C) \le y$. Then $\pi_i(m(C)) \le y$, and hence $y \in \up \pi_i(m(C)) \subseteq \up \pi_i[C]$. Since $C \in \timesg Y_i$, we have that $\up \pi_i[C]=\pi_i[C]$. Thus, $y \in \pi_i[C]$. So, there is $x \in C$ such that $\pi_i(x)=y$. Let $K = C \cap \up x$. Then $C \lec K$, and hence $K \in \timesg Y_i$ because $\timesg Y_i$ is an upset in $\CC(\timesp Y_i)$ as shown in the proof of \cref{thm:prod ERS}. Therefore, $y=\pi_i(x)=\pi_i(m(K)) = p_i(K)$. This shows that $p_i$ is a p-morphism.

It remains to verify the universal property of products. Let $Z$ be an Esakia root system and $f_i \colon Z \to Y_i$ a continuous p-morphism for each $i \in I$. 
By \cref{prop:free Priestley}, there is a map $q \colon Z \to \timesp Y_i$ sending $z \in Z$ to $(f_i(z))_{i \in I}$ that is continuous and order preserving. Thus, \cref{thm:univ prop CC(X)} yields a continuous p-morphism $g \colon Z \to \CC(\timesp Y_i)$ given by $g(z) = q[\up z]= \{ (f_i(w))_{i \in I} \mid w \in \up z \}$ for every $z \in Z$. Since each $f_i$ is a p-morphism, we have that $f_i[\up z]$ is an upset of $Y_i$ for every $z \in Z$.
So, $\up \pi_i[g(z)] = \up f_i[\up z]=f_i[\up z] = \pi_i[g(z)]$. Therefore, $g(z) \in \timesg Y_i$, and hence $g$ restricts to a continuous p-morphism $g \colon Z \to \timesg Y_i$. Moreover, $p_i g(z)=\pi_i m g(z)=\pi_i ((f_i(z))_{i \in I})=f_i(z)$ for every $z \in Z$. Thus, $p_i \circ g = f_i$ for each $i \in I$.
We now show that $g$ is the unique map with such properties. Suppose that $h \colon Z \to \timesg Y_i$ is a continuous p-morphism such that $p_i \circ h = f_i$ for every $i \in I$. Since $\timesg Y_i$ is an upset of $\CC(\timesp Y_i)$, it follows that $h \colon Z \to \CC(\timesp Y_i)$ is a continuous p-morphism. We also have that $\pi_i m(h(z)) = p_i(h(z)) = f_i(z)$ for all $i \in I$ and $z \in Z$. Thus, $m(h(z))=(f_i(z))_{i \in I} = m(g(z))$ for each $z \in Z$. Then $h \colon Z \to \CC(\timesp Y_i)$ is a continuous p-morphism such that  $m \circ h=m \circ g$. It follows from \cref{thm:univ prop CC(X)} that $h=g$.
\end{proof}

We are now ready to describe the Esakia duals of coproducts of G\"odel algebras.

\begin{theorem}\label{thm:coproduct Godel}
Let $\{ G_i \mid i \in I\}$ be a family of G\"odel algebras and $Y_i=(G_i)_*$ their Esakia duals. Then the coproduct of $\{ G_i \mid i \in I\}$ in $\GA$ is given by $(\timesg Y_i)^*$ together with the maps sending $a \in G_i$ to $\Box \pi_i^{-1}[\sigma_{G_i}(a)] \in (\timesg Y_i)^*$ for each $i \in I$.
\end{theorem}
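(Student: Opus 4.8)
The plan is to reduce the statement to \cref{thm:univ prop prod in ERS} by dualizing. Esakia duality for Gödel algebras provides a dual equivalence between $\ERS$ and $\GA$, and a dual equivalence of categories sends limits to colimits; in particular it turns products into coproducts. Hence, starting from the product of $\{Y_i \mid i \in I\}$ in $\ERS$, which by \cref{thm:univ prop prod in ERS} is $\timesg Y_i$ together with the projections $p_i \colon \timesg Y_i \to Y_i$, and applying the functor $(-)^*$, we obtain that $(\timesg Y_i)^*$ together with the maps $p_i^{-1} \colon (Y_i)^* \to (\timesg Y_i)^*$ is the coproduct of $\{(Y_i)^* \mid i \in I\}$ in $\GA$. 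Composing each coprojection with the isomorphism $\sigma_{G_i} \colon G_i \to ((G_i)_*)^* = (Y_i)^*$ given by Priestley duality then shows that $(\timesg Y_i)^*$ together with the maps $p_i^{-1} \circ \sigma_{G_i} \colon G_i \to (\timesg Y_i)^*$ is a coproduct of $\{G_i \mid i \in I\}$ in $\GA$, since precomposing the legs of a colimit cocone with isomorphisms again produces a colimit cocone. (One could equivalently spell this out via the universal property, but this is exactly the content of ``dual equivalence carries products to coproducts''.)

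It then remains to verify that $p_i^{-1}[\sigma_{G_i}(a)] = \Box \pi_i^{-1}[\sigma_{G_i}(a)]$ for every $a \in G_i$, where on the right $\Box$ denotes the operation $A \mapsto \{C \in \timesg Y_i \mid C \subseteq A\}$, that is, the usual $\Box A$ in $\CC(\timesp Y_i)$ intersected with $\timesg Y_i$. As noted in the proof of \cref{thm:univ prop prod in ERS}, $p_i = \pi_i \circ m$, where $m$ is the restriction to $\timesg Y_i$ of the least-element map $\CC(\timesp Y_i) \to \timesp Y_i$. Since $\sigma_{G_i}(a)$ is a clopen upset of $Y_i$ and $\pi_i$ is order preserving, $\pi_i^{-1}[\sigma_{G_i}(a)]$ is an upset of $\timesp Y_i$, so \cref{lem:minv of up and down:item1} gives $m^{-1}[\pi_i^{-1}[\sigma_{G_i}(a)]] = \Box \pi_i^{-1}[\sigma_{G_i}(a)]$ inside $\CC(\timesp Y_i)$; intersecting with $\timesg Y_i$ (the actual domain of the $m$ appearing in $p_i$) yields the required identity.

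I do not anticipate any real obstacle: essentially all the work has already been carried out in \cref{thm:prod ERS,thm:univ prop prod in ERS}. The only points demanding a little care are the categorical bookkeeping --- making explicit that the dual equivalence recalled in \cref{sec:prelim Priestley Esakia} carries products to coproducts and that the natural isomorphisms $\sigma$ and $\varepsilon$ identify the two cocones --- and the minor notational subtlety of reading $\Box$ as an operation valued in clopen upsets of $\timesg Y_i$ rather than of $\CC(\timesp Y_i)$.
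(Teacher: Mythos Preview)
Your proposal is correct and follows essentially the same route as the paper: dualize the product description of \cref{thm:univ prop prod in ERS} via the equivalence $(-)^* \colon \ERS \to \GA$, compose with the natural isomorphisms $\sigma_{G_i}$, and then compute $p_i^{-1}[\sigma_{G_i}(a)]$ using $p_i = \pi_i \circ m$ together with \cref{lem:minv of up and down:item1}. Your extra remarks about $\pi_i^{-1}[\sigma_{G_i}(a)]$ being an upset and about reading $\Box$ relative to $\timesg Y_i$ are minor clarifications the paper leaves implicit.
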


\begin{proof}
Since $(-)^* \colon \ERS \to \GA$ is a dual equivalence of categories, it sends products into coproducts. Thus, \cref{thm:univ prop prod in ERS} yields that $(\timesg Y_i)^*$ together with the maps $p_i^*$ for each $i \in I$ is the coproduct of $\{ Y_i^* \mid i \in I\}$ in $\GA$. Since $\sigma_{G_i} \colon G_i \to Y_i^*$ is an isomorphism of G\"odel algebras for each $i \in I$, it is straightforward to verify that $(\timesg Y_i)^*$ together with the maps $p_i^* \circ \sigma_{G_i}$ is the coproduct of $\{ G_i \mid i \in I\}$ in $\GA$. It remains to observe that $p_i^* \circ \sigma_{G_i}$ maps $a \in G_i$ to $\Box \pi_i^{-1}[\sigma_{G_i}(a)] \in (\timesg Y_i)^*$. Since $p_i=\pi_i \circ m$, we obtain that $p_i^*(\sigma_{G_i}(a))=m^{-1}[\pi_i^{-1}[\sigma_{G_i}(a)]]$. Because $\sigma_{G_i}(a)$ is a clopen upset of $Y_i$, \cref{lem:minv of up and down:item1} yields that $p_i^*(\sigma_{G_i}(a))=\Box\pi_i^{-1}[\sigma_{G_i}(a)]$.
\end{proof}

For a G\"odel algebra $G$, let $d(G)$ be the least $n \in \mathbb{N}$ such that $G \in \GA_n$, and if there is no such $n \in \mathbb{N}$, let $d(G)=\infty$. We call $d(G)$ the \emph{depth} of $G$. It is an immediate consequence of Esakia duality for $\GA_n$ that $d(G)=d(G_*)$, where $d(G_*)$ is the depth of the Esakia root system $G_*$ as defined in \cref{sec:prelim Priestley Esakia}.
Thanks to the dual description of coproducts we just obtained, we have a way to compute the depth of coproducts of G\"odel algebras. 

\begin{theorem}\label{thm:depth coproducts}
Let $\{G_i \mid i \in I\}$ be a family of nontrivial G\"odel algebras. Then the coproduct of $\{G_i \mid i \in I\}$ has depth $1 + \sum_{i \in I} (d(G_i)-1)$, where  $d(G_i) \in \mathbb{N} \cup \{ \infty\}$ is the depth of $G_i$ for each $i \in I$.\footnote{We mean that the expression $1+\sum_{i \in I} (d(G_i)-1)$ equals $\infty$ when $J\coloneqq\{i \in I \mid d(G_i) > 1\}$ is infinite or $d(G_i) = \infty$ for some $i \in I$, and that $1+\sum_{i \in I} (d(G_i)-1)=1+\sum_{i \in J} (d(G_i)-1)$, otherwise.}
\end{theorem}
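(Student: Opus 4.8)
The plan is to pass through the dual description of coproducts in \cref{thm:coproduct Godel} and reduce the statement to a computation of the depth of the Esakia root system $\timesg Y_i$, where $Y_i = (G_i)_*$ (so that $d(Y_i) = d(G_i)$, since $d(G) = d(G_*)$). As the coproduct is $(\timesg Y_i)^*$, its depth equals $d(\timesg Y_i)$; moreover, since $\timesg Y_i$ is an upset of $\CC(\timesp Y_i)$ (shown in the proof of \cref{thm:prod ERS}), the depth of a point $C$ of $\timesg Y_i$ is the same whether computed in $\timesg Y_i$ or in $\CC(\timesp Y_i)$, and by the argument in the proof of \cref{prop:depth of chains} this common value equals the cardinality $|C|$ of $C$ as a subset of $\timesp Y_i$. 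Hence everything reduces to showing
\[
\sup\{\,|C| \mid C \in \timesg Y_i\,\} = 1 + \sum_{i \in I}\bigl(d(Y_i)-1\bigr).
\]

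For the upper bound, I would fix $C \in \timesg Y_i$ with least element $m = m(C)$ (which exists by \cref{lem:CC in Pries min and max}) and set $a_i \coloneqq \pi_i(m)$. Since $\pi_i$ is order preserving and $\pi_i[C]$ is an upset of $Y_i$, we get $\pi_i[C] = \up a_i$, and thus $|\up a_i| \le d(Y_i)$. Define $N \colon C \to \mathbb{N} \cup \{\infty\}$ by $N(x) = \sum_{i \in I} \bigl|\{\,y \in Y_i \mid a_i \le y < \pi_i(x)\,\}\bigr|$, understood as a cardinal sum. Using that $C$ and each $\up a_i$ are chains, $N$ is strictly order preserving; when $1 + \sum_i(|\up a_i| - 1)$ is finite, this forces $N$ to be an injection of $C$ into $\{0, 1, \dots, \sum_i(|\up a_i| - 1)\}$, so $|C| \le 1 + \sum_i(|\up a_i| - 1) \le 1 + \sum_i(d(Y_i) - 1)$; when $1 + \sum_i(d(Y_i) - 1) = \infty$ there is nothing to prove.

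For the lower bound, the key construction is the following: given a finite $J_0 \subseteq I$, a finite chain $\up a_i \subseteq Y_i$ for each $i \in J_0$, and a maximal element $b_i$ of $Y_i$ for each $i \notin J_0$ (maximal elements exist by \cite[Prop.~2.6]{Pri84}), one builds a finite chain $C$ in $\timesp Y_i$ by starting at the tuple with $i$-th coordinate the least point of $\up a_i$ for $i \in J_0$ and $b_i$ for $i \notin J_0$, and then, processing $J_0$ in a fixed order, pushing each coordinate $i \in J_0$ up through $\up a_i$ one cover at a time. Then $\pi_i[C] = \up a_i$ for $i \in J_0$ and $\pi_i[C] = \{b_i\} = \up b_i$ for $i \notin J_0$, so $C \in \timesg Y_i$, and $|C| = 1 + \sum_{i \in J_0}(|\up a_i| - 1)$. (Variant: if some $Y_{i_0}$ has a point $a_{i_0}$ with $\up a_{i_0}$ infinite, then $\up a_{i_0}$ in coordinate $i_0$ together with maximal elements in all other coordinates gives a closed chain in $\timesg Y_i$ of infinite cardinality.) If every $d(Y_i)$ is finite and $J \coloneqq \{i \mid d(Y_i) > 1\}$ is finite, apply the construction with $J_0 = J$ and $|\up a_i| = d(Y_i)$ to get $C \in \timesg Y_i$ with $|C| = 1 + \sum_{i \in I}(d(Y_i) - 1)$; with the upper bound this settles the finite case. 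Otherwise — some $d(Y_i) = \infty$ or $J$ infinite — one obtains chains in $\timesg Y_i$ of arbitrarily large (or infinite) cardinality, either via the variant or by letting $J_0$ range over arbitrarily large finite subsets of $J$, so $d(\timesg Y_i) = \infty$, exactly as the formula dictates in this case.

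The main obstacle is the upper bound: the defining condition of $\timesg Y_i$ (each $\pi_i[C]$ is an upset, hence a principal upset $\up a_i$ with $|\up a_i| \le d(Y_i)$) must be used to bound $N$, and the cardinal bookkeeping needs care — a strict increase in one summand of $N$ only forces a strict increase of the whole sum once we know the sum is finite, which is precisely why one must separate the finite case from the case where the claimed depth is already $\infty$. A secondary point is ensuring that the chains constructed for the lower bound actually lie in $\timesg Y_i$, which is the reason the unused coordinates must be pinned to maximal elements.
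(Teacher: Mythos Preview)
Your overall strategy matches the paper's: reduce via \cref{thm:coproduct Godel} and \cref{prop:depth of chains} to showing $d(\timesg Y_i)=1+\sum_i(d(Y_i)-1)$, then establish both inequalities. Your lower-bound ``staircase'' chain is essentially the paper's Claim (the paper writes the chain as $C_1\cup\dots\cup C_n$ where each $C_j$ moves only the $i_j$-th coordinate through $\up z_{i_j}$, which is exactly your ``push each coordinate up one cover at a time''). The upper bound, however, is argued differently: for an arbitrary $K\in\timesg Y_i$ the paper builds a \emph{surjection} onto $K$ from one of the staircase chains of known size, whereas you build a strictly order-preserving rank function $N\colon C\to\{0,\dots,\sum_i(|\up a_i|-1)\}$. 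Your argument is more direct and avoids the paper's somewhat delicate definition of the surjection $f$; the paper's version has the mild advantage that it never manipulates a possibly infinite sum.

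One small wrinkle in your case split for the lower bound: when some $d(Y_{i_0})=\infty$ but every point of $Y_{i_0}$ has finite depth (the supremum is not attained) and $J$ happens to be finite, neither your ``variant'' (which needs a point of infinite depth) nor ``letting $J_0$ range over arbitrarily large finite subsets of $J$'' applies as written. The paper treats this sub-case explicitly. The fix is immediate from your own construction---take $J_0=\{i_0\}$ and let $a_{i_0}$ range over points of $Y_{i_0}$ of arbitrarily large finite depth---so this is a presentational gap rather than a mathematical one, but you should spell it out.
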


\begin{proof}
Let $Y_i=(G_i)_*$ be the Esakia root system dual to $G_i$. \cref{thm:coproduct Godel} implies that the depth of the coproduct of $\{G_i \mid i \in I\}$ coincides with the depth of $\timesg Y_i$. Since $d(G_i)=d(Y_i)$ for every $i \in I$, it is sufficient to show that $d(\timesg Y_i)=1 + \sum_{i \in I} (d(Y_i)-1)$.
We first prove the following technical fact.

\begin{claim}\label{claim:chains in products}
Let $w_i \in \max Y_i$ for every $i \in I$. Let also $i_1, \dots, i_n \in I$ and $z_{i_j} \in Y_{i_j}$ such that $z_{i_j} \le w_{i_j}$ for each $j=1, \dots, n$.
Define $C_j \subseteq \timesp Y_i$ for each $j=1, \dots, n$ as follows
\begin{align*}
(y_i)_{i \in I} \in C_j \iff \begin{cases}
y_i=z_i &\text{if } i \in \{i_1, \dots, i_{j-1}\},\\
y_i \in \up z_{i_j} &\text{if } i=i_j,\\
y_i=w_i &\text{if } i \notin \{i_1, \dots, i_j\}.
\end{cases}
\end{align*}
Then $C \coloneqq C_1 \cup \dots \cup C_n$ is a point of depth $1 + \sum_{j=1}^n (d(z_{i_j})-1)$ in $\timesg Y_i$, where $d(z_{i_j})$ is the depth of $z_{i_j}$ in $Y_{i_j}$.
\end{claim}

\begin{proofclaim}
For every $j$, we have
\[
C_j=\bigcap \{ \pi_i^{-1}[z_i] \mid  i \in \{i_1, \dots, i_{j-1}\}\} \cap \pi_{i_j}^{-1}[\up z_{i_j}] \cap \bigcap \{ \pi_i^{-1}[w_i] \mid  i \notin \{i_1, \dots, i_j\}\}, 
\]
and hence $C_j$ is a closed subset of $\timesp Y_i$ because $\{ z_i\}$, $\up z_{i_j}$, and $\{w_i\}$ are all closed and $\pi_i$ is continuous for every $i \in I$. 
Since $Y_{i_j}$ is a root system, $\up z_{i_j}$ is a chain, and hence $C_j \in \CC(\timesp Y_i)$. 
If $1 \le j < n$, then the least element of $C_j$ is the greatest element of $C_{j+1}$. 
Therefore, $C = C_1 \cup \dots \cup C_n$ is a chain in $\timesp Y_i$. Since $C_1, \dots, C_n$ are closed in $\timesp Y_i$, we obtain that $C \in \CC(\timesp Y_i)$. By the definition of $C_1, \dots, C_n$, it follows that $\pi_i[C]=\up z_i$ if $i \in \{i_1, \dots, i_n\}$, and $\pi_i[C]=\{w_i\}$, otherwise. Therefore, for every $i \in I$ the set $\pi_i[C]$ is an upset of $Y_i$, and hence $C \in \timesg Y_i$. By \cref{prop:depth of chains}, the depth of $C$ in $\bigotimes_i Y_i$ coincides with its size. If $d(z_{i_j}) = \infty$ for some $j$, then $C$ is infinite, and so has depth $\infty=1 + \sum_{j=1}^n (d(z_{i_j})-1)$ in $\timesg Y_i$.  Otherwise, $C$ has size $1+\sum_{j=1}^n (d(z_{i_j})-1)$. Indeed, each $C_j$ has size $d(z_{i_j})$ and the least element of $C_j$ coincides with the greatest element of $C_{j+1}$ for every $j=1, \dots, n-1$. This shows that $C$ is an element of $\timesg Y_i$ of depth $1 + \sum_{j=1}^n (d(z_{i_j})-1)$.
\end{proofclaim}

We first consider the case in which there is $k \in I$ with $d(Y_k)=\infty$. Then there is an element of $Y_k$ of infinite depth or there are elements of $Y_k$ of arbitrary large finite depth. Suppose first that there is $z_k \in Y_k$ such that $d(z_k)=\infty$. Since every $G_i$ is nontrivial, all the $Y_i$ are nonempty. Then $\max Y_i$ and $\max (\up z_k)$ are nonempty by~\cite[Prop.~2.6]{Pri84}. So, we can pick $w_i \in \max Y_i$ for every $i \in I$ so that $z_k \le w_k$. By \cref{claim:chains in products}, there is $C \in \timesg Y_i$ of infinite depth. Thus, $d(\timesg Y_i)=\infty$. Suppose now that for each $n \in \mathbb{N}$ there is $z_k \in Y_k$ such that $d(z_k) \ge n$. By arguing as in the previous case, we obtain that there are elements of $\timesg Y_i$ of arbitrarily large finite depth, and hence $d(\timesg Y_i)=\infty$. So, we can assume that $d(Y_i) \ne \infty$ for every $i \in I$. Suppose there are infinitely many $i \in I$ such that $d(Y_i)>1$. Then for each $n \in \mathbb{N}$ we can find $i_1, \dots, i_n \in I$ and $z_{i_j} \in Y_{i_j}$ such that $d(z_{i_j}) \ge 2$. \cref{claim:chains in products} allows us to construct $C \in \timesg Y_i$ of depth $1 + \sum_{j=1}^n (d(z_{i_j})-1) > n$. It follows that $d(\timesg Y_i)=\infty$. The last case to consider is when $d(Y_i) \ne \infty$ for every $i \in I$ and $\{i \in I \mid d(Y_i) > 1\}$ is finite. Let $\{i \in I \mid d(Y_i) > 1\} = \{ i_1, \dots, i_n\}$ and $z_{i_j} \in Y_{i_j}$ such that $d(z_{i_j})=d(Y_{i_j})$. Then \cref{claim:chains in products} implies that there is $C \in  \timesg Y_i$ of depth $1 + \sum_{j=1}^n (d(z_{i_j})-1)$, and hence
\[
d(\textstyle{\timesg} Y_i) \ge 1 + \sum_{j=1}^n (d(z_{i_j})-1) = 1 + \sum_{j=1}^n (d(Y_{i_j})-1).
\] 
We now show that $d(\timesg Y_i) \le 1 + \sum_{j=1}^n (d(Y_{i_j})-1)$. Let $K \in \timesg Y_i$. We prove that the size of $K$ is smaller or equal to $1+\sum_{j=1}^n (d(Y_{i_j})-1)$. If $i \notin \{i_1, \dots, i_n \}$, then $\pi_i[K]=\{w_i\}$ with $w_i \in \max Y_i$ because $d(Y_i)=1$. If $j=1, \dots, n$, let $\pi_{i_j}[K]=\up z_{i_j}$ for some $z_{i_j} \in Y_{i_j}$. Thus, every $\pi_i[K]$ is finite, and it is a singleton for all but finitely many $i \in I$. Then $K$ is a finite chain because $K \subseteq \prod_i \pi_i[K]$. 
Let 
\[
W=\{ (j,y) \mid j \in \{1, \dots, n\} \text{ and } y \in \up z_{i_j} \setminus \{z_{i_j}\}\}.
\]
For every $(j,y) \in W$ we have that $\pi_{i_j}^{-1}[y] \cap K$ is a finite nonempty chain because $K$ is a finite chain and $y \in \up z_{i_j} = \pi_{i_j}[K]$. Recall that $m(K)$ denotes the least element of $K$. Define $f \colon W \to K \setminus m(K)$ by mapping $(j,y) \in W$ to the least element of $\pi_{i_j}^{-1}[y] \cap K$, which belongs to $K \setminus m(K)$ because $\pi_{i_j}(m(K))=z_{i_j}$ and $y \neq z_{i_j}$. We show that $f$ is onto. Let $\overline{y}=(y_i)_{i \in I} \in K \setminus m(K)$. Then there exists $j \in \{1, \dots, n\}$ such that the predecessor of $\overline{y}$ in $K$ differs from $\overline{y}$ in the $i_j$-th component. So, $y_{i_j} \neq z_{i_j}$, and hence $(j,y_{i_j}) \in W$. From the definition of $f$ it follows that $f(j,y_{i_j}) = \overline{y}$. Thus, $f \colon W \to K \setminus m(K)$ is onto. Then the size of $K \setminus m(K)$ is smaller or equal to the cardinality of $W$, which is $\sum_{j=1}^n (d(z_{i_j})-1)$.
Since $d(z_{i_j}) \le d(Y_{i_j})$ for every $j$, we obtain that the size of $K$ is smaller or equal to
$1 + \sum_{j=1}^n (d(Y_{i_j})-1)$ for every $K \in \timesg Y_i$.
By \cref{prop:depth of chains}, the size of $K$ coincides with its depth in $\timesg Y_i$, so we get $d(\timesg Y_i) \le 1 + \sum_{j=1}^n (d(Y_{i_j})-1)$. We assumed that $d(Y_i) \ne \infty$ for every $i \in I$ and that $\{i \in I \mid d(Y_i) > 1\}$ is finite, so $1 + \sum_{j=1}^n (d(Y_{i_j})-1) = 1 + \sum_{i \in I} (d(Y_i)-1)$. It follows that $d(\timesg Y_i) \le 1 + \sum_{i \in I} (d(Y_i)-1)$. This concludes the proof that $d(\timesg Y_i)=1+\sum_{i \in I} (d(Y_i)-1)$.  By what we observed at the beginning of the proof, it follows that the coproduct of $\{G_i \mid i \in I\}$ has depth $1 + \sum_{i \in I} (d(G_i)-1)$.
\end{proof}

We end this section with the dual description of coproducts in $\GA_n$. If $\{Y_i \mid i \in I\}$ is a family of Esakia root systems, we denote by $(\timesg Y_i)_n$ the set of elements of $\timesg Y_i$ of depth less or equal to $n$ equipped with the subspace topology and order induced by $\timesg Y_i$. It is straightforward to see that 
\[
(\textstyle{\timesg} Y_i)_n = \{ C \in \CC_n\big(\textstyle{\timesp} Y_i\big) \mid \pi_i[C]  \text{ is an upset of } Y_i \text{ for every } i \in I\}.
\]

The following theorems are immediate consequences of \cref{cor:subn preserving limits,thm:Esakia duality GAn,thm:univ prop prod in ERS}.

\begin{theorem}
Let $n \in \mathbb{N}$ and $\{Y_i \mid i \in I\} \subseteq \ERS_n$. Then the product of $\{Y_i \mid i \in I\}$ in $\ERS_n$ is given by $(\timesg Y_i)_n$ together with the maps $p_i \colon (\timesg Y_i)_n \to Y_i$ sending $C \in (\timesg Y_i)_n$ to $\pi_i(m(C)) \in Y_i$.
\end{theorem}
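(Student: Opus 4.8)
The plan is to derive this result purely formally from the machinery already in place, without re-running any of the topological arguments. The key observation is that $(-)_n \colon \ERS \to \ERS_n$ is a right adjoint by \cref{prop:ERSn to ER right adjoint}, hence preserves all products by \cref{cor:subn preserving limits}. Since $\timesg Y_i$ together with the projections $p_i$ is the product of $\{Y_i \mid i \in I\}$ in $\ERS$ by \cref{thm:univ prop prod in ERS}, applying the functor $(-)_n$ shows that $(\timesg Y_i)_n$ together with the maps $(p_i)_n$ is the product of $\{(Y_i)_n \mid i \in I\}$ in $\ERS_n$.

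The next step is to observe that if $Y_i \in \ERS_n$ to begin with, then $(Y_i)_n = Y_i$, since by definition $(Y_i)_n$ is the subspace of points of depth at most $n$ and every point of $Y_i$ already has depth at most $n$. Under this identification $(p_i)_n$ is literally the restriction of $p_i$ to $(\timesg Y_i)_n$, which by the definition of $p_i$ sends $C$ to $\pi_i(m(C))$; note this is well-defined because $(\timesg Y_i)_n \subseteq \timesg Y_i$, so each such $C$ has a least element $m(C)$ by \cref{lem:CC in Pries min and max}. This yields exactly the statement. Finally, I would record (or cite the displayed identity immediately preceding the theorem) the description
\[
(\textstyle{\timesg} Y_i)_n = \{ C \in \CC_n\big(\textstyle{\timesp} Y_i\big) \mid \pi_i[C] \text{ is an upset of } Y_i \text{ for every } i \in I\},
\]
which follows from the definition of $\CC_n$ and \cref{prop:depth of chains}, so that the reader sees $(\timesg Y_i)_n$ as a concrete collection of finite chains rather than an abstract subobject.

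I do not anticipate a genuine obstacle here, as the entire content has been front-loaded into \cref{thm:prod ERS}, \cref{thm:univ prop prod in ERS}, \cref{prop:ERSn to ER right adjoint}, and \cref{cor:subn preserving limits}; the only mild subtlety worth stating explicitly is the identification $(Y_i)_n = Y_i$ and the verification that the product $(Y_i)_n$ obtained by applying the right adjoint has the claimed projection maps. The proof is therefore a two-line formal argument: products are preserved by the right adjoint $(-)_n$, and $(-)_n$ fixes the objects of $\ERS_n$. If desired one could also give a direct argument mirroring the proof of \cref{thm:univ prop prod in ERS} — namely, that any cone $\{f_i \colon Z \to Y_i\}$ with $Z \in \ERS_n$ factors uniquely through $\timesg Y_i$ by \cref{thm:univ prop prod in ERS}, and the mediating map $g$ automatically lands in $(\timesg Y_i)_n$ because $g[Z] \subseteq (\timesg Y_i)_n$ follows from $Z$ having depth at most $n$ and $g$ being a p-morphism — but the adjoint-functor argument is cleaner and is the one the paragraph preceding the statement already signals.
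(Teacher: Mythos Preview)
Your proposal is correct and matches the paper's approach exactly: the paper states that the theorem is an immediate consequence of \cref{cor:subn preserving limits} and \cref{prop:ERSn to ER right adjoint} (together with \cref{thm:univ prop prod in ERS}), which is precisely the adjoint-functor argument you outline. The identification $(Y_i)_n = Y_i$ for $Y_i \in \ERS_n$ and the resulting description of the projections are the only points to spell out, and you handle them correctly.
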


\begin{theorem}\label{thm:coproduct GAn}
Let $\{ G_i \mid i \in I\}$ be a family of $\GA_n$-algebras and $Y_i=(G_i)_*$ their Esakia duals. Then the coproduct of $\{ G_i \mid i \in I\}$ in $\GA_n$ is given by $((\timesg Y_i)_n)^*$ together with the maps sending $a \in G_i$ to $\Box \pi_i^{-1}[\sigma_{G_i}(a)] \cap (\timesg Y_i)_n \in ((\timesg Y_i)_n)^*$ for each $i \in I$. 
\end{theorem}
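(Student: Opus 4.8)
The plan is to obtain \cref{thm:coproduct GAn} by reflecting the description of coproducts in $\GA$ provided by \cref{thm:coproduct Godel} into the subvariety $\GA_n$. Since $\GA_n$ is a full, reflective subvariety of $\GA$, the inclusion $\GA_n \hookrightarrow \GA$ has a left adjoint $L$, and for a family $\{G_i \mid i \in I\}$ of $\GA_n$-algebras the coproduct in $\GA_n$ is computed as $L\big(\coprod^{\GA} G_i\big)$, with the coproduct injections obtained by composing those of $\coprod^{\GA} G_i$ with the unit $\eta \colon \coprod^{\GA} G_i \to L\big(\coprod^{\GA} G_i\big)$ (here one uses that the inclusion is fully faithful). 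So the first step is simply to recall this categorical fact about reflective subcategories.

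The second step is to dualize. Put $Y_i = (G_i)_*$, so each $Y_i \in \ERS_n$ and, by \cref{thm:coproduct Godel}, $\coprod^{\GA} G_i = (\timesg Y_i)^*$ with injections $a \mapsto \Box \pi_i^{-1}[\sigma_{G_i}(a)]$. Under Esakia duality the left adjoint $L$ corresponds to the right adjoint $(-)_n \colon \ERS \to \ERS_n$ of \cref{prop:ERSn to ER right adjoint}, so $L\big((\timesg Y_i)^*\big) = \big((\timesg Y_i)_n\big)^*$; here we use \cref{thm:prod ERS} to know that $\timesg Y_i$ is an Esakia root system, so that $(\timesg Y_i)_n$ is defined. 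Moreover the unit $\eta$ at $(\timesg Y_i)^*$ corresponds to the counit of the $\big(\ERS_n \hookrightarrow \ERS,\,(-)_n\big)$-adjunction at $\timesg Y_i$, which is the inclusion $e \colon (\timesg Y_i)_n \hookrightarrow \timesg Y_i$; dually, $\eta$ is therefore the map $e^* = e^{-1} \colon (\timesg Y_i)^* \to \big((\timesg Y_i)_n\big)^*$, that is, $U \mapsto U \cap (\timesg Y_i)_n$.

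Finally, composing the injections of \cref{thm:coproduct Godel} with this restriction map shows that the coproduct injection $G_i \to \big((\timesg Y_i)_n\big)^*$ sends $a \in G_i$ to $\Box \pi_i^{-1}[\sigma_{G_i}(a)] \cap (\timesg Y_i)_n$, which is exactly the asserted map, and this completes the proof. One could equally well re-run the proof of \cref{thm:coproduct Godel} with $\ERS_n$ in place of $\ERS$: by \cref{cor:subn preserving limits} the product in $\ERS_n$ is $(-)_n$ applied to the product in $\ERS$, namely $(\timesg Y_i)_n$ with the restricted projections $p_i$ by \cref{thm:univ prop prod in ERS}, and one then dualizes via Esakia duality for $\GA_n$, identifying the injections through $\sigma_{G_i}$ and \cref{lem:minv of up and down:item1} just as in the $\GA$ case.

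I expect the only point requiring genuine care to be the bookkeeping around the two adjunctions: verifying that the reflector $L$ on $\GA$ is dual to $(-)_n$ on $\ERS$ and, correspondingly, that the unit of one adjunction is carried by the dual equivalence to the counit of the other, so that $\eta$ dualizes precisely to the inclusion $(\timesg Y_i)_n \hookrightarrow \timesg Y_i$. Once this is pinned down, everything else is routine transport along the dual equivalence, exactly as in \cref{thm:coproduct Godel}.
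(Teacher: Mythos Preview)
Your proposal is correct. The alternative route you sketch at the end---using \cref{cor:subn preserving limits} to identify the product in $\ERS_n$ as $(\timesg Y_i)_n$ with the restricted projections, and then dualizing via Esakia duality for $\GA_n$ exactly as in the proof of \cref{thm:coproduct Godel}---is precisely the paper's argument (the paper simply records the theorem as an immediate consequence of \cref{cor:subn preserving limits}, \cref{thm:coproduct Godel}, and \cref{prop:ERSn to ER right adjoint}).

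Your main route, via the algebraic reflector $L \colon \GA \to \GA_n$ and its dual correspondence with $(-)_n$, is a legitimate and essentially equivalent variant: rather than transporting the product along $(-)_n$ on the space side and then dualizing, you first dualize and then transport the coproduct along $L$ on the algebra side. The bookkeeping you flag---that under the dual equivalence the left adjoint $L$ corresponds to the right adjoint $(-)_n$, and that the unit of $L \dashv \mathrm{incl}$ at $(\timesg Y_i)^*$ is the dual $e^*$ of the counit inclusion $e \colon (\timesg Y_i)_n \hookrightarrow \timesg Y_i$---is correct and is the only point that needs to be made explicit. Both approaches yield the same injections $a \mapsto \Box \pi_i^{-1}[\sigma_{G_i}(a)] \cap (\timesg Y_i)_n$; the paper's route is slightly more direct since it stays entirely on the space side until the final dualization.
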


Since $\GA_n$ is a subvariety of $\GA$, the inclusion $\GA_n \hookrightarrow \GA$ is a right adjoint (see, e.g., \cite[Cor.~9.4.15]{Ber15}), and hence all limits coincide in $\GA_n$ and $\GA$. This is not true for colimits. The following corollary characterizes when coproducts in $\GA_n$ and $\GA$ coincide.

\begin{corollary}
Let $\{ G_i \mid i \in I\}$ be a family of nontrivial $\GA_n$-algebras with $n \ge 1$. Then the coproducts of $\{ G_i \mid i \in I\}$ in $\GA$ and $\GA_n$ are isomorphic iff $\sum_{i \in I} (d(G_i)-1) \le n-1$.
\end{corollary}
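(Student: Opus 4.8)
The plan is to reduce everything to the depth formula of \cref{thm:depth coproducts}. Write $G$ for the coproduct of $\{G_i \mid i \in I\}$ in $\GA$ and $G'$ for the coproduct of $\{G_i \mid i \in I\}$ in $\GA_n$. Since $G' \in \GA_n \subseteq \GA$, the coproduct injections $G_i \to G'$ are in particular $\GA$-morphisms, so the universal property of $G$ provides a canonical $\GA$-morphism $\varphi \colon G \to G'$ commuting with the injections (equivalently, $\varphi$ is the reflection morphism of $G$ into the reflective subvariety $\GA_n$, i.e.\ the unit at $G$ of the adjunction whose left adjoint to $\GA_n \hookrightarrow \GA$ was recalled above). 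The key claim to establish is that $G$ and $G'$ are isomorphic if and only if $G \in \GA_n$.

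To prove this claim I would argue as follows. If $G \in \GA_n$, then $G$ together with its injections is also a colimit cocone in $\GA_n$: any cocone over $\{G_i \mid i \in I\}$ with vertex in $\GA_n$ is a cocone in $\GA$, and the mediating $\GA$-morphism out of $G$ is automatically a $\GA_n$-morphism because $\GA_n \hookrightarrow \GA$ is a full subcategory; hence $\varphi$ is an isomorphism by uniqueness of coproducts, and in particular $G \cong G'$. Conversely, if $G$ is merely isomorphic to $G'$, then $G$ is isomorphic to an object of $\GA_n$, and since $\GA_n$ is a variety it is closed under isomorphisms, so $G \in \GA_n$.

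It then remains to unwind the condition $G \in \GA_n$. By the definition of the depth of a G\"odel algebra together with the inclusions $\GA_k \subseteq \GA_m$ for $k \le m$, we have $G \in \GA_n$ iff $d(G) \le n$. Now \cref{thm:depth coproducts} applies, since all $G_i$ are nontrivial, and gives $d(G) = 1 + \sum_{i \in I}(d(G_i)-1)$, where moreover each $d(G_i)$ lies in $\{1, \dots, n\}$ because $G_i$ is a nontrivial $\GA_n$-algebra. Therefore $G$ and $G'$ are isomorphic iff $1 + \sum_{i \in I}(d(G_i)-1) \le n$, that is, iff $\sum_{i \in I}(d(G_i)-1) \le n-1$ (interpreting the sum as $\infty$ whenever $\{i \in I \mid d(G_i) > 1\}$ is infinite, as in the footnote to \cref{thm:depth coproducts}), which is the asserted criterion. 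One could instead argue dually: by \cref{thm:coproduct Godel,thm:coproduct GAn} the two coproducts are dual to $\timesg Y_i$ and $(\timesg Y_i)_n$ with $Y_i = (G_i)_*$, and since depth is an isomorphism invariant of an Esakia root system these spaces are isomorphic iff $(\timesg Y_i)_n = \timesg Y_i$, i.e.\ iff $d\big(\timesg Y_i\big) \le n$, which (by the computation in the proof of \cref{thm:depth coproducts}) is the same inequality.

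I do not anticipate any genuine obstacle: the substantive work is already carried out in \cref{thm:depth coproducts}, and what is left is the soft observation that the $\GA$-coproduct coincides with the $\GA_n$-coproduct exactly when it happens to lie in $\GA_n$. The only point needing a little care is to phrase the "if" direction so that it uses fullness of $\GA_n \hookrightarrow \GA$, and the "only if" direction so that it uses closure of the variety $\GA_n$ under isomorphisms, rather than tacitly assuming that $\varphi$ itself is the isomorphism.
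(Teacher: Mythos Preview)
Your proof is correct and lands on exactly the same reduction as the paper: both arguments boil down to ``the two coproducts coincide iff the $\GA$-coproduct already lies in $\GA_n$, i.e.\ has depth at most $n$,'' and then invoke the depth formula from \cref{thm:depth coproducts}. The only cosmetic difference is that the paper argues this dually from the outset (via \cref{thm:coproduct Godel,thm:coproduct GAn}, noting that $\timesg Y_i$ and $(\timesg Y_i)_n$ are isomorphic iff $d(\timesg Y_i)\le n$), which is precisely the alternative you sketch in your last paragraph, whereas your primary argument phrases the same step algebraically via fullness of $\GA_n\hookrightarrow\GA$ and closure under isomorphisms.
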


\begin{proof}
Let $Y_i=(G_i)_*$ be the Esakia root system dual to $G_i$ for each $i \in I$. By \cref{thm:coproduct Godel}, the coproduct of $\{ G_i \mid i \in I\}$ in $\GA$ is dual to $\timesg Y_i$, while \cref{thm:coproduct GAn} yields that the coproduct of $\{ G_i \mid i \in I\}$ in $\GA_n$ is dual to $(\timesg Y_i)_n$. Thus, the two coproducts are isomorphic iff $\timesg Y_i$ and $(\timesg Y_i)_n$ are isomorphic Esakia root systems, which happens exactly when $d(\timesg Y_i) \le n$. \cref{thm:depth coproducts} yields that $d(\timesg Y_i)=1 + \sum_{i \in I} (d(G_i)-1)$. Therefore, the two coproducts coincide when $1 + \sum_{i \in I} (d(G_i)-1) \le n$, which is equivalent to $\sum_{i \in I} (d(G_i)-1) \le n-1$.
\end{proof}

\section{Free G\"odel algebras as bi-Heyting algebras}\label{sec:bi-Heyting}
 
A distributive lattice is called a \emph{co-Heyting algebra} when its order dual is a Heyting algebra, and a \emph{bi-Heyting algebra} is a Heyting algebra that is also a co-Heyting algebra. It is shown in~\cite{Ghi92} that Heyting algebras free over finite distributive lattices are always bi-Heyting algebras. This result implies the surprising fact that all free Heyting algebras with finitely many free generators are bi-Heyting algebras. 

In this section we provide a necessary and sufficient condition for G\"odel algebras free over distributive lattices to be bi-Heyting algebras, and obtain as a consequence that any free G\"odel algebra is a bi-Heyting algebra. We end the section by showing that the situation is very different in the bounded depth setting, as every free $\GA_n$-algebra with infinitely many free generators is not a bi-Heyting algebra. 

We say that a Priestley space $X$ is a \emph{co-Esakia space} when $\up V$ is clopen for every $V$ clopen subset of $X$, and that $X$ is a \emph{bi-Esakia space} when it is at the same time an Esakia and a co-Esakia space. 
Observe that the Priestley space $X$ described in
\cref{ex:X and CCX} is a co-Esakia space that is not an Esakia space, while $\CC(X)$ from the same example is a bi-Esakia space.
There are analogues of Esakia duality for co-Heyting and bi-Heyting algebras. 
For our purposes, we only need the following well-known proposition; we sketch its proof due to the lack of a reference.

\begin{proposition}\plabel{prop:Esakia duality for cobiHeyting}
Let $L$ be a distributive lattice and $X=L_*$ its dual Priestley space.
\begin{enumerate}
\item $L$ is a co-Heyting algebra iff $X$ is a co-Esakia space.
\item $L$ is a bi-Heyting algebra iff $X$ is a bi-Esakia space.
\end{enumerate}
\end{proposition}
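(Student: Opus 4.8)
The plan is to reduce both statements to Esakia duality (recalled earlier in this section) by passing to order duals. Write $L^{\partial}$ for the order dual of $L$, which is again a bounded distributive lattice, and write $X^{\partial}$ for the set $X$ equipped with the same topology but the reversed order. First I would record that $X^{\partial}$ is again a Priestley space: the Priestley separation axiom is self-dual up to complementation, since if $x \nleq y$ in $X^{\partial}$, i.e.\ $y \nleq x$ in $X$, then a clopen upset $U$ of $X$ with $y \in U$ and $x \notin U$ yields the clopen downset $X \setminus U$ of $X$, which is a clopen upset of $X^{\partial}$ containing $x$ but not $y$.

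Next I would identify $(L^{\partial})_{*}$ with $X^{\partial}$. Prime filters of $L^{\partial}$ are exactly the prime ideals of $L$, and complementation $P \mapsto L \setminus P$ is an inclusion-reversing bijection between the prime filters and the prime ideals of $L$; hence it induces an order isomorphism $(L_*)^{\partial} \to (L^{\partial})_{*}$. Because the subbasis $\{\sigma_L(a),\, L_* \setminus \sigma_L(a) \mid a \in L\}$ defining the Priestley topology is invariant under swapping the two families, this bijection is also a homeomorphism. Thus $(L^{\partial})_{*} \cong X^{\partial}$ in $\Pries$.

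Now I would invoke Esakia duality: $L$ is a co-Heyting algebra, i.e.\ $L^{\partial}$ is a Heyting algebra, if and only if $(L^{\partial})_{*} \cong X^{\partial}$ is an Esakia space. It then remains to translate ``$X^{\partial}$ is an Esakia space'' into a statement about $X$. By definition $X^{\partial}$ is an Esakia space iff the downset in $X^{\partial}$ of every clopen subset is clopen; but taking downsets in $X^{\partial}$ is the same as taking upsets in $X$, and the topology is unchanged, so this says precisely that $\up V$ is clopen for every clopen $V \subseteq X$, i.e.\ that $X$ is a co-Esakia space. This proves (1). For (2), a bi-Heyting algebra is by definition both a Heyting and a co-Heyting algebra, and a bi-Esakia space is both an Esakia and a co-Esakia space, so (2) follows by combining Esakia duality with part (1).

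The only mildly delicate point is the bookkeeping in the identification $(L^{\partial})_{*} \cong X^{\partial}$ -- in particular verifying that complementation is a homeomorphism for the two Priestley topologies -- but this is routine once one observes the symmetry of the defining subbasis.
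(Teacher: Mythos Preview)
Your argument is correct and follows essentially the same route as the paper's proof: both reduce to Esakia duality by passing to the order dual $L^\partial$ and identifying its Priestley dual with $X^\partial$, then observing that $X^\partial$ is Esakia iff $X$ is co-Esakia. You supply more detail on the identification $(L^\partial)_* \cong X^\partial$ via the complementation bijection between prime filters and prime ideals, whereas the paper simply declares this check straightforward, but the underlying strategy is identical.
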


\begin{proof}
Let $L^\partial$ be the order dual of $L$, and $X^\partial$ the Priestley space that is order dual to $X$ and is equipped with the same topology. It is straightforward to check that the Priestley dual of $L^\partial$ is $X^\partial$, and that $X$ is co-Esakia iff $X^\partial$ is Esakia. By Esakia duality, $L^\partial$ is a Heyting algebra iff $X^\partial$ is an Esakia space, so $L$ is a co-Heyting algebra iff $X$ is a co-Esakia space. It also follows that $L$ is a bi-Heyting algebra iff $X$ is a bi-Esakia space.
\end{proof}

Our next goal is to show that $X$ is a co-Esakia space iff $\CC(X)$ is a bi-Esakia space. This will provide a necessary and sufficient condition for the G\"odel algebra free over a distributive lattice to be a bi-Heyting algebra. In order to accomplish this, we need to introduce an operation that will help us in computing the upsets of clopen subsets of $\CC(X)$. 

\begin{definition}
Let $X$ be a poset and $\{A_1, \dots, A_n\}$ a finite collection of subsets of $X$. We define the subset $\upup\{A_1,\dots,A_n\}$ of $X$ by induction on $n$. We set 
\[
\upup \varnothing \coloneqq X
\]
and for $n \ge 1$, 
\[
\upup\{A_1,\dots,A_n\} \coloneqq \bigcup_{i=1}^n\up(\upup\{A_1, \dots, \widehat{A_i}, \dots, A_n\} \cap A_i),
\]
where $\{A_1, \dots, \widehat{A_i}, \dots, A_n\}$ denotes the set obtained from $\{A_1, \dots, A_n\}$ by removing $A_i$.
\end{definition}

Since $\upup \{A\} = \up A$, the operation $\upup$ can be thought of as an extension of the operation $\up$ of taking the upset of a subset of $X$. We will use $\upup$ to compute upsets of clopens in $\CC(X)$. But first, we need to prove some properties of this operation.

\begin{lemma}\plabel{lem:upup properties}
Let $X$ be a poset, $x \in X$, and $A_1, \dots, A_n \subseteq X$.
\begin{enumerate}
\item\label[lem:upup properties]{lem:upup properties:item1} $x \in \upup\{A_1,\dots,A_n\}$ iff there is a chain $C$ in $X$ such that $C \subseteq \down x$ and $C \cap A_i \neq \varnothing$ for every $i = 1, \dots, n$.
\item\label[lem:upup properties]{lem:upup properties:item2} If $X$ is a co-Esakia space and $A_1, \dots, A_n$ are clopen in $X$, then $\upup\{A_1,\dots,A_n\}$ is clopen.
\end{enumerate}
\end{lemma}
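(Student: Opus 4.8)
The plan is to prove both parts by induction on $n$, reading off the recursive definition of $\upup$, with base case $n = 0$ supplied by $\upup\varnothing = X$.

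For \eqref{lem:upup properties:item1} the base case is immediate: every $x$ lies in $X = \upup\varnothing$, and the empty chain is contained in $\down x$ and vacuously meets every member of the empty collection. For the inductive step I would expand
\[
\upup\{A_1,\dots,A_n\} = \bigcup_{i=1}^n \up\bigl(\upup\{A_1,\dots,\widehat{A_i},\dots,A_n\} \cap A_i\bigr).
\]
Left to right: if $x$ lies in the $i$-th term, there is $y \le x$ with $y \in A_i$ and $y \in \upup\{A_1,\dots,\widehat{A_i},\dots,A_n\}$; applying the inductive hypothesis to this $(n-1)$-element collection at the point $y$ yields a chain $C' \subseteq \down y$ meeting every $A_j$ with $j \ne i$, and then $C \coloneqq C' \cup \{y\}$ is a chain contained in $\down x$ meeting all of $A_1, \dots, A_n$. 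Right to left: given a chain $C \subseteq \down x$ with $C \cap A_i \ne \varnothing$ for each $i$, I would pick representatives $a_i \in C \cap A_i$; since $C$ is totally ordered, some $a_k$ dominates all the others, and $C \cap \down a_k$ is a chain contained in $\down a_k$ that still meets every $A_j$ with $j \ne k$, so the inductive hypothesis gives $a_k \in \upup\{A_1,\dots,\widehat{A_k},\dots,A_n\}$; combined with $a_k \in A_k$ and $a_k \le x$, this places $x$ in the $k$-th term.

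For \eqref{lem:upup properties:item2} I would again induct on $n$: $X$ is clopen, so the case $n = 0$ is clear, and for $n \ge 1$ each $\upup\{A_1,\dots,\widehat{A_i},\dots,A_n\}$ is clopen by the inductive hypothesis, hence so is its intersection with the clopen set $A_i$; the co-Esakia hypothesis then makes $\up$ of that set clopen, and a finite union of clopen sets is clopen.

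The only step requiring genuine care is the right-to-left direction of the inductive step in \eqref{lem:upup properties:item1}: one must single out a maximal element $a_k$ among the finitely many chosen representatives and truncate the chain to $C \cap \down a_k$, so that the inductive hypothesis is invoked at precisely the point that feeds back into the recursion. The remaining arguments are routine unwinding of the definition, and \eqref{lem:upup properties:item2} is then a formal consequence of the co-Esakia property together with closure of the clopen sets under finite Boolean operations.
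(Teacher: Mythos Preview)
Your proposal is correct and follows essentially the same inductive approach as the paper's proof. The only cosmetic difference is in the right-to-left direction of \eqref{lem:upup properties:item1}: you apply the inductive hypothesis at $a_k$ using the chain $C \cap \down a_k$, whereas the paper uses the finite chain $\{a_1,\dots,a_n\}$ of chosen representatives; both witnesses work for the same reason.
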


\begin{proof}
\eqref{lem:upup properties:item1}. 
We prove the claim by induction on $n$. The case $n=0$ is clear. We assume that the claim is true for $n-1$ with $n \ge 1$ and show that it is true for $n$. We first prove the left-to-right implication. Let $x \in \upup\{A_1,\dots,A_n\}$. The definition of $\upup\{A_1,\dots,A_n\}$ implies that there is $i \le n$ such that $x \in \up(\upup\{A_1, \dots, \widehat{A_i}, \dots, A_n\} \cap A_i)$. Then there is $y \le x$ with $y \in \upup\{A_1, \dots, \widehat{A_i}, \dots, A_n\} \cap A_i$. By the induction hypothesis, there exists a chain $K$ in $X$ such that $K \subseteq \down y$ and $K \cap A_j \neq \varnothing$ for every $j \neq i$. Then $C=K \cup \{y\}$ is a chain such that $C \subseteq \down x$ and $C \cap A_i \neq \varnothing$ for every $i$. To prove the other implication, suppose there is a chain $C$ in $X$ such that $C \subseteq \down x$ and $C \cap A_i \neq \varnothing$ for every~$i$. Choose an element $a_i \in C \cap A_i$ for every $i$. Since $C$ is a chain, $K\coloneqq \{ a_1, \dots, a_n\}$ is a chain such that $K \cap A_i \neq \varnothing$ for every $i$. Let $a_j$ be the greatest element of $K$. 
Since $K \subseteq \down a_j$, the induction hypothesis yields that $a_j \in \upup\{A_1, \dots, \widehat{A_j}, \dots, A_n\}$. Thus, $a_j \in \upup\{A_1, \dots, \widehat{A_j}, \dots, A_n\} \cap A_j$. It follows that 
\[
x \in \up (\upup\{A_1, \dots, \widehat{A_j}, \dots, A_n\} \cap A_j) \subseteq \upup\{A_1,\dots,A_n\}.
\]

\eqref{lem:upup properties:item2}. We again argue by induction on $n$. When $n=0$, the set $\upup \varnothing = X$ is clearly clopen. We assume that the claim is true for $n-1$ with $n \ge 1$ and show that it is true for $n$. By the induction hypothesis, $\upup\{A_1, \dots, \widehat{A_i}, \dots, A_n\}$ is clopen for every $i$. Since each $A_i$ is clopen, $\upup\{A_1, \dots, \widehat{A_i}, \dots, A_n\} \cap A_i$ is clopen for every $i$. Thus, $ \up(\upup\{A_1, \dots, \widehat{A_i}, \dots, A_n\} \cap A_i)$ is clopen because it is the upset of a clopen subset and $X$ is a co-Esakia space. So,
\[
\upup\{A_1,\dots,A_n\} = \bigcup_{i=1}^n\up(\upup\{A_1, \dots, \widehat{A_i}, \dots, A_n\} \cap A_i)
\] 
is clopen since it is a finite union of clopens.
\end{proof}

\begin{proposition}\label{prop:upset of clopen}
Let $V, W_1, \dots, W_n$ be subsets of a Priestley space $X$ such that $W_1, \dots, W_n \subseteq V$. Then  
\[
\upc (\Box V \cap \Diamond W_1 \cap \dots \cap \Diamond W_n)
=
\bigcup_{I \subseteq \{1, \dots, n \}} \Big[ \Box (V \cap \upup\{ W_i \mid i \in I\}) \cap \bigcap\{ \Diamond W_j \mid j \in I^c\} \Big],
\]
where in the right-hand side $I$ ranges over all subsets of $\{1, \dots, n\}$ and $I^c \coloneqq \{1, \dots, n\} \setminus I$.
\end{proposition}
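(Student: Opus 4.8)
The plan is to prove the two inclusions separately, in each case passing through the description of $\lec$ from \cref{lem:equivalent def lec} (so that a relation $C' \lec C$ becomes $C = \up x \cap C'$ with $x$ the least element of $C$, which exists by \cref{lem:CC in Pries min and max}) and through \cref{lem:upup properties:item1}, which turns membership in $\upup\{\dots\}$ into the existence of a chain contained in a fixed down-set and meeting each of the prescribed sets. The subset $I$ on the right-hand side will serve as a bookkeeping device: for a witnessing chain it will record exactly which of the $W_i$ that chain still meets after being ``shrunk'' by the operation $C \mapsto \up x \cap C$.

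For the inclusion $\subseteq$, I would take $C$ in the left-hand side, fix $C' \in \Box V \cap \Diamond W_1 \cap \dots \cap \Diamond W_n$ with $C' \lec C$, and write $C = \up x \cap C'$ with $x$ the least element of $C$. I would then set $I \coloneqq \{\, i \mid W_i \cap C = \varnothing \,\}$. For $j \in I^c$ one obtains $C \in \Diamond W_j$ at once, and since $C \subseteq C' \subseteq V$ it remains only to prove $C \subseteq \upup\{W_i \mid i \in I\}$. For each $i \in I$ I would choose $z_i \in W_i \cap C'$; since $z_i \notin C = \up x \cap C'$ and $C'$ is a chain, necessarily $z_i < x$, so $\{z_i \mid i \in I\}$ is a chain contained in $\down x \subseteq \down y$ for every $y \in C$ and meeting each $W_i$ with $i \in I$. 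Then \cref{lem:upup properties:item1} gives $y \in \upup\{W_i \mid i \in I\}$ for every $y \in C$, so $C$ lies in the $I$-th term of the union.

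For the inclusion $\supseteq$, I would fix $I \subseteq \{1, \dots, n\}$ and $C \in \Box(V \cap \upup\{W_i \mid i \in I\}) \cap \bigcap_{j \in I^c} \Diamond W_j$, and let $x$ be the least element of $C$. From $x \in \upup\{W_i \mid i \in I\}$ and \cref{lem:upup properties:item1} I get a chain contained in $\down x$ meeting each $W_i$ with $i \in I$; picking one point $a_i$ from each such intersection produces a finite chain $K \coloneqq \{a_i \mid i \in I\} \subseteq \down x$, and $K \subseteq V$ precisely because $W_i \subseteq V$. Then $C' \coloneqq C \cup K$ is a nonempty closed chain (a finite subset of a Stone space is closed, and $K$ sits below the least element of $C$), it belongs to $\Box V \cap \Diamond W_1 \cap \dots \cap \Diamond W_n$ by construction, and $\up x \cap C' = C$ since every element of $K$ is $\le x$; hence $C' \lec C$ by \cref{lem:equivalent def lec}, which gives $C \in \upc(\Box V \cap \Diamond W_1 \cap \dots \cap \Diamond W_n)$. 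The degenerate cases $n = 0$ and $I \in \{\varnothing, \{1, \dots, n\}\}$ are covered by the same argument, using $\upup \varnothing = X$.

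The only genuine subtlety I expect is the closedness bookkeeping in the $\supseteq$ direction: rather than adjoining the whole chain furnished by \cref{lem:upup properties:item1}, which need not be closed, one must extract a finite subchain meeting the finitely many relevant $W_i$, which is then automatically closed; and one must verify that enlarging $C$ to $C'$ destroys neither membership in $\Box V$ — this is exactly where the hypothesis $W_i \subseteq V$ is used, and it is easy to overlook that it is needed here — nor the relation $C' \lec C$, which survives because everything adjoined sits at or below the least element of $C$. In the $\subseteq$ direction the corresponding delicate point is the strict inequality $z_i < x$, which is where the defining property of $I$, namely $W_i \cap C = \varnothing$ for $i \in I$, enters.
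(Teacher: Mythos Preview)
Your proposal is correct and follows essentially the same approach as the paper's proof: the same choice of $I$, the same use of \cref{lem:upup properties:item1} in both directions, and the same finite-chain extraction to ensure closedness in the $\supseteq$ direction. The only cosmetic difference is that in the $\subseteq$ direction the paper uses the full set $C' \setminus C$ as the witnessing chain rather than your finite selection $\{z_i \mid i \in I\}$, but both work equally well.
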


\begin{proof}
To show the left-to-right inclusion, assume that $C \in \upc (\Box V \cap \Diamond W_1 \cap \dots \cap \Diamond W_n)$. Then there is $K \in \CC(X)$ such that $K \lec C$, $K \in \Box V$, and $K \in \Diamond W_i$ for every $i=1, \dots, n$. Let $I \subseteq \{1, \dots, n\}$ be such that $i \in I$ iff $C \notin \Diamond W_i$. Then $C \in \Diamond W_j$ for every $j \in I^c$. Thus, $C \in \bigcap\{ \Diamond W_j \mid j \in I^c\}$. It remains to show that $C \in \Box (V \cap \upup\{ W_i \mid i \in I\})$.
By \cref{lem:Box and Diamond properties:item1}, $\Box V$ is an upset, and hence $C \in \Box V$ because $K \lec C$ and $K \in \Box V$. Thus, $C \subseteq V$.
Since $K \in \Diamond W_i$ for each $i= 1, \dots, n$ and $C \notin \Diamond W_i$ for every $i \in I$, it follows that $(K \setminus C) \cap W_i \ne \varnothing$ for every $i \in I$. We have that $K \lec C$ implies $K \setminus C \subseteq \down x$ for every $x \in C$. So, for every $x \in C$ the set $K \setminus C$ is a chain in $X$ such that $K \setminus C \subseteq \down x$ and $(K \setminus C) \cap W_i \ne \varnothing$ for every $i \in I$. Then \cref{lem:upup properties:item1} implies that $C \subseteq \upup\{ W_i \mid i \in I\}$. Therefore, $C \subseteq V \cap \upup\{ W_i \mid i \in I\}$, and hence $C \in \Box (V \cap \upup\{ W_i \mid i \in I\})$.
We have then found $I \subseteq \{1, \dots, n\}$ such that $C \in \Box (V \cap \upup\{ W_i \mid i \in I\}) \cap \bigcap\{ \Diamond W_j \mid j \in I^c\}$. It follows that $C$ belongs to the right-hand side of the claimed equality.

To show the right-to-left inclusion, let $C \in \CC(X)$ be such that 
\[
C \in \Box (V \cap \upup\{ W_i \mid i \in I\}) \cap \bigcap\{ \Diamond W_j \mid j \in I^c\}
\]
for some $I \subseteq \{1, \dots, n\}$. Then $C \in \Box (V \cap \upup\{ W_i \mid i \in I\})$ and $C \in \Diamond W_j$ for every $j \in I^c$.
So, $C \subseteq V \cap \upup\{ W_i \mid i \in I\}$ and $C \cap W_j \ne \varnothing$ for every $j \in I^c$.
Since $C \subseteq \upup\{ W_i \mid i \in I\}$, we have in particular that the least element $m(C)$ of $C$ is in $\upup\{ W_i \mid i \in I\}$. Thus, \cref{lem:upup properties:item1} yields a chain $K$ of $X$ such that $K \subseteq \down m(C)$ and $K \cap W_i \ne \varnothing$ for every $i \in I$. 
By selecting one element from $K \cap W_i$ for each $i$, we can assume that $K$ is finite, $K \subseteq \bigcup \{ W_i \mid i \in I\}$, and $K \cap W_i \ne \varnothing$ for every $i \in I$. Then $K$ is a closed chain because it is a finite chain in $X$. Since $K \subseteq \down m(C)$, we obtain that $K \cup C \in \CC(X)$ and $K \cup C \lec C$. By hypothesis, $\bigcup \{ W_i \mid i \in I\} \subseteq V$. Thus, $K \cup C \subseteq V$, and hence $K \cup C \in \Box V$. Since $K \cap W_i \ne \varnothing$ for every $i \in I$ and $C \cap W_j \ne \varnothing$ for every $j \in I^c$, we have that $K \cup C \in \Diamond W_i$ for every $i=1, \dots, n$. Consequently, $K \cup C \in \Box V \cap \Diamond W_1 \cap \dots \cap \Diamond W_n$, and so $C \in \upc (\Box V \cap \Diamond W_1 \cap \dots \cap \Diamond W_n)$.
\end{proof}

Before we can prove the main results of this section, we need to draw a connection between the topology of a Priestley space $X$ and the topology of a subspace of $\CC(X)$. Let $\max \CC(X)$ denote the subset of $\CC(X)$ consisting of the elements of $\CC(X)$ that are maximal with respect to the order $\lec$. It follows from the definition of $\lec$ that such elements are exactly the chains consisting of a single element of $X$. It is then clear that the map $\varphi \colon X \to \max \CC(X)$ sending $x$ to $\{x\}$ is a bijection. We consider $\max \CC(X)$ equipped with the subspace topology induced by the topology on $\CC(X)$.

\begin{lemma}\label{lem:max CC(X)}
Let $X$ be a Priestley space. Then $\varphi \colon X \to \max \CC(X)$ is a homeomorphism.
\end{lemma}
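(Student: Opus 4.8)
The plan is to show that $\varphi$ is a continuous closed bijection between compact Hausdorff spaces, which suffices since $X$ is a Stone space (hence compact Hausdorff) and $\max\CC(X)$, being a subspace of the Stone space $\CC(X)$, is Hausdorff. We already noted that $\varphi$ is a bijection onto $\max\CC(X)$, so the work is entirely topological.

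First I would establish continuity of $\varphi$. By \cref{lem:facts CC(X):item4}, the sets $\Box V$ and $\Diamond V$ with $V$ clopen in $X$ form a subbasis for $\CC(X)$, so it is enough to compute $\varphi^{-1}[\Box V \cap \max\CC(X)]$ and $\varphi^{-1}[\Diamond V \cap \max\CC(X)]$. Since $\varphi(x) = \{x\}$, we have $\{x\} \subseteq V$ iff $x \in V$ iff $\{x\} \cap V \ne \varnothing$; hence both preimages equal $V$, which is clopen in $X$. So $\varphi$ is continuous.

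For closedness, the cleanest route is to invoke compactness directly: $X$ is compact, $\varphi$ is continuous, and $\max\CC(X)$ is Hausdorff, so $\varphi$ is automatically a closed map, and a continuous closed bijection is a homeomorphism. Alternatively, if one prefers to argue that $\max\CC(X)$ is compact in its own right (so that the same conclusion follows), one can observe that $\varphi[X]$ is the continuous image of the compact space $X$, hence compact, hence closed in the Hausdorff space $\CC(X)$; this also re-proves that $\max\CC(X)$ is a Stone space. Either way, $\varphi$ is a homeomorphism onto $\max\CC(X)$.

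I do not expect a serious obstacle here: the only mild subtlety is making sure the subspace topology on $\max\CC(X)$ is handled correctly, i.e.\ that subbasic opens of $\CC(X)$ intersected with $\max\CC(X)$ generate its topology, which is automatic for subspaces. The computation $\varphi^{-1}[\Box V] = \varphi^{-1}[\Diamond V] = V$ is the crux and it is immediate from the definitions of $\Box$ and $\Diamond$ on singletons.
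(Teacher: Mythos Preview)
Your proposal is correct and follows essentially the same approach as the paper: both hinge on the observation that for a singleton $\{x\}$ one has $\{x\}\in\Box V$ iff $x\in V$ iff $\{x\}\in\Diamond V$, giving $\varphi^{-1}[\Box V\cap\max\CC(X)]=\varphi^{-1}[\Diamond V\cap\max\CC(X)]=V$. The only minor difference is in the wrap-up: the paper verifies openness of $\varphi$ directly by the dual computation $\varphi[V]=\max\CC(X)\cap\Box V$, whereas you appeal to the compact-to-Hausdorff lemma to get closedness; both are standard and equally short.
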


\begin{proof}
Since we know that $\varphi$ is a bijection, it is sufficient to show that $\varphi$ is open and continuous. Observe that if $V$ is clopen in $X$, then $\varphi[V]=\{ \{x\} \in \max \CC(X) \mid x \in V\}$. Thus, $\varphi[V] = \max \CC(X) \cap \Box V = \max \CC(X) \cap \Diamond V$. Since $X$ is a Stone space, it follows that $\varphi$ is an open map. 
The definition of the topology on $\CC(X)$ implies that the subsets of the form $\max \CC(X) \cap \Box V$ and $\max \CC(X) \cap \Diamond V$, with $V$ clopen in $X$, form a subbasis for the topology on $\max \CC(X)$. Then $\varphi$ is a continuous map because $\varphi^{-1}[\max \CC(X) \cap \Box V] =\varphi^{-1}[\max \CC(X) \cap \Diamond V]=V$ for every clopen subset $V$ of $X$.
\end{proof}

We are now ready to obtain the necessary and sufficient condition for $\CC(X)$ to be a bi-Esakia space. 

\begin{theorem}\label{thm:CCX bi-Esakia}
Let $X$ be a Priestley space. Then $\CC(X)$ is a bi-Esakia space iff $X$ is a co-Esakia space.
\end{theorem}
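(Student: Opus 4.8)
The plan is to observe first that, by \cref{thm:CC in ERS}, the space $\CC(X)$ is always an Esakia space, so ``$\CC(X)$ is a bi-Esakia space'' is equivalent to ``$\CC(X)$ is a co-Esakia space'', i.e.\ to the statement that $\upc \mathcal{V}$ is clopen for every clopen $\mathcal{V}$ of $\CC(X)$. Both implications therefore amount to transferring the co-Esakia property between $X$ and $\CC(X)$, and the computational heart of the matter is the description of the $\lec$-upset of a basic clopen provided by \cref{prop:upset of clopen}, together with the fact that $\upup$ of clopens is clopen in a co-Esakia space (\cref{lem:upup properties:item2}).

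For the implication ``$X$ co-Esakia $\Rightarrow$ $\CC(X)$ co-Esakia'': by \cref{lem:facts CC(X):item5} every clopen of $\CC(X)$ is a finite union of sets of the form $\Box V \cap \Diamond W_1 \cap \dots \cap \Diamond W_n$ with $V,W_1,\dots,W_n$ clopen in $X$ and $W_1,\dots,W_n \subseteq V$. Since $\upc$ commutes with finite unions, it suffices to show that each such set has clopen $\lec$-upset. By \cref{prop:upset of clopen}, $\upc(\Box V \cap \Diamond W_1 \cap \dots \cap \Diamond W_n)$ is a finite union, over subsets $I \subseteq \{1,\dots,n\}$, of the sets $\Box(V \cap \upup\{W_i \mid i \in I\}) \cap \bigcap\{\Diamond W_j \mid j \in I^c\}$, where $I^c = \{1,\dots,n\}\setminus I$. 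Now $\upup\{W_i \mid i \in I\}$ is clopen in $X$ by \cref{lem:upup properties:item2}, hence so is $V \cap \upup\{W_i \mid i \in I\}$; therefore $\Box(V \cap \upup\{W_i \mid i \in I\})$ and each $\Diamond W_j$ are clopen in $\CC(X)$ by \cref{lem:facts CC(X):item3}. A finite union of finite intersections of clopens is clopen, which settles this direction.

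For the converse, assume $\CC(X)$ is a bi-Esakia space, hence co-Esakia, and let $V$ be clopen in $X$; we must show $\up V$ is clopen in $X$. Specializing \cref{prop:upset of clopen} to $n=1$, with the ``$\Box$''-part equal to $\Box X = \CC(X)$, gives
\[
\upc \Diamond V = \Diamond V \cup \Box(\up V)
\]
(this can also be checked directly using \cref{lem:equivalent def lec}). Since $\Diamond V$ is clopen in $\CC(X)$ by \cref{lem:facts CC(X):item3} and $\CC(X)$ is co-Esakia, $\Diamond V \cup \Box(\up V)$ is clopen. Intersecting with $\max\CC(X)$ and using that $\{x\} \in \Diamond V$ iff $x \in V$ and $\{x\} \in \Box(\up V)$ iff $x \in \up V$, we obtain $\max\CC(X) \cap (\Diamond V \cup \Box(\up V)) = \varphi[\up V]$, where $\varphi \colon X \to \max\CC(X)$ is the homeomorphism of \cref{lem:max CC(X)}. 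Hence $\varphi[\up V]$ is clopen in the subspace $\max\CC(X)$, and so $\up V = \varphi^{-1}[\varphi[\up V]]$ is clopen in $X$, as desired.

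The main obstacle is not in this final argument, which is short once the supporting results are in place, but in the preparatory work already done: introducing the operation $\upup$, showing it preserves clopenness in co-Esakia spaces (\cref{lem:upup properties:item2}), and establishing the formula for $\upc$ of a basic clopen (\cref{prop:upset of clopen}). Given those, the only genuine care needed is in correctly extracting the $n=1$ instance of \cref{prop:upset of clopen} and in the bookkeeping that identifies $\max\CC(X) \cap (\Diamond V \cup \Box(\up V))$ with $\varphi[\up V]$; note in particular that it is irrelevant whether $\max\CC(X)$ is closed in $\CC(X)$, since we only use clopenness relative to the subspace.
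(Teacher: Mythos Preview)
Your proof is correct and follows essentially the same approach as the paper: both directions hinge on \cref{prop:upset of clopen} and \cref{lem:upup properties:item2} for the sufficiency, and on passing through $\max\CC(X)$ via \cref{lem:max CC(X)} for the necessity. The only cosmetic difference is that the paper establishes $\max\CC(X)\cap\upc\Diamond V=\varphi[\up V]$ by a short direct argument rather than by first computing $\upc\Diamond V=\Diamond V\cup\Box(\up V)$ from the $n=1$ case of \cref{prop:upset of clopen}.
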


\begin{proof}
We first show the left-to-right implication. Assume that $\CC(X)$ is a bi-Esakia space and let $V$ be a clopen subset of $X$. We prove that
\begin{equation}\label{eq:max cap diamond V}
\max \CC(X) \cap \upc \Diamond V = \{\{x\} \in \CC(X) \mid x \in \up V\}. 
\end{equation}
Recall that the elements of $\max \CC(X)$ are exactly the chains consisting of a single element of $X$. 
If $x \in X$, then $\{x\} \in \upc \Diamond V$ iff there is $C \in \CC(X)$ such that $C \cap V \neq \varnothing$ and $C \lec \{x\}$. Since $C \lec \{x\}$ iff $x$ is the greatest element of $C$, we have that $\{x\} \in \upc \Diamond V$ implies that $x \in \up V$. Conversely, suppose $x \in \up V$. Then there is $y \in V$ such that $y \le x$, and hence $\{y,x\} \in \Diamond V$ and $\{y,x\} \lec \{x\}$. Thus, $\{x\} \in \upc \Diamond V$. This establishes \cref{eq:max cap diamond V}. Since $\CC(X)$ is a bi-Esakia space and $V$ is clopen, $\upc \Diamond V$ is clopen. It follows from \cref{eq:max cap diamond V} that $\{\{x\} \in \CC(X) \mid x \in \up V\}$ is clopen in $\max \CC(X)$. Then \cref{lem:max CC(X)} yields that $\up V = \varphi^{-1}[\{\{x\} \in \CC(X) \mid x \in \up V\}]$ is clopen. Therefore, $\up V$ is clopen in $X$ for every clopen subset $V$ of $X$, and so $X$ is a co-Esakia space.

To prove the converse implication, assume that $X$ is a co-Esakia space. Since $\CC(X)$ is an Esakia space by \cref{thm:CC in ERS}, it remains to show that the upset of every clopen subset of $\CC(X)$ is clopen. By \cref{lem:facts CC(X):item5}, every clopen of $\CC(X)$ is a finite union of clopens of the form $\Box V \cap \Diamond W_1 \cap \dots \cap \Diamond W_n$ for some clopen subsets $V, W_1, \dots, W_n$ of $X$ such that $W_1, \dots, W_n \subseteq V$. It is then sufficient to show that if $V, W_1, \dots, W_n$ are clopens in $X$ with $W_1, \dots, W_n \subseteq V$, then $\upc(\Box V \cap \Diamond W_1 \cap \dots \cap \Diamond W_n)$ is clopen. By \cref{prop:upset of clopen}, we have to show that 
\[
\bigcup_{I \subseteq \{1, \dots, n \}} \Big[ \Box (V \cap \upup\{ W_i \mid i \in I\}) \cap \bigcap\{ \Diamond W_j \mid j \in I^c\} \Big]
\] 
is clopen in $\CC(X)$. Since $W_1,\dots,W_n$ are clopens, \cref{lem:upup properties:item2} implies that $\upup\{ W_i \mid i \in I\}$ is clopen for every $I \subseteq \{1, \dots, n \}$. So, $\Box (V \cap \upup\{ W_i \mid i \in I\})$ is clopen in $\CC(X)$ because $V$ is clopen in $X$. Moreover, $\Diamond W_i$ is clopen in $\CC(X)$ for every $i =1, \dots, n$ because each $W_i$ is clopen in $X$. This concludes the proof because finite unions and intersections of clopens are clopen.
\end{proof}

As an immediate consequence of \cref{thm:free Godel over L,thm:CCX bi-Esakia,prop:Esakia duality for cobiHeyting}, we obtain a necessary and sufficient condition for a G\"odel algebra free over a distributive lattice to be a bi-Heyting algebra.

\begin{theorem}\label{thm:free over L biHey iff L coHey}
Let $L$ be a distributive lattice and $G$ the G\"odel algebra free over $L$. Then $G$ is a bi-Heyting algebra iff $L$ is a co-Heyting algebra.
\end{theorem}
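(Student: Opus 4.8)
The plan is to deduce \cref{thm:free over L biHey iff L coHey} directly by chaining together the three results cited just before the statement. Let $X = L_*$ be the Priestley space dual to $L$. By \cref{thm:free Godel over L}, the G\"odel algebra $G$ free over $L$ is isomorphic to $\CC(X)^*$; in particular $G$ is the Heyting algebra dual (via Esakia duality) to the Esakia root system $\CC(X)$. Now $G = \CC(X)^*$ is a bi-Heyting algebra precisely when its order dual is also a Heyting algebra, which by \cref{prop:Esakia duality for cobiHeyting}(2), applied to the distributive lattice $\CC(X)^*$ whose dual Priestley space is $\CC(X)$ (using the isomorphism $\varepsilon_{\CC(X)} \colon \CC(X) \to (\CC(X)^*)_*$), is equivalent to $\CC(X)$ being a bi-Esakia space.

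Next I would invoke \cref{thm:CCX bi-Esakia}, which states that $\CC(X)$ is a bi-Esakia space iff $X$ is a co-Esakia space. Finally, by \cref{prop:Esakia duality for cobiHeyting}(1), $X = L_*$ is a co-Esakia space iff $L$ is a co-Heyting algebra. Stringing these equivalences together yields that $G$ is a bi-Heyting algebra iff $L$ is a co-Heyting algebra, as desired.

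There is essentially no obstacle here: all the substantive work has already been carried out in \cref{thm:CCX bi-Esakia} (the nontrivial direction using \cref{prop:upset of clopen} and the $\upup$ operation) and in establishing \cref{thm:free Godel over L}. The only mild care needed is bookkeeping about which lattice and which Priestley space one feeds into each cited proposition, and noting that a Heyting algebra is a bi-Heyting algebra exactly when the underlying distributive lattice is a co-Heyting algebra — so that \cref{prop:Esakia duality for cobiHeyting}(2) applies verbatim to $\CC(X)^*$. Thus the proof is just a short assembly of \cref{thm:free Godel over L}, \cref{thm:CCX bi-Esakia}, and \cref{prop:Esakia duality for cobiHeyting}.
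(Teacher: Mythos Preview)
Your proposal is correct and matches the paper's approach exactly: the paper states the theorem as an immediate consequence of \cref{thm:free Godel over L,thm:CCX bi-Esakia,prop:Esakia duality for cobiHeyting} without giving any further argument, and your chain of equivalences is precisely how those three results combine.
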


\begin{remark}\plabel{remark:CC(X) co and bi}
\hfill\begin{enumerate}
\item\label[remark:CC(X) co and bi]{remark:CC(X) co and bi:item1}
Co-Heyting algebras can be equivalently defined as distributive lattices equipped with a binary operation of co-implication $\leftarrow$ satisfying the property
\[
a \leftarrow b \le c \iff a \le b \vee c.
\]
Co-Heyting algebras form a variety in the signature $(\wedge, \vee, \leftarrow, 0, 1)$ and a lattice homomorphism between co-Heyting algebras is called a co-Heyting algebra homomorphism if it also preserves the co-implication. Co-Heyting algebra homomorphisms correspond to continuous map between co-Esakia spaces satisfying $f[\down x] = \down f(x)$ for every $x$ in the domain.

Let $X$ be a Priestley space. It is straightforward to verify that the continuous map $m \colon \CC(X) \to X$, that sends each $C \in \CC(X)$ its least element $m(C)$, satisfies $m[\downc C] = \down m(C)$. If $X$ is a co-Esakia space, then \cref{thm:CCX bi-Esakia} implies that $\CC(X)$ is a bi-Esakia space, and hence a co-Esakia space.  Therefore, if $L$ is a co-Heyting algebra and $G$ the G\"odel algebra free over the distributive lattice $L$ via 
$e \colon L \to G$, then $e$ is a co-Heyting algebra homomorphism.

\item \label[remark:CC(X) co and bi]{remark:CC(X) co and bi:item2}
Let $X$ be a Priestley space and $\lec'$ the order on $\CC(X)$ given by $C_1 \lec' C_2$ iff $C_1 \subseteq C_2$ and $C_1$ is a downset in $C_2$.  Then $(\CC(X), \lec')$ is the order dual of $(\CC(X^\partial), \lec)$, where $X^\partial$ is the order dual of $X$. Thus, \cref{thm:CCX bi-Esakia} implies that $(\CC(X), \lec')$ is a bi-Esakia space iff $X$ is an Esakia space. Moreover, what we observed in \eqref{remark:CC(X) co and bi:item1} yields that the map $M \colon (\CC(X), \lec') \to (X, \le)$ sending each $C \in \CC(X)$ to its greatest element $M(C) \in X$ is a continuous p-morphism. It follows that every Esakia space $X$ is the image under a continuous p-morphism of the bi-Esakia space $(\CC(X), \lec')$, which is a forest; i.e., a disjoint union of trees.
This is a natural generalization of the standard unraveling construction that ``unfolds'' a rooted poset into a tree (see, e.g., \cite[Thm.~2.19]{CZ97}). 
\end{enumerate}
\end{remark}

Recall that if $S$ is a set, then $\two^S$ denotes the Priestley space equipped with the product topology and the pointwise order induced by the $2$-element chain $\two$ with the discrete topology. Thus, the topology on $\two^S$ is generated by the subbasis $\{ U_s, D_s \mid s \in S\}$, where
\[
U_s \coloneqq \{ (a_i)_{i \in S} \mid a_s=1\} \qquad \text{and} \qquad D_s \coloneqq \{ (a_i)_{i \in S} \mid a_s=0\}.
\]
The subsets $U_s$ and $D_s$ are clopen upsets and clopen downsets of $\two^S$ for every $s \in S$, respectively. The following fact is well known. Due to the lack of a reference, we provide its proof.

\begin{lemma}\label{lem:free dist bi-Heyting}
Free distributive lattices are bi-Heyting algebras.
\end{lemma}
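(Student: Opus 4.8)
The plan is to use Priestley duality together with \cref{prop:Esakia duality for cobiHeyting} to reduce the statement to a purely topological claim about the Priestley dual. By \cref{prop:free dist latt}, the distributive lattice free over a set $S$ is $(\two^S)^*$, whose Priestley dual is $\two^S$. By \cref{prop:Esakia duality for cobiHeyting}, it therefore suffices to prove that $\two^S$ is a bi-Esakia space, i.e.\ that $\down V$ is clopen and $\up V$ is clopen for every clopen subset $V$ of $\two^S$.

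First I would reduce to a manageable shape for $V$. Since the topology on $\two^S$ is generated by the subbasis $\{U_s, D_s \mid s \in S\}$ and $\two^S$ is compact, every clopen subset of $\two^S$ is a finite union of finite intersections of sets of the form $U_s$ and $D_s$. As $\down(-)$ and $\up(-)$ commute with finite unions, it is enough to handle a single ``basic box'' $B = U_{s_1} \cap \dots \cap U_{s_k} \cap D_{t_1} \cap \dots \cap D_{t_\ell}$ with the $s_i$ and $t_j$ distinct elements of $S$. The key observation is that such a box depends only on finitely many coordinates, so its downset and upset can be computed coordinatewise.

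The main computation is then the following. I claim $\down B = D_{s_1} \cap \dots \cap D_{s_k} \cap \dots$ — more precisely, a point $(a_i)_{i\in S}$ lies in $\down B$ iff there is a point of $B$ above it; since we may freely raise the coordinates outside $\{s_1,\dots,s_k\}$ and must be able to raise the $s_i$-coordinates from $a_{s_i}$ up to $1$ while keeping the $t_j$-coordinates at $0$, this forces exactly $a_{t_1} = \dots = a_{t_\ell} = 0$ and imposes no other constraint (the coordinates $a_{s_i}$ can be anything, since $0 \le 1$). Hence $\down B = D_{t_1} \cap \dots \cap D_{t_\ell}$, which is clopen. Dually, $\up B = U_{s_1} \cap \dots \cap U_{s_k}$, which is also clopen. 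Therefore $\two^S$ is both an Esakia and a co-Esakia space, i.e.\ a bi-Esakia space, and so $(\two^S)^*$ is a bi-Heyting algebra by \cref{prop:Esakia duality for cobiHeyting}.

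I expect the only real subtlety to be the bookkeeping in the coordinatewise description of $\down B$ and $\up B$: one must be careful that a box is determined by the constraint ``coordinate $=1$'' on a finite set and ``coordinate $=0$'' on a disjoint finite set (with all other coordinates free), and that lowering/raising in the product order is componentwise, so that the downset of a box is again a box of the same type (with the $U$-constraints dropped) and the upset is a box with the $D$-constraints dropped. Once this is set up, everything else — reducing an arbitrary clopen to a finite union of boxes via compactness, and commuting $\down$ and $\up$ with finite unions — is routine.
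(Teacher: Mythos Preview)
Your proposal is correct and follows essentially the same route as the paper: reduce via \cref{prop:free dist latt} and \cref{prop:Esakia duality for cobiHeyting} to showing that $\two^S$ is bi-Esakia, then handle a basic clopen $U_{s_1}\cap\dots\cap U_{s_k}\cap D_{t_1}\cap\dots\cap D_{t_\ell}$ (with the $s_i$ disjoint from the $t_j$, since otherwise the box is empty) by computing $\down B = D_{t_1}\cap\dots\cap D_{t_\ell}$ and $\up B = U_{s_1}\cap\dots\cap U_{s_k}$. Apart from a momentary slip in your exposition (you first wrote $\down B = D_{s_1}\cap\dots$ before correcting yourself), this matches the paper's argument exactly.
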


\begin{proof}
Let $S$ be a set. By \cref{prop:free dist latt}, the free distributive lattice over $S$ is dual to the Priestley space $\two^S$. By duality, it is then sufficient to show that $\two^S$ is a bi-Esakia space. Since $\{ U_s, D_s \mid s \in S\}$ is a subbasis consisting of clopen subsets, to prove that $\two^S$ is a bi-Esakia space, it is sufficient to show that $\down (U_{s_1} \cap \dots \cap U_{s_n} \cap D_{t_1} \cap \dots \cap D_{t_m})$ and $\up (U_{s_1} \cap \dots \cap U_{s_n}  \cap D_{t_1} \cap \dots \cap D_{t_m})$ are clopen for every $s_1, \dots, s_n,t_1,\dots,t_m \in S$. Since $U_s \cap D_s= \varnothing$ for every $s \in S$, we can assume that $s_i \ne t_j$ for every $i \le n$ and $j \le m$.
It is straightforward to check that
\begin{align*}
\down (U_{s_1} \cap \dots \cap U_{s_n} \cap D_{t_1} \cap \dots \cap D_{t_m}) & = D_{t_1} \cap \dots \cap D_{t_m}, \text{ and}\\
\up (U_{s_1} \cap \dots \cap U_{s_n} \cap D_{t_1} \cap \dots \cap D_{t_m}) & = U_{s_1} \cap \dots \cap U_{s_n} .
\end{align*}
Thus, $\up V$ and $\down V$ are clopen for every clopen subset $V$ of $\two^S$. Therefore, $\two^S$ is a bi-Esakia space.
\end{proof}

\begin{theorem}
Free G\"odel algebras are bi-Heyting algebras.
\end{theorem}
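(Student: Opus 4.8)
The strategy is to reduce the claim to the two results already in place: \cref{thm:free over L biHey iff L coHey}, which characterizes when a G\"odel algebra free over a distributive lattice is bi-Heyting, and \cref{lem:free dist bi-Heyting}, which says free distributive lattices are bi-Heyting. The only conceptual ingredient linking them is the observation (recorded in the excerpt just before \cref{thm:free Godel over S}) that the G\"odel algebra free over a set $S$ coincides with the G\"odel algebra free over the distributive lattice that is free over $S$.

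First I would fix a set $S$ and let $L$ be the free distributive lattice over $S$; concretely, by \cref{prop:free dist latt} we may take $L=(\two^S)^*$. Then the G\"odel algebra $G$ free over $S$ is (isomorphic to) the G\"odel algebra free over the distributive lattice $L$: this follows by composing the universal property of $L$ with that of the free G\"odel algebra over $L$, exactly as used in the proof of \cref{thm:free Godel over S}. Next, by \cref{lem:free dist bi-Heyting}, $L$ is a bi-Heyting algebra; in particular $L$ is a co-Heyting algebra. Finally, \cref{thm:free over L biHey iff L coHey} applies to $L$ and yields that $G$, being free over the co-Heyting algebra $L$, is a bi-Heyting algebra.

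There is essentially no obstacle here: all the work has been done in \cref{thm:CCX bi-Esakia,prop:upset of clopen,lem:upup properties}, which underlie \cref{thm:free over L biHey iff L coHey}, and in the dual computation of \cref{lem:free dist bi-Heyting} showing $\two^S$ is a bi-Esakia space. The one point to state carefully is the identification of the free G\"odel algebra over $S$ with the free G\"odel algebra over $(\two^S)^*$; once that is granted, the result is a two-line deduction. If one wishes to be fully explicit, one can instead invoke \cref{thm:free Godel over S} directly, noting that $\CC(\two^S)$ is a bi-Esakia space by \cref{thm:CCX bi-Esakia} (since $\two^S$ is co-Esakia by the proof of \cref{lem:free dist bi-Heyting}), so that $\CC(\two^S)^*$ is a bi-Heyting algebra.
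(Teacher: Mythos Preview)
Your proposal is correct and follows essentially the same route as the paper's proof: identify the free G\"odel algebra over $S$ with the G\"odel algebra free over the free distributive lattice $L$ on $S$, use \cref{lem:free dist bi-Heyting} to conclude $L$ is co-Heyting, and apply \cref{thm:free over L biHey iff L coHey}. The alternative you sketch via \cref{thm:CCX bi-Esakia} and \cref{thm:free Godel over S} is also valid and amounts to unwinding the same dualities.
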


\begin{proof}
Let $G$ be the G\"odel algebra free over a set $S$. The $G$ is also free over the distributive lattice $L$ that is free over $S$. By \cref{lem:free dist bi-Heyting}, $L$ a bi-Heyting algebra, and hence a co-Heyting algebra. Therefore, \cref{thm:free over L biHey iff L coHey} yields that $G$ is a bi-Heyting algebra.
\end{proof}

We end the section by showing that an analogue of the previous theorem does not hold for $\GA_n$-algebras. We first need to prove the following technical fact.

\begin{lemma}\label{lem:clopen CCn contain chains size n}
If $S$ is an infinite set, then every nonempty clopen subset of $\CC_n(\two^S)$ contains a chain of $\two^S$ of size $n$.
\end{lemma}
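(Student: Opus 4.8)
The plan is to reduce the statement to a basic clopen neighbourhood and then to exploit the fact that, since $S$ is infinite, $\two^S$ has ``room to spare'': a clopen subset of $\two^S$ is determined by finitely many coordinates, so one can lengthen a chain using the remaining coordinates without affecting membership in that clopen. Throughout I would identify a point $(a_i)_{i\in S}$ of $\two^S$ with its support $\{i\in S\mid a_i=1\}$, so that the order on $\two^S$ is inclusion and a finite chain of $\two^S$ is a strictly increasing finite sequence of subsets of $S$. I would also recall the standard fact that every clopen subset of $\two^S$ depends only on finitely many coordinates (as $\two$ is finite discrete), and that by \cref{prop:depth of chains} the elements of $\CC_n(\two^S)$ are exactly the nonempty chains of $\two^S$ of size at most $n$; in particular a finite chain of $\two^S$ of size $n$ is automatically an element of $\CC_n(\two^S)$.

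First I would make the reduction. Let $\mathcal{A}$ be a nonempty clopen subset of $\CC_n(\two^S)$. It is in particular open, so by \cref{lem:facts CC(X):item5} (whose basis for $\CC(\two^S)$ restricts to a basis for the subspace $\CC_n(\two^S)$) there are clopen subsets $V,W_1,\dots,W_m$ of $\two^S$ with $W_1,\dots,W_m\subseteq V$ such that $\varnothing\ne \Box V\cap\Diamond W_1\cap\dots\cap\Diamond W_m\cap\CC_n(\two^S)\subseteq\mathcal{A}$. It therefore suffices to produce a chain of $\two^S$ of size $n$ lying in this basic clopen. Choose a finite set $F\subseteq S$ on which all of $V,W_1,\dots,W_m$ depend, so that membership of a point in $V$ (resp.\ in $W_i$) is determined by its restriction to $F$; write $\widehat V,\widehat W_i\subseteq\two^F$ for the corresponding traces, so that $A\in V$ iff $A\cap F\in\widehat V$, and similarly for each $W_i$. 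Pick some $C_0=\{A_1\subsetneq\dots\subsetneq A_k\}$ in the basic clopen; then $1\le k\le n$, every $A_j\cap F\in\widehat V$, and for each $i$ there is an index $j(i)$ with $A_{j(i)}\cap F\in\widehat W_i$.

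Now for the construction. Since $S$ is infinite and $F$ is finite, choose distinct elements $g_1,\dots,g_{n-1}\in S\setminus F$, and for $\ell=1,\dots,n$ set
\[
B_\ell \;:=\; (A_{\min(\ell,k)}\cap F)\;\cup\;\{g_1,\dots,g_{\ell-1}\}.
\]
I would then verify the following, all routine: the $B_\ell$ form a strictly increasing chain $B_1\subsetneq\dots\subsetneq B_n$ (strictness because $g_\ell\in B_{\ell+1}\setminus B_\ell$); each $B_\ell\cap F=A_{\min(\ell,k)}\cap F\in\widehat V$, so $C:=\{B_1,\dots,B_n\}\subseteq V$; and $B_{j(i)}\cap F=A_{j(i)}\cap F\in\widehat W_i$, so $C\cap W_i\ne\varnothing$ for every $i$. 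Being a finite chain, $C$ is closed in the Stone space $\two^S$, hence $C\in\CC_n(\two^S)$; thus $C$ belongs to the basic clopen and hence to $\mathcal{A}$, and $C$ is a chain of $\two^S$ of size $n$, as required.

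The only step with any real content is the construction in the last paragraph, and even there the idea is simple: the finitely many coordinates in $F$ control everything that matters (membership in $V$ and in each $W_i$), while the infinitely many coordinates outside $F$ are free to be used as padding to inflate a chain to exactly size $n$; the mild bookkeeping is just ensuring the inflated sequence is strictly increasing and still meets each $W_i$. Isolating the finite set $F$ — i.e.\ recalling that clopens of $\two^S$ are finitely determined — and this bookkeeping are the only places where care is needed, the reduction to a basic clopen via \cref{lem:facts CC(X):item5} being immediate.
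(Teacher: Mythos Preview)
Your proof is correct and follows essentially the same approach as the paper: reduce to a basic clopen $\Box V\cap\Diamond W_1\cap\dots\cap\Diamond W_m$, pass to a finite set $F$ of coordinates determining $V,W_1,\dots,W_m$, and use fresh coordinates outside $F$ to inflate a witness chain to size exactly $n$. The only cosmetic difference is that the paper first zeroes out all coordinates outside $F$ (possibly collapsing the chain to size $h\le k$) and then appends $n-h$ new elements above the top, whereas your $B_\ell=(A_{\min(\ell,k)}\cap F)\cup\{g_1,\dots,g_{\ell-1}\}$ does both in one pass; your version is a bit slicker since the fresh $g_\ell$'s guarantee strict increase regardless of collapses in the $F$-part.
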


\begin{proof}
Since $\{ U_s, D_s \mid s \in S\}$ is a subbasis for $\two^S$, every clopen of $\two^S$ is a finite union of finite intersections of subsets of the form $U_s$ and $D_s$. Thus, for each clopen $V$ of $\two^S$ there is a finite subset $S_V \subseteq S$ such that $V=\pi_{S_V}^{-1}[V']$ for some $V' \subseteq \two^{S_V}$, where $\pi_{S_V} \colon \two^S \to \two^{S_V}$ is the projection mapping each $(a_i)_{i \in S}$ to its subsequence $(a_i)_{i \in S_V}$. It follows that if $\overline{a}=(a_i)_{i \in S}$ and $\overline{b}=(b_i)_{i \in S}$ are elements of $\two^S$ such that $\pi_{S_V}(\overline{a})= \pi_{S_V}(\overline{b})$, then $\overline{a} \in V$ iff $\overline{b} \in V$.

Let $\mathcal{V}$ be a nonempty clopen of $\CC_n(\two^S)$. We show that $\mathcal{V}$ contains a chain of size $n$.  By \cref{lem:facts CC(X):item5}, we can assume that $\mathcal{V}=(\Box V \cap \Diamond W_1 \cap \dots \cap \Diamond W_m) \cap \CC_n(\two^S)$ for some $V,W_1,\dots,W_m$ clopens of $\two^S$. Let $S_\mathcal{V}\coloneqq S_V \cup S_{W_1} \cup \dots \cup S_{W_m} \subseteq S$. Thus, if $C_1,C_2 \in \CC_n(\two^S)$ are such that $\pi_{S_\mathcal{V}}[C_1]=\pi_{S_\mathcal{V}}[C_2]$, then what we observed above yields that $C_1 \in \mathcal{V}$ iff $C_2 \in \mathcal{V}$. 
Since $\mathcal{V}$ is nonempty, there is $C \in \mathcal{V}$. Let $\overline{a}^1, \dots, \overline{a}^k$ be the elements of $C$ with 
$\overline{a}^1 < \dots < \overline{a}^k$, and let
$\overline{a}^j=(a_i^j)_{i \in S}$ for every $j=1, \dots, k$. If $n=k$, then $C$ is a chain of size $n$ in $\mathcal{V}$ and we are done. Assume $n \ne k$. Then $k < n$. We define $\overline{b}^1, \dots, \overline{b}^k \in \two^S$ with $\overline{b}^j=(b_i^j)_{i \in S}$ for every $j$ by setting $b_i^j=a_i^j$ if $i \in S_{\mathcal{V}}$ and $b_i^j=0$ otherwise. 
Then $\overline{b}^1 \leq \dots \leq \overline{b}^k$ and 
$C'=\{\overline{b}^1 , \dots, \overline{b}^k\}$ is a chain of size smaller or equal to $k$. By the definition of the $\overline{b}^j$'s, we obtain that $\pi_{S_\mathcal{V}}[C]=\pi_{S_\mathcal{V}}[C']$. Let $h$ be the size of $C'$ and pick $s_1, \dots, s_{n-h}$ distinct elements of $S \setminus S_{\mathcal{V}}$, which exist because $S$ is infinite and $S_{\mathcal{V}}$ is finite.  Define $\overline{c}^1, \dots, \overline{c}^{n-h} \in \two^S$ with $\overline{c}^j=(c_i^j)_{i \in S}$ by setting
\[
c_i^j=
\begin{cases}
1 &\text{if } i \in \{s_1, \dots, s_j\},\\
0 &\text{if } i \in \{s_{j+1}, \dots, s_{n-h}\},\\
b_i^k &\text{otherwise.}
\end{cases}
\]
Then $\overline{b}^k < \overline{c}^1 < \dots < \overline{c}^{n-h}$ and $\pi_{S_\mathcal{V}}(\overline{c}^j)= \pi_{S_\mathcal{V}}(\overline{b}^k)$ for every $j$. Therefore, $C''\coloneqq C' \cup \{ \overline{c}^1, \dots, \overline{c}^{n-h}\}$
is a chain of $\two^S$ of size $n$ that belongs to $\mathcal{V}$ because $\pi_{S_\mathcal{V}}[C'']=\pi_{S_\mathcal{V}}[C']=\pi_{S_\mathcal{V}}[C]$ and $C \in \mathcal{V}$.
\end{proof}

\begin{theorem}
If $n \ge 2$, then every $\GA_n$-algebra free over an infinite set is not a bi-Heyting algebra. 
\end{theorem}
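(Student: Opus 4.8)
The plan is to pass to the dual side. By \cref{thm:free GAn over set}, the $\GA_n$-algebra free over an infinite set $S$ is isomorphic to $\CC_n(\two^S)^*$, and since a bi-Heyting algebra is in particular a co-Heyting algebra, it suffices to show that $\CC_n(\two^S)^*$ is not a co-Heyting algebra. Viewing $\CC_n(\two^S)^*$ as a distributive lattice and recalling that $\CC_n(\two^S)$ is an Esakia root system, hence a Priestley space with $\CC_n(\two^S)^*$ as its Priestley dual, \cref{prop:Esakia duality for cobiHeyting} reduces the statement to showing that $\CC_n(\two^S)$ is \emph{not} a co-Esakia space; that is, I must exhibit a clopen $\mathcal{A}\subseteq\CC_n(\two^S)$ whose $\lec$-upset $\upc\mathcal{A}$ is not clopen.

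To build such an $\mathcal{A}$, first I would fix a chain $K_0=\{w_1<\dots<w_n\}$ of $\two^S$ of size $n$ (available because $S$ is infinite: pick distinct $s_1,\dots,s_{n-1}\in S$ and let $w_j$ be the sequence with $1$'s exactly at $s_1,\dots,s_{j-1}$), and note that $K_0$ is a $\lec$-minimal point of $\CC_n(\two^S)$ since it cannot be enlarged within $\CC_n(\two^S)$. Using that $\two^S$ is a Stone space, I would choose pairwise disjoint clopen sets $W_1,\dots,W_n$ of $\two^S$ with $w_i\in W_i$, set $V\coloneqq W_1\cup\dots\cup W_n$, and take
\[
\mathcal{A}\coloneqq(\Box V\cap\Diamond W_1\cap\dots\cap\Diamond W_n)\cap\CC_n(\two^S),
\]
which is clopen by \cref{lem:facts CC(X):item3} and contains $K_0$. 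The two structural facts to extract are: (i) since $w_n=\max K_0$, \cref{lem:equivalent def lec} gives $K_0\lec\{w_n\}$, so $\{w_n\}\in\upc\mathcal{A}$; and (ii) any size-$n$ point $K_1\in\upc\mathcal{A}$ must in fact lie in $\mathcal{A}$, because any $K'\in\mathcal{A}\subseteq\CC_n(\two^S)$ with $K'\lec K_1$ satisfies $K_1\subseteq K'$ and $|K'|\le n=|K_1|$, hence $K'=K_1$.

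Then I would argue by contradiction. If $\upc\mathcal{A}$ were clopen, then $\mathcal{U}\coloneqq\upc\mathcal{A}\cap\Box W_n\cap\CC_n(\two^S)$ would be a clopen subset of $\CC_n(\two^S)$, and it is nonempty because $\{w_n\}\in\mathcal{U}$ by (i) and $w_n\in W_n$. By \cref{lem:clopen CCn contain chains size n} --- and this is precisely where the infiniteness of $S$ is used --- $\mathcal{U}$ contains a chain $K_1$ of $\two^S$ of size $n$. By (ii), $K_1\in\mathcal{A}$, so $K_1\in\Diamond W_1$, i.e. $K_1\cap W_1\ne\varnothing$; but $K_1\in\Box W_n$ forces $K_1\subseteq W_n$, contradicting $W_1\cap W_n=\varnothing$. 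Hence $\CC_n(\two^S)$ is not co-Esakia, and the free $\GA_n$-algebra over $S$ is not bi-Heyting. I expect the main obstacle to be isolating this configuration: one has to design $\mathcal{A}$ so that its $\Diamond$-constraints pull witnessing chains toward the small separated region $W_1$ while $\upc\mathcal{A}$ still reaches up to the singleton $\{w_n\}$, and then marry this with the density of size-$n$ chains supplied by \cref{lem:clopen CCn contain chains size n}; note that $n\ge 2$ enters exactly in requiring $W_1$ and $W_n$ to be disjoint (consistently, for $n=1$ the free $\GA_1$-algebra is boolean and thus bi-Heyting).
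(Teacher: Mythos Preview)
Your proof is correct and follows essentially the same strategy as the paper: reduce to showing $\CC_n(\two^S)$ is not co-Esakia, exhibit a clopen whose $\lec$-upset cannot be clopen, and derive the contradiction by combining the $\lec$-minimality of size-$n$ chains with \cref{lem:clopen CCn contain chains size n}. The only difference is cosmetic: the paper uses the simpler clopen $\mathcal{V}=(\Diamond D_s)\cap\CC_n(\two^S)$ for a single $s\in S$ and works with $\upc\mathcal{V}\setminus\mathcal{V}$ directly, whereas you build a more elaborate $\mathcal{A}$ from $n$ disjoint clopen neighborhoods and intersect $\upc\mathcal{A}$ with $\Box W_n$; both routes land on the same contradiction via the same key lemma.
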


\begin{proof}
By \cref{thm:free GAn over set,prop:Esakia duality for cobiHeyting}, it is enough to show that $\CC_n(\two^S)$ is not a bi-Esakia space whenever $S$ is an infinite set. So, we need to exhibit a clopen subset $\mathcal{V}$ of $\CC_n(\two^S)$ such that $\upc \mathcal{V}$ is not clopen in $\CC_n(\two^S)$. Note that  $\upc \mathcal{V} \subseteq \CC_n(\two^S)$ for every $\mathcal{V} \subseteq \CC_n(\two^S)$ because $\CC_n(\two^S)$ is an upset of $\CC(\two^S)$.

Fix $s \in S$ and let $\mathcal{V} = (\Diamond D_s) \cap \CC_n(\two^S)$. Assume, with a view to contradiction, that $\upc \mathcal{V}$ is clopen in $\CC_n(\two^S)$. Define two elements $\overline{a} = (a_i)_{i \in S}$ and $\overline{b} = (b_i)_{i \in S}$ of $\two^S$ by setting $a_i=0$ for every $i \in S$ and $b_i=1$ for $i = s$ and $b_i=0$, otherwise. Then $\overline{a} \in D_s$, $\overline{b} \notin D_s$, and $\overline{a} < \overline{b}$. Thus, $\{ \overline{a}, \overline{b}\} \in \mathcal{V}$, and hence $\{\overline{b}\}$ is a closed chain in $\two^S$ that belongs to $\upc \mathcal{V} \setminus \mathcal{V}$. It follows that the subset $\upc \mathcal{V} \setminus \mathcal{V}$ of $\CC_n(\two^S)$ is nonempty and it is clopen by our assumption. By \cref{lem:clopen CCn contain chains size n}, $\upc \mathcal{V} \setminus \mathcal{V}$ must contain a chain $C$ of size $n$. Because $C \in \upc \mathcal{V}$, there is $K \in \mathcal{V}$ such that $K \lec C$. Since $C$ has size $n$ and $K \in \CC_n(\two^S)$, we obtain that $C=K$ because $K \lec C$. Then $C=K \in \mathcal{V}$, but this contradicts that $C \in \upc \mathcal{V} \setminus \mathcal{V}$. Therefore, $\upc \mathcal{V}$ is not clopen. This shows that $\CC_n(\two^S)$ is not a bi-Esakia space.
\end{proof}

\section{Comparison with the step-by-step method}\label{sec:comparison stepbystep}

In this section we compare the dual description of free G\"odel algebras obtained in \cref{sec:free Godel} with the one resulting from the step-by-step method. The step-by-step method was introduced in \cite{Ghi92} to study Heyting algebras free over finite distributive lattices and has been extended in \cite{Alm24} to Heyting algebras free over distributive lattices of any cardinality.
We briefly recall the description of free G\"odel algebras obtained in \cite[Sec.~6.3]{Alm24} utilizing the step-by-step approach.

Let $X$ be a Priestley space. Topologize the set of nonempty closed chains $\CC(X)$ of $X$ with the Vietoris topology as we did in \cref{sec:free Godel}. However, instead of equipping $\CC(X)$ with the partial order $\lec$, equip it with the reverse inclusion order $\supseteq$. It follows from \cite[Sec.~6.3]{Alm24} that $(\CC(X), \supseteq)$ is a Priestley space. Observe that $(\CC(X), \supseteq)$ is not a root system in general. To simplify notation, in what follows we denote the Priestley space $(\CC(X), \supseteq)$ by $Y$.
To describe how to obtain an Esakia root system from $Y$, we need to introduce the notion of $m$-open element of $\CC(Y)$. Note that we are now considering closed chains in $Y$, whose elements are themselves closed chains in $X$.

\begin{definition}
Let $m \colon Y \to X$ be the map that sends a nonempty closed chain of $X$ to its least element. We say that $\mathcal{C} \in \CC(Y)$ is \emph{$m$-open} provided that for every $C_1 \in \mathcal{C}$ and $C_2 \in Y$ with $C_1 \supseteq C_2$, there is $C_3 \in \mathcal{C}$ such that $C_1 \supseteq C_3$ and  $m(C_2)=m(C_3)$. 
\end{definition}

Let $Z = \{ \mathcal{C} \in \CC(Y) \mid \mathcal{C} \text{ is $m$-open}\}$ and equip $Z$ with the subspace topology induced by the Vietoris topology on $\CC(Y)$.  The following theorem, which is a consequence of  \cite[Thms.~6.11 and~6.15]{Alm24}, provides an alternative dual description of the free G\"odel algebra over a distributive lattice.

\begin{theorem}
The ordered space $(Z,\supseteq)$ is an Esakia root system and $(Z,\supseteq)^*$ is a G\"odel algebra free over the distributive lattice $X^*$.
\end{theorem}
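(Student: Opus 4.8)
The plan is to deduce the statement from the step-by-step machinery of \cite{Alm24} rather than to rebuild the construction from scratch, the relevant output being precisely \cite[Thms.~34 and~38]{Alm24}. First I would record what each contributes. The topological and order-theoretic half is that $Y=(\CC(X),\supseteq)$ is a Priestley space (already \cite[Prop.~32]{Alm24}), that the $m$-open elements form a \emph{closed} subset $Z$ of $\CC(Y)$, and that $(Z,\supseteq)$ is an Esakia space; moreover the $m$-openness requirement forces $\up\mathcal{C}$ to be linearly ordered by $\supseteq$ for each $\mathcal{C}\in Z$, so that $(Z,\supseteq)$ is a root system and hence an Esakia root system. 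The algebraic half is that the composite $Z\to Y\to X$, whose first arrow sends a closed chain of $Y$ to its least element and whose second arrow is $m$, is a continuous order-preserving map, and that the induced $\DL$-morphism $X^*\to(Z,\supseteq)^*$ exhibits $(Z,\supseteq)^*$ as the G\"odel algebra free over $X^*$, i.e., it satisfies the universal property dual to the one defining free G\"odel algebras over a distributive lattice. Putting the two halves together proves the theorem.

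The point demanding care is that \cite[Thm.~38]{Alm24} is stated and proved only for finite posets; as flagged in the footnote, the argument nevertheless goes through for an arbitrary Priestley space $X$, and making this precise is the step I expect to be the main obstacle. Concretely, I would go through that proof and check that every step invokes only facts that survive in the topological setting: that $\up x$ and $\down x$ are closed in a Priestley space, that the Vietoris space of a Stone space is again a Stone space, that $m$ and the least-element map on $\CC(Y)$ are continuous, and that $Z$ is closed in $\CC(Y)$. These are exactly the ingredients already exploited throughout \cref{sec:free Godel}, so no genuinely new idea is required — only the verification that the proof in the finite case carries over.

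Finally, I would note that the theorem can alternatively be obtained without appealing to \cite{Alm24}: one shows directly that $(Z,\supseteq)$ is isomorphic, as an Esakia root system, to $(\CC(X),\lec)$ from \cref{sec:free Godel}, and then invokes \cref{thm:free Godel over L}. The natural ingredients of such an isomorphism are the least-element map $m$ and its iterate on $\CC(Y)$, and the $m$-openness condition should be exactly what makes the resulting correspondence well defined and bijective; since this identification belongs more properly to the comparison pursued in the remainder of the section, I would present the theorem itself via the short argument through the citations above.
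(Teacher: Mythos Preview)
Your proposal is correct and matches the paper's approach exactly: the paper does not give a proof but simply records the theorem as a consequence of \cite[Thms.~34 and~38]{Alm24}, with a footnote noting that although \cite[Thm.~38]{Alm24} is stated only for finite posets, it is mentioned in \cite{Alm24} that it holds for arbitrary Priestley spaces. Your additional remarks about the alternative route via an isomorphism with $(\CC(X),\lec)$ anticipate precisely what the paper does in the subsequent \cref{thm:CCX iso to Z}.
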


Since both $(\CC(X), \lec)$ and $(Z, \supseteq)$ are dual to G\"odel algebras free over $X^*$ and free algebras are unique up to isomorphism, it follows that the two Esakia root systems must be isomorphic. We sketch a direct proof of the existence of this isomorphism.

\begin{theorem}\label{thm:CCX iso to Z}
The Esakia root systems $(\CC(X), \lec)$ and $(Z, \supseteq)$ are isomorphic.
\end{theorem}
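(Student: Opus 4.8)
The plan is to exhibit mutually inverse maps $\Phi\colon Z\to(\CC(X),\lec)$ and $\Psi\colon(\CC(X),\lec)\to Z$ and to show that $\Phi$ is a continuous order isomorphism. Recall that an element of $Z$ is a nonempty, closed, $m$-open chain $\mathcal{C}$ of $Y=(\CC(X),\supseteq)$ (whose members are themselves closed chains of $X$), and that $m\colon Y\to X$ sends a chain to its least element; it is continuous and, for the reverse-inclusion order, order preserving (if $C_1\supseteq C_2$ then $\min C_1\le\min C_2$). I would set $\Phi(\mathcal{C})\coloneqq\bigcup\mathcal{C}$, the union of all members of $\mathcal{C}$, and $\Psi(C)\coloneqq\upc C$, the principal $\lec$-upset of $C$ in $(\CC(X),\lec)$, which equals $\{\up x\cap C\mid x\in C\}$.

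The first step is well-definedness. The point for $\Phi$ is that $m$-openness forces $\bigcup\mathcal{C}=m[\mathcal{C}]$: the inclusion $\supseteq$ holds since $m(K)=\min K\in K$ for every $K\in\mathcal{C}$, and for any $x$ lying in a member $C_1\in\mathcal{C}$, applying the defining property of $m$-openness to $C_1$ and the singleton chain $\{x\}\subseteq C_1$ yields $C_3\in\mathcal{C}$ with $\min C_3=x$, so $x\in m[\mathcal{C}]$. As $m$ is continuous and order preserving and $\mathcal{C}$ is a closed, hence compact, chain of $Y$, the set $\Phi(\mathcal{C})=m[\mathcal{C}]$ is a nonempty closed chain of $X$, so $\Phi(\mathcal{C})\in\CC(X)$. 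For $\Psi$: the set $\upc C$ is closed in $\CC(X)$ because it is the $\lec$-upset of a point in the Priestley space $(\CC(X),\lec)$ and $\up$ of a point is closed in every Priestley space (see, e.g., \cite[Prop.~2.6(ii)]{Pri84}); its members are pairwise $\subseteq$-comparable since $C$ is a chain, so $\upc C$ is a nonempty closed chain of $Y$; and it is $m$-open, because given $C_1=\up a\cap C\in\upc C$ and any $C_2\in Y$ with $C_2\subseteq C_1$, the element $C_3\coloneqq\up(\min C_2)\cap C$ lies in $\upc C$, is contained in $C_1$, and satisfies $m(C_3)=\min C_2=m(C_2)$. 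Hence $\Psi(C)\in Z$.

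Next I would verify that $\Phi$ and $\Psi$ are mutually inverse. The equality $\Phi(\Psi(C))=\bigcup_{x\in C}(\up x\cap C)=C$ is immediate. For $\Psi(\Phi(\mathcal{C}))=\mathcal{C}$, write $C\coloneqq\bigcup\mathcal{C}$; one checks that every $K\in\mathcal{C}$ equals $\up(\min K)\cap C$, and conversely that, for $x\in C$, the member of $\mathcal{C}$ supplied by $m$-openness with least element $x$ is exactly $\up x\cap C$ — in both directions one uses $m$-openness to locate, inside any member containing a given point, a member whose least element is that point, and then the linearity of $\mathcal{C}$ under $\subseteq$ to compare it with the members at hand (two members of a chain are $\subseteq$-comparable, and then their minima compare the opposite way). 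I expect this to be the main obstacle: it is the place where $m$-openness and the chain structure of $Y$ are genuinely needed, and the case analysis has to be handled carefully.

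Finally I would show $\Phi$ is an isomorphism of Esakia root systems. The map $\Psi$ is order preserving from $(\CC(X),\lec)$ to $(Z,\supseteq)$ because $C_1\lec C_2$ implies $\upc C_1\supseteq\upc C_2$, and $Z$ carries the reverse-inclusion order; and $\Phi$ is order preserving because $\mathcal{C}_1\supseteq\mathcal{C}_2$ gives $\bigcup\mathcal{C}_2\subseteq\bigcup\mathcal{C}_1$ together with $\bigcup\mathcal{C}_2$ an upset of $\bigcup\mathcal{C}_1$ (the latter by one further application of $m$-openness of $\mathcal{C}_1$ and linearity). Thus $\Phi$ is an order isomorphism. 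It is also continuous: for $V$ clopen in $X$, the sets $\Box V=\{K\in\CC(X)\mid K\subseteq V\}$ and $\Diamond V=\{K\in\CC(X)\mid K\cap V\neq\varnothing\}$ are clopen in $\CC(X)=Y$ by \cref{lem:facts CC(X):item3}, and $\Phi^{-1}[\Box V]=Z\cap\{\mathcal{C}\in\CC(Y)\mid K'\subseteq V\text{ for all }K'\in\mathcal{C}\}$ while $\Phi^{-1}[\Diamond V]=Z\cap\{\mathcal{C}\in\CC(Y)\mid K'\cap V\neq\varnothing\text{ for some }K'\in\mathcal{C}\}$, both clopen in $\CC(Y)$ since $\Box V$ and $\Diamond V$ are clopen in $Y$ and the Vietoris box and diamond of a clopen set are clopen; as $\{\Box V,\Diamond V\mid V\text{ clopen in }X\}$ is a subbasis for $\CC(X)$ by \cref{lem:facts CC(X):item4}, $\Phi$ is continuous. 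Both $Z$ and $\CC(X)$ are Stone spaces (the former by the result of \cite{Alm24} quoted above, the latter by \cref{thm:CC(X) Stone}), so a continuous bijection between them is a homeomorphism; an order isomorphism of Esakia root systems that is a homeomorphism is an $\ERS$-isomorphism, and therefore $(\CC(X),\lec)$ and $(Z,\supseteq)$ are isomorphic.
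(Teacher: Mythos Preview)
Your proposal is correct and follows exactly the approach sketched in the paper: both use the map $C\mapsto\upc C$ from $(\CC(X),\lec)$ to $Z$ as the isomorphism, with your $\Phi(\mathcal{C})=\bigcup\mathcal{C}$ serving as its inverse. You supply the details the paper deliberately omits (well-definedness via $m$-openness, the bijectivity check, and continuity on the subbasis), but the underlying construction is identical.
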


\begin{proof}[Sketch of the proof.]
Recall that the elements of $Z$ are the $m$-open nonempty closed chains of $Y=(\CC(X), \supseteq)$. It can be shown that if $C \in \CC(X)$, then $\upc C$ is an element of $Z$, and that any element of $Z$ is $\upc C$ for some $C \in \CC(X)$. 
Sending each $C \in \CC(X)$ to $\upc C \in Z$ defines a bijection between $\CC(X)$ and $Z$. It turns out that this map is an isomorphism of Esakia root systems. 
\end{proof}

Proving the missing steps in the sketch of the proof of \cref{thm:CCX iso to Z} requires a nontrivial effort since the definition of $Z$ is quite involved: $Z$ is equipped with the Vietoris topology induced by the topology on $\CC(X)$, which in turn is the Vietoris topology induced by $X$.
It is for this reason that, instead of deriving \cref{thm:free Godel over L} from the results of \cite{Alm24} and \cref{thm:CCX iso to Z}, we opted to provide a more direct and independent proof in \cref{sec:free Godel}.

We end this final section by turning our attention to the Priestley space $(\CC(X), \supseteq)$ that played a fundamental role in the step-by-step approach. When $X$ is a finite poset, the order dual of $(\CC(X), \supseteq)$ is the nerve of $X$, which has applications in polyhedral geometry (see, e.g., \cite[p.~388]{BMMP18} and the references therein). \cref{fig:twotwo and CCtwotwo} depicts the poset $\two \times \two$, where $\two$ is the $2$-element chain, and the two partial orders $\lec$ and $\supseteq$ on $\CC(\two \times \two)$. The solid lines denote the partial order $\lec$ and the dotted lines show the relations that need to be added to $\lec$ to obtain $\supseteq$. Note that \cref{thm:free Godel over S} yields that $(\CC(\two \times \two), \lec)$ is the Esakia root system dual to the G\"odel algebra free over $2$ generators. 

\begin{figure}[!h]
\begin{tikzpicture}[scale=0.8]
	\begin{pgfonlayer}{nodelayer}
		\node [style=black dot] (0) at (-3, 1.75) {};
		\node [style=black dot] (1) at (-3, 0) {};
		\node [style=black dot] (2) at (0, 1.75) {};
		\node [style=black dot] (3) at (1.5, 1.75) {};
		\node [style=black dot] (4) at (3, 1.75) {};
		\node [style=black dot] (5) at (-1.5, 0) {};
		\node [style=black dot] (6) at (0, 0) {};
		\node [style=black dot] (7) at (1.5, 0) {};
		\node [style=black dot] (8) at (3, 0) {};
		\node [style=black dot] (9) at (-1.5, -1.75) {};
		\node [style=black dot] (10) at (1.5, -1.75) {};
		\node [style=black dot] (11) at (-8.5, 1.75) {};
		\node [style=black dot] (12) at (-6.75, 0) {};
		\node [style=black dot] (13) at (-10.25, 0) {};
		\node [style=black dot] (14) at (-8.5, -1.75) {};
		\node [style=none] (15) at (-8.5, -2.625) {};
		\node [style=none] (16) at (0, -2.625) {};
		\node [style=none] (17) at (-8.5, -2.625) {$\two \times \two$};
		\node [style=none] (18) at (0, -2.625) {$\CC(\two \times \two)$};
	\end{pgfonlayer}
	\begin{pgfonlayer}{edgelayer}
		\draw (2) to (5);
		\draw (2) to (7);
		\draw (2) to (6);
		\draw (5) to (9);
		\draw (7) to (10);
		\draw (4) to (8);
		\draw (0) to (1);
		\draw [style=dotted] (10) to (8);
		\draw [style=dotted] (7) to (4);
		\draw [style=dotted] (5) to (0);
		\draw [style=dotted] (9) to (1);
		\draw [style=dotted] (6) to (3);
		\draw [style=dotted] (3) to (8);
		\draw [style=dotted] (10) to (6);
		\draw [style=dotted] (6) to (9);
		\draw [style=dotted] (1) to (3);
		\draw (11) to (12);
		\draw (12) to (14);
		\draw (14) to (13);
		\draw (13) to (11);
	\end{pgfonlayer}
\end{tikzpicture}
\caption{The poset $\two \times \two$ and the set $\CC(\two \times \two)$ with the partial orders $\lec$ and $\supseteq$.}\label{fig:twotwo and CCtwotwo}
\end{figure}

Let $G$ be the G\"odel algebra free over $L=X^*$ via $e \colon L \to G$. Then $G \cong (\CC(X), \lec)^*$ by \cref{thm:free Godel over L}.
Since $\supseteq$ extends $\lec$, the identity map $\id_{\CC(X)} \colon (\CC(X), \lec) \to (\CC(X), \supseteq)$ is a continuous order-preserving map between Priestley spaces. Then $(\CC(X), \supseteq)^*$ embeds into $G$ because onto $\Pries$-morphisms correspond to embeddings in $\DL$ (see, e.g., \cite[Thm.~11.31]{DP02}). 
Let $L' \subseteq G$ be the subalgebra of $G$ that is the image of the embedding of $(\CC(X), \supseteq)^*$ into $G$.
It is a consequence of \cite[Sec.~6.3]{Alm24} that $L'$ is the bounded sublattice of $G$ generated by the subset $\{e(a) \to e(b) \mid a,b \in L\}$.
Intuitively, $L'$ is the result of the first step in the step-by-step construction of $G$ and is obtained by ``freely adding'' to $L$ implications between its elements.\footnote{See \cite[Sec.~3.1]{Alm24} for an intuitive explanation of the ideas behind the step-by-step construction.} 
Since $\id_{\CC(X)} \colon (\CC(X), \lec) \to (\CC(X), \supseteq)$ is a bijection, it follows from Priestley duality that there is a bijection between the sets of prime filters of $G$ and $L'$ that sends a prime filter $P$ of $G$ to the prime filter $P \cap L'$ of $L'$. Note that this correspondence preserves inclusions but does not necessarily reflects them.

We end the section by characterizing the clopen upsets of $(\CC(X), \supseteq)$, which correspond to the elements of the sublattice $L'$ of $G \cong (\CC(X), \lec)^*$, as we observed in the previous paragraph. By \cref{thm:free Godel over L}, the elements of $G$ of the form $e(a)$ with $a \in L$ correspond to the clopen upsets of $(\CC(X), \lec)$ of the form $\Box U$ with $U$ a clopen upset of $X$. We first describe the implications between such elements in $(\CC(X), \lec)^*$.

\begin{proposition}\label{prop:implication box clopup}
If $U_1,U_2$ are clopen upsets of $X$, then $\Box U_1 \to \Box U_2=\Box((X \setminus U_1) \cup U_2)$.
\end{proposition}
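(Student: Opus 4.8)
The plan is to invoke the formula for Heyting implication in an Esakia space recalled in \cref{sec:prelim Priestley Esakia}: for clopen upsets $\mathcal{A},\mathcal{B}$ of $\CC(X)$ one has $\mathcal{A} \to \mathcal{B} = \CC(X) \setminus \downc(\mathcal{A} \setminus \mathcal{B})$. Since $U_1,U_2$ are clopen, $\Box U_1$ and $\Box U_2$ are clopen by \cref{lem:facts CC(X):item3}, and they are upsets of $\CC(X)$ by \cref{lem:Box and Diamond properties:item1}, so the formula applies with $\mathcal{A}=\Box U_1$ and $\mathcal{B}=\Box U_2$. Since the complement of $(X\setminus U_1)\cup U_2$ in $X$ is $U_1\setminus U_2$, \cref{lem:facts CC(X):item2} reduces the claimed equality to
\[
\downc(\Box U_1 \setminus \Box U_2) = \Diamond(U_1 \setminus U_2).
\]

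First I would rewrite $\Box U_1 \setminus \Box U_2$ into a shape handled by \cref{lem:Box and Diamond properties:item6}. Using \cref{lem:facts CC(X):item2} we get $\Box U_1 \setminus \Box U_2 = \Box U_1 \cap \Diamond(X\setminus U_2)$, and then the elementary identity $\Box A \cap \Diamond B = \Box A \cap \Diamond(A\cap B)$ (already used in the proof of \cref{thm:CC in ERS}) gives $\Box U_1 \setminus \Box U_2 = \Box U_1 \cap \Diamond(U_1\setminus U_2)$. Now $U_1\setminus U_2 = U_1 \cap (X\setminus U_2)$ is the intersection of the clopen upset $U_1$ with the clopen downset $X\setminus U_2$, and $U_1\setminus U_2 \subseteq U_1$; hence \cref{lem:Box and Diamond properties:item6}, applied with $A = U_1$, $U = U_1$, $D = X\setminus U_2$, yields
\[
\downc\big(\Box U_1 \cap \Diamond(U_1\setminus U_2)\big) = \Box\big(U_1\cup(X\setminus U_2)\big) \cap \Diamond(U_1\setminus U_2).
\]

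It then remains to see that the $\Box$-factor on the right is redundant, i.e.\ that $\Diamond(U_1\setminus U_2) \subseteq \Box\big(U_1\cup(X\setminus U_2)\big)$; this is the only step requiring an actual argument rather than bookkeeping. Given $C\in\Diamond(U_1\setminus U_2)$, fix $x\in C$ with $x\in U_1\setminus U_2$ and let $y\in C\cap U_2$ be arbitrary; since $C$ is a chain, $x\le y$ or $y\le x$, and $y\le x$ would force $x\in U_2$ because $U_2$ is an upset, a contradiction, so $x\le y$, whence $y\in U_1$ because $U_1$ is an upset. Thus $C\cap U_2\subseteq U_1$, that is, $C\subseteq U_1\cup(X\setminus U_2)$, giving the desired containment. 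Combining the two displays with the implication formula and the complement identity yields $\Box U_1 \to \Box U_2 = \CC(X)\setminus\Diamond(U_1\setminus U_2) = \Box\big((X\setminus U_1)\cup U_2\big)$. I do not expect any genuine obstacle: the argument is a short chain of applications of \cref{lem:facts CC(X),lem:Box and Diamond properties}, and the only care needed is to verify the upset/downset hypotheses when invoking \cref{lem:Box and Diamond properties:item6}.
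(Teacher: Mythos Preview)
Your proof is correct. The overall structure matches the paper's: both reduce via the Esakia implication formula and \cref{lem:facts CC(X):item2} to the identity $\downc(\Box U_1 \setminus \Box U_2) = \Diamond(U_1\setminus U_2)$. The difference is in how that identity is established. The paper argues directly: $C \in \downc(\Box U_1 \setminus \Box U_2)$ means there is $x\in C$ with $C\cap\up x\subseteq U_1$ and $C\cap\up x\nsubseteq U_2$, and since $U_1,U_2$ are upsets this is equivalent to $x\in C\cap(U_1\setminus U_2)$, i.e.\ $C\in\Diamond(U_1\setminus U_2)$. You instead route the computation through \cref{lem:Box and Diamond properties:item6}, which delivers $\Box(U_1\cup(X\setminus U_2))\cap\Diamond(U_1\setminus U_2)$, and then eliminate the $\Box$-factor by a short chain argument exploiting that $U_1,U_2$ are upsets. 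Your approach reuses existing machinery cleanly, at the cost of an extra inclusion step; the paper's direct unpacking is a bit shorter since it handles both inclusions at once. Either way the work is the same in spirit, and your verification of the hypotheses of \cref{lem:Box and Diamond properties:item6} is correct.
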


\begin{proof}
Since $(\CC(X), \lec)$ is an Esakia space, the implication in $(\CC(X), \lec)^*$ is given by
\[
\Box U_1 \to \Box U_2 = \CC(X) \setminus \downc (\Box U_1 \setminus \Box U_2). 
\]
We first show that $\downc (\Box U_1 \setminus \Box U_2) = \Diamond (U_1 \setminus U_2)$. Let $C \in \CC(X)$. Then $C \in \downc (\Box U_1 \setminus \Box U_2)$ iff there is $K \in \CC(X)$ such that $C \lec K$, $K \in \Box U_1$, and $K \notin \Box U_2$. The existence of such a $K$ is equivalent to the existence of $x \in C$ such that $C \cap \up x \subseteq U_1$ and $C \cap \up x \nsubseteq U_2$. Since $U_1$ and $U_2$ are upsets, it follows that $C \in \downc (\Box U_1 \setminus \Box U_2)$ iff there is $x \in C$ such that $x \in U_1 \setminus U_2$. Thus, $\downc (\Box U_1 \setminus \Box U_2) = \Diamond (U_1 \setminus U_2)$. Then \cref{lem:facts CC(X):item2} implies that $\Box U_1 \to \Box U_2 = \CC(X) \setminus \Diamond (U_1 \setminus U_2) = \Box ((X\setminus U_1) \cup U_2)$.
\end{proof}

We end this last section of the paper with a theorem characterizing the elements of the distributive lattice $(\CC(X), \supseteq)^*$ isomorphic to $L'$.

\begin{theorem}
The clopen upsets of $(\CC(X), \supseteq)$ are the subsets of $\CC(X)$ of the form $\Box V_1 \cup \dots \cup \Box V_n$ with $V_1, \dots, V_n$ clopen in $X$. 
\end{theorem}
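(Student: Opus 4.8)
The plan is to establish the two inclusions separately. For the inclusion that every finite union $\Box V_1 \cup \dots \cup \Box V_n$ with $V_1, \dots, V_n$ clopen in $X$ is a clopen upset of $(\CC(X), \supseteq)$, I would first note that each $\Box V_i$ is clopen in $\CC(X)$ by \cref{lem:facts CC(X):item3}, and that the topology used here is precisely the Vietoris topology underlying $(\CC(X), \supseteq)$. I would then check that each $\Box V_i$ is an upset of $(\CC(X), \supseteq)$: if $C_1 \in \Box V_i$ and $C_1 \le C_2$ in $(\CC(X),\supseteq)$ -- that is, $C_2 \subseteq C_1$ -- then $C_2 \subseteq C_1 \subseteq V_i$, so $C_2 \in \Box V_i$. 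Since a finite union of clopen upsets is again a clopen upset, this direction is complete.

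The substance is the converse: every clopen upset $\mathcal{U}$ of $(\CC(X), \supseteq)$ has this form. The key step is the claim that for each $C \in \mathcal{U}$ there is a clopen subset $V$ of $X$ with $C \subseteq V$ and $\Box V \subseteq \mathcal{U}$. I would prove this by contradiction: if it fails for some $C \in \mathcal{U}$, then $\Box V \setminus \mathcal{U} \neq \varnothing$ for every clopen $V$ with $C \subseteq V$. The family $\mathcal{F} = \{\Box V \setminus \mathcal{U} \mid V \text{ clopen in } X,\ C \subseteq V\}$ then consists of nonempty closed subsets of $\CC(X)$, since $\Box V$ is clopen by \cref{lem:facts CC(X):item3} and $\mathcal{U}$ is open. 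It has the finite intersection property: given clopen $V_1, \dots, V_m$ all containing $C$, the set $V_1 \cap \dots \cap V_m$ is clopen and contains $C$, and \cref{lem:facts CC(X):item1} gives $(\Box V_1 \setminus \mathcal{U}) \cap \dots \cap (\Box V_m \setminus \mathcal{U}) = \Box(V_1 \cap \dots \cap V_m) \setminus \mathcal{U}$, which is nonempty by assumption. Since $\CC(X)$ is compact (it is a Stone space by \cref{thm:CC(X) Stone}), there is $C_0 \in \bigcap \mathcal{F}$; thus $C_0 \notin \mathcal{U}$ and $C_0 \subseteq V$ for every clopen $V$ containing $C$. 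As $X$ is a Stone space and $C$ is closed, $\bigcap\{V \mid V \text{ clopen in } X,\ C \subseteq V\} = C$, so $C_0 \subseteq C$. But then $C \supseteq C_0$, i.e.\ $C \le C_0$ in $(\CC(X), \supseteq)$, and since $\mathcal{U}$ is an upset containing $C$ we get $C_0 \in \mathcal{U}$, contradicting $C_0 \notin \mathcal{U}$. This proves the claim.

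With the claim in hand, for each $C \in \mathcal{U}$ fix a clopen $V_C \subseteq X$ with $C \subseteq V_C$ and $\Box V_C \subseteq \mathcal{U}$; then $C \in \Box V_C \subseteq \mathcal{U}$, so $\mathcal{U} = \bigcup_{C \in \mathcal{U}} \Box V_C$. Since $\mathcal{U}$ is closed in the compact Priestley space $(\CC(X), \supseteq)$, it is compact, and the open cover $\{\Box V_C\}_{C \in \mathcal{U}}$ admits a finite subcover, yielding $\mathcal{U} = \Box V_{C_1} \cup \dots \cup \Box V_{C_n}$ for some $C_1, \dots, C_n \in \mathcal{U}$; the case $\mathcal{U} = \varnothing$ is covered by $\Box \varnothing = \varnothing$.

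I expect the claim to be the only real obstacle; everything else is bookkeeping with the Vietoris topology and compactness. The conceptual point is that an upset of $(\CC(X),\supseteq)$ is exactly a family of closed chains that is closed under passing to nonempty closed subchains, and a clopen such family cannot ``barely miss'' being a union of boxes: any closed chain that sits inside every clopen box $\Box V$ with $C \subseteq V$ must itself be a subchain of $C$, and hence already belongs to $\mathcal{U}$.
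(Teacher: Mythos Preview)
Your proof is correct, but the route differs substantially from the paper's. The paper does not argue directly: it invokes the identification (drawn from \cite{Alm24}) of $(\CC(X),\supseteq)^*$ with the sublattice $L'$ of the free G\"odel algebra generated by the implications $e(a)\to e(b)$, then uses \cref{prop:implication box clopup} to compute these implications as $\Box((X\setminus U_1)\cup U_2)$, and finally observes that $\Box$ commutes with finite intersections to conclude that every element of $L'$ is a finite union of boxes. Your argument bypasses all of this: you work purely with the Vietoris topology, using that each closed set in the Stone space $X$ is the intersection of the clopens containing it, together with compactness of $\CC(X)$, to squeeze a box $\Box V$ between any $C\in\mathcal{U}$ and $\mathcal{U}$. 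Your approach is more elementary and fully self-contained within the paper (no appeal to \cite{Alm24} or the algebraic identification of $L'$ is needed); the paper's approach, on the other hand, makes explicit the algebraic content of the statement --- that the clopen upsets of $(\CC(X),\supseteq)$ are exactly the lattice elements obtained after one ``step'' of freely adding implications --- which is precisely the point of \cref{sec:comparison stepbystep}.
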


\begin{proof}
By what we observed before \cref{prop:implication box clopup}, $(\CC(X), \supseteq)^*$ is the sublattice of $(\CC(X), \lec)^*$ generated by the elements of the form $\Box U_1 \to \Box U_2$ with $U_1,U_2$ clopen upsets of $X$. Thus, \cref{prop:implication box clopup} implies that the elements of $(\CC(X), \supseteq)^*$ are finite unions of finite intersections of elements of the form $\Box((X \setminus U_1) \cup U_2)$, with $U_1$ and $U_2$ clopen upsets of $X$. 
Since $\Box$ commutes with finite intersections by \cref{lem:facts CC(X):item1}, we obtain that any clopen upset of $(\CC(X), \supseteq)$ is a finite union of subsets of the form $\Box V$ with $V$ clopen in $X$. Conversely, it is straightforward to check that $\Box V$ is a clopen upset of $(\CC(X), \supseteq)$ for every clopen subset $V$ of $X$. Therefore, every finite union of subsets of the form $\Box V$ with $V$ clopen is a clopen upset of $(\CC(X), \supseteq)$.
\end{proof}

\subsection*{Acknowledgements}

I would like to thank Guram Bezhanishvili for pointing out some relevant references. I am also grateful to Vincenzo Marra and Tommaso Moraschini for their comments and insightful conversations.

\end{document}